\documentclass[12pt,a4paper]{article}
\usepackage[utf8]{inputenc}
\usepackage[T1]{fontenc}
\usepackage{lmodern} 
\usepackage[english]{babel}
\usepackage{pstricks,pst-all} 
\usepackage{tikz}
\usetikzlibrary{arrows,chains,matrix,positioning,scopes}
\usepackage{amsmath,amsthm}
\usepackage{pifont}
\usepackage{bbm} 
\usepackage{amssymb}
\usepackage{mathabx}
\usepackage{amsfonts}
\usepackage{mathrsfs}

\usepackage[a4paper]{geometry}
\geometry{hscale=0.85,vscale=0.85,centering}
\usepackage{stmaryrd}
\usepackage{dsfont}
\usepackage{graphicx}
\usepackage[active]{srcltx} 
\newcommand{\be}{\begin{enumerate}}
\newcommand{\ee}{\end{enumerate}}
\usepackage{enumerate}

\newcommand{\conv}[2]{\operatorname*{\longrightarrow}_{#1 \rightarrow #2}}
\newcommand{\convw}[2]{\operatorname*{\longrightharpoonup}_{#1 \rightarrow #2}}
\newcommand{\prods}[3]{\left\langle #1, #2\right\rangle_{#3}}
\newcommand{\ds}{\displaystyle}
\newcommand{\R}{\mathbb{R}}

\newcommand{\din}{\dot{\,\in\,}}
\newcommand{\ddin}{\ddot{\,\in\,}}
\newcommand{\D}{\mathscr{D}}
\newcommand{\Dd}{\mathscr{D}_{\textnormal{div}}}
\newcommand{\Pp}{\mathbb{P}}
\newcommand{\Omegah}{\hat{\Omega}}
\newcommand{\gammah}{\hat{\gamma}}

\newcommand{\Ss}{\textnormal{S}}

\newcommand{\Id}{\textnormal{Id}}
\newcommand{\Cc}{\textnormal{C}}

\newcommand{\J}{\textnormal{J}}

\newcommand{\B}{\textnormal{B}}
\newcommand{\dd}{\mathrm{d}}
\newcommand{\N}{\mathbb{N}}

\newcommand{\ffi}{\varphi}
\newcommand{\Ker}{\textnormal{Ker}}

\newcommand{\uu}{\mathbf{u}}
\newcommand{\ww}{\mathbf{w}}
\newcommand{\vv}{\mathbf{v}}

\newcommand{\yy}{\mathbf{y}}
\newcommand{\hh}{\mathbf{h}}
\newcommand{\xx}{\mathbf{x}}
\newcommand{\zz}{\mathbf{z}}

\newcommand{\ppsi}{\boldsymbol{\psi}}
\newcommand{\Pphi}{\boldsymbol{\Phi}}
\newcommand{\fffi}{\boldsymbol{\ffi}}

\newcommand{\nn}{\mathbf{n}}

\newcommand{\Hh}{\textnormal{H}}
\newcommand{\W}{\textnormal{W}}

\renewcommand{\H}{\textnormal{H}}

\newcommand{\Ldiv}{\textnormal{L}_{\div}}

\newcommand{\Ld}{\textnormal{L}}

\newcommand{\Ldivoo}{\textnormal{L}_{\div,0}}

\newcommand{\K}{\textnormal{K}}
\newcommand{\Wl}{\textnormal{W}_\textnormal{loc}}
\renewcommand{\W}{\textnormal{W}}
\newcommand{\Ll}{\textnormal{L}_\textnormal{loc}}
\renewcommand{\P}{\textnormal{P}}
\renewcommand{\div}{\textnormal{div}}

\newcommand{\Ddiv}{\mathscr{D}_{\div}}

\newcommand{\ep}{\varepsilon}

\newtheoremstyle{RemarkStyle}{3mm}{3mm}{\em}{}{\sf}{ :}{\newline}{}
\newtheoremstyle{ClassikStyle}{3mm}{3mm}{\em}{}{\bf}{ :}{2mm}{}
\newtheorem{fact}{Fact}[section]
\def\longrightharpoonup{\relbar\joinrel\rightharpoonup}
\theoremstyle{RemarkStyle}

\theoremstyle{ClassikStyle}
\newtheorem{theorem}{Theorem}
\newtheorem{lemme}{Lemme}[section]
\newtheorem{lem}{Lemma}
\newtheorem{proposition}{Proposition}
\newtheorem{coro}{Corollary}
\newtheorem{rmk}{Remark}[section]

\newtheorem{assumption}{Assumption}

\title{Some variants of the classical Aubin-Lions Lemma}
\date{}

\begin{document}
\bibliographystyle{abbrv}
\maketitle
\centerline{\scshape A. Moussa\footnote{Sorbonne Universités, UPMC Univ Paris 06, UMR 7598, Laboratoire Jacques-Louis Lions, F-75005, Paris, France}\footnote{CNRS, UMR 7598, Laboratoire Jacques-Louis Lions, F-75005, Paris, France}}
\medskip
{\footnotesize
\centerline{E-mail : moussa@ann.jussieu.fr}
\bigskip
\centerline{\today}
\medskip
\abstract{This paper explores two generalizations of the classical Aubin-Lions Lemma. First we give a sufficient condition to commute weak limit and multiplication of two functions. We deduce from this criteria a compactness Theorem for degenerate parabolic equations. Secondly, we state and prove a compactness Theorem for non-cylindrical domains, including the case of dual estimates involving only divergence-free test functions.}

\vspace{4mm}

\textbf{Keywords:} evolution equations ; strong compactness ; Aubin-Lions Lemma

\section{Introduction}
\subsection{Aubin-Lions Lemma and beyond}\label{subsec:beyond}
In the study of nonlinear evolution equations, the Aubin-Lions lemma is a powerful tool allowing to handle the nonlinear terms, when dealing with an approximation process or asymptotic limit. The standard statement gives sufficient conditions on a sequence of functions $(u_n)_n$ of two variables $(t,x)$ (time variable $t$ belongs to some interval $I$, space variable $x$ belongs to some open bounded set $\Omega$) bounded in $\Ld^p(I;B)$ where $B$ is some Banach space of functions defined on $\Omega$. More precisely, if  
\begin{itemize}
\item[(i)] $(u_n)_n$ is bounded in $\Ld^p(I;X)$;
\item[(ii)] $(\partial_t u_n)_n$ is bounded in $\Ld^r(I;Y)$;
\item[(iii)] $X$ embeds compactly in $B$, which in turns embeds continuously in $Y$,
\end{itemize}
then $(u_n)_n$ admits a strongly converging subsequence in $\Ld^p(I;B)$, provided $p<\infty$ or $r>1$. This strong convergence allows then to pass to the limit in the approximation procedure or the asymptotic limit. 

\vspace{2mm}

 The main purpose of this work is to revisit this classical result in order to handle the case of estimates arising from two particular cases : degenerate parabolic equations and incompressible Navier-Stokes equations, the latter being considered in a non-cylindrical domain. These two types of situations do not allow to apply the usual Aubin-Lions directly (we will explain why in the sequel).

\vspace{2mm}

Of course, these equations have already been  well studied in the literature as well as the difficulties arising from their nonlinearities. Hence, the novelty of this work does not concern so much the results for their own sake  (except one or two improvements) but the strategies of proof which, as far as we know, are new. In this way, we hope to give simple arguments that could possibly be reused in different contexts.

\vspace{2mm} 

Before presenting our results, let us describe the existing literature. The naming ``Aubin-Lions'' may be traced back to the seminal papers of Aubin \cite{aub} and Lions \cite{lions}, in the '$60$. However, at the same period, Dubinski\u{\i} proved a general compactness result (see \cite{dub} and the corrected version \cite{barret}) for vector-valued functions which is actually the first nonlinear counterpart of the Aubin's result (the vector space $X$ is replaced by a cone). This is why some authors refers sometimes to the Aubin-Lions-Dubinksi\u{\i} Lemma (see for instance \cite{chen}). Let us mention also the result of Kruzhkov in \cite{kru} which, though far less general than the previouses, present a different approach that we will use in section 4. Simon extended the result of Aubin and Lions to non-reflexive Banach spaces in his highly-cited paper \cite{simon}. In this paper, the condition on the time-derivative was replaced by a more general condition on time translations. The result of Simon  was further sharpened by Amann in \cite{amann} to a refined scale of spaces (including Besov spaces for instance), and broached by Roub{\'{\i}}{\v{c}}ek in a rather general setting, see \cite{roub}. At this stage, we may distinguish three possible directions of generalization (which may overlap)
\begin{itemize}
\item[(a)] \emph{Nonlinear versions of the Aubin-Lions Lemma} \\
 This corresponds to cases in which assumption (i) above is replaced by a nonlinear condition.  For instance, Maitre considers in \cite{maitre} cases in which the space $X$  is replaced by $K(X)$ where $K$ is some compact (nonlinear) mapping $K:X\rightarrow B$. This compactness result was motivated by \cite{simon,amann} and the nonlinear compactness argument Alt and Luckhaus used in \cite{alt}. In some cases, the compactness result obtained in \cite{dub} by  Dubinski\u{\i} may be seen as a consequence of the Theorem of Maitre (see \cite{barret} for more details on that point), see also \cite{chenliu} for general results of the same flavor.
\item[(b)] \emph{Discrete-in-time setting}\\  Quite often, when dealing with approximate solutions of an evolution equation, it is not straightforward to fulfill assumption (ii), since it could happen that $(u_n)_n$  satisfy only a discrete (in time) equation. This typically happens when one replaces the operator $\partial_t$ by some finite-difference approximation. Several papers deal with this issue, (based on the time translations condition of Simon) : see \cite{and,chen,chenliu,dreher,gall}. 
\item[(c)] \emph{Non-cylindrical domain} \\
A time/space domain is called \emph{cylindrical} whenever it may be written $I\times \Omega$ where $\Omega$ is some subset of $\R^d$ and $I$ some intervall of $\R$, see Figure \ref{fig:omega} for an illustration. In the study of PDEs these types of domain are used for evolution problems with a fixed spatial domain. If one wishes to consider the case of \emph{time-dependent} or \emph{moving} spatial domain, one has to consider a family of domains $(\Omega^t)_{t\in I}$, representing the motion of the spatial domain and the corresponding non-cylindrical time/space domain
\begin{align*}
\hat{\Omega} := \bigcup_{t\in I} \{t\} \times \Omega^t.
\end{align*}  
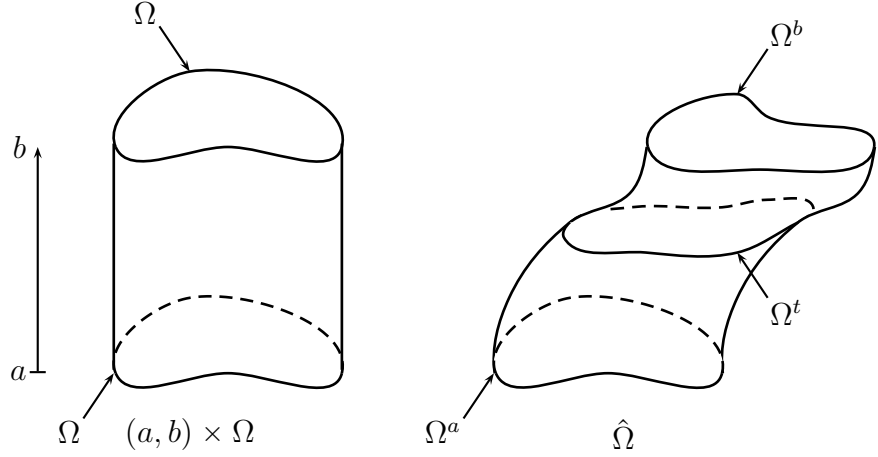
\begin{figure}[h!]
\begin{pspicture}(-2,-1)(10,5)
\rput(3,0){
\psecurve[linewidth=1pt,showpoints=false,fillstyle=solid,fillcolor=white,linestyle=dashed](1.5,0)(0.025,0)(1,1)(3,0)(1.5,0)
\psecurve[linewidth=1pt,showpoints=false,fillstyle=solid,fillcolor=white](1,1)(3,0)(1.5,0)(0.025,0)(1,1)
\rput(0,3){\psecurve[linewidth=1pt,showpoints=false,fillstyle=solid,fillcolor=white](1.5,0)(0.025,0)(1,1)(3,0)(1.5,0)
\psecurve[linewidth=1pt,showpoints=false,fillstyle=solid,fillcolor=white](1,1)(3,0)(1.5,0)(0.025,0)(1,1)}
\psline[linewidth=1pt](0,0.045)(0,3.045)
\psline[linewidth=1pt](3,0.045)(3,3.045)
\psline{->}(0.6,4.6)(1,4)
\uput{0.5mm}[ul](0.6,4.6){$\Omega$}
\psline{->}(-0.4,-0.6)(0,0)
\uput{0.5mm}[dl](-0.4,-0.6){$\Omega$}
\rput(5,0){
\psecurve[linestyle=dashed,linewidth=1pt,showpoints=false,fillstyle=solid,fillcolor=white](0.92,1.8)(1,2)(1.8,2.2)(3,2.2)(3.6,2.3)(4.2,2.2)(4,2)(3,1.6)
\psecurve[linewidth=1pt,showpoints=false,fillstyle=solid,fillcolor=white](0.5,-1)(0.01,0)(0.2,1)(1,2)(1.8,2.4)(2.01,3)(2,4)
\rput(3,0){\psecurve[linewidth=1pt,showpoints=false,fillstyle=solid,fillcolor=white](0.5,-1)(0.01,0)(0.2,1)(1,2)(1.8,2.4)(2.01,3)(2,4)}
\psecurve[linewidth=1pt,showpoints=false,fillstyle=solid,fillcolor=white,linestyle=dashed](1.5,0)(0.025,0)(1,1)(3,0)(1.5,0)
\psecurve[linewidth=1pt,showpoints=false,fillstyle=solid,fillcolor=white](1,1)(3,0)(1.5,0)(0.025,0)(1,1)
\rput(2,3){\psecurve[linewidth=1pt,showpoints=false,fillstyle=solid,fillcolor=white](1.5,0)(0.025,0)(1.2,0.7)(1.6,0.4)(3,0)(1.5,0)
\psecurve[linewidth=1pt,showpoints=false,fillstyle=solid,fillcolor=white](1,1)(3,0)(1.5,-0.3)(0.025,0)(1,1)
\psline{->}(1.6,1.3)(1.2,0.7)
\uput{0.5mm}[ur](1.6,1.3){$\Omega^b$}
}
\psecurve[linewidth=1pt,showpoints=false,fillstyle=solid,fillcolor=white](2,2.2)(1,2)(0.92,1.8)(2,1.6)(3.2,1.6)(4,2)(4.4,2.2)
\psline{->}(3.6,1)(3.2,1.6)
\uput{0.5mm}[dr](3.6,1){$\Omega^t$}
\psline{->}(-0.4,-0.6)(0,0)
\uput{0.5mm}[dl](-0.4,-0.6){$\Omega^a$}
}

\psline{|->}(-1,0)(-1,3)
\uput{1.5mm}[l](-1,0){$a$}
\uput{1.5mm}[l](-1,3){$b$}
\uput{1.5mm}[l](2,-0.8){$(a,b)\times\Omega$}
\uput{1.5mm}[l](7,-0.8){$\hat{\Omega}$}
}

\end{pspicture}
\label{fig:omega}\caption{A cylindrical domain (left) and a non-cylindrical one (right).}
\end{figure}

The study of PDEs in non-cylindrical domains leads to the following difficulty: functions $u:(t,x)\mapsto u(t,x)$ defined on $\hat{\Omega}$ \emph{may not anymore} be seen as  functions of the time-variable $t$ with value in a fixed space of functions of the $x$ variable. Typically, this forbids the assumptions (i), (ii) above and even the conclusion of compactness in $\textnormal{L}^p(I ; \B)$: the very statement of the Aubin-Lions lemma is already problematic. As far as we know, the first proof of a compactness Lemma ``à la Aubin-Lions'' in the case of a non-cylindrical domain appeared in a paper by Fujita and Sauer \cite{fuj}, for the treatment of the incompressible Navier-Stokes equations in moving domain. The method of proof (which was reused in the framework of fluid/structure interaction, see Conca \emph{et. al.} \cite{tucsnak} for instance) uses the idea that, under appropriate regularity conditions, the non-cylindrical domain $\hat{\Omega}$ is close to a finite union of cylindrical domains on which one could then use the usual Aubin-Lions Lemma.
\end{itemize}

The purpose of this study is to prove two generalizations of the Aubin-Lions Lemma. The first one corresponds to cases (a) and (b) above, and the second one to case (c). Both will be proven using totally different methods than the one developped in the above literature. Before stating precisely these two results, we introduce a few notations.

\subsection{Notations}\label{subsec:not}
The norm of a vector space  $X$ will always be denoted $\|\cdot\|_X$, with an exception for the $\textnormal{L}^p$ spaces for which we will often use the notation $\|\cdot\|_p$ if there is no ambiguity. Vectors and vector fields are written in boldface. We omit the exponent for the functionnal spaces constituted of vector fields: we denote for instance  $\textnormal{L}^2(O)$ instead of $\textnormal{L}^2(O)^d$ the set of all vector fields ${O \rightarrow\R^d}$ whose norm is square-integrable.

\vspace{2mm}

When $O$ is some open set of $\R^d$ (or $\R\times\R^d$ ...), we adopt the usual notations for the Sobolev spaces $\W^{1,p}(O)$ and $\H^m(O)$, for $p\in[1,\infty]$ and $m\in\N$. $\mathscr{D}(O)$ denotes the space test functions: smooth functions having a compact support in $O$, while $\mathscr{D}(\overline{O})$ is the restriction to $\overline{O}$ (closure of $O$) of elements in $\mathscr{D}(\R^d)$. $\H^{-m}(O)$  denotes the dual space of $\H^m_{{\footnotesize 0}}(O)$ the latter being the closure of $\mathscr{D}(O)$ under the $\H^m(O)$ norm.  If $O$ has a Lipschitz boundary and $p< d$, we denote by $p^\star$ the exponent of the Sobolev embedding $\W^{1,p}(O) \hookrightarrow \textnormal{L}^{p^\star}(O)$, that is $p^\star := dp/(d-p)$. We adopt the convention $p^\star = \infty$ when $p=d$ (the previous embedding fails in this case). Conjugate coefficient of $p$ is denoted by $p'$. We denote by $\mathscr{M}(O)$ (resp.  $\mathscr{M}(\overline{O})$) the set of finite Radon measures on $O$ (resp. $\overline{O}$) and by $\textnormal{BV}(O)$ the subset of $\Ld^1(O)$ constituted of functions having all their weak derivatives in $\mathscr{M}(O)$. If $I$ is  an intervall and $X$ some Banach (or Fréchet) vector space, we denote by $\Ld^p(I;X)$ the set of all measurable $\Ld^p$  functions from $I$ to $X$ and by $\mathscr{C}^0(I;X)$ the space of continuous functions from $I$ to $X$. When $I$ is closed, $\mathscr{M}(I;\H^{-m}(\R^d))$ is simply the dual space of $\mathscr{C}^0(I;\H^m(\R^d))$.

\vspace{2mm}

We denote by $\Ddiv(O)$ (and similarly $\Ddiv(\overline{O})$) the set of divergence-free test function with support in $O$ and by $\Ldiv^2(O)$ the subspace $\textnormal{L}^2(O)$ vector fields having a vanishing (weak) divergence.  

\vspace{2mm}

If $O\subset\R^d$ has a Lipschitz boundary we can equip $\H^{1/2}(\partial O)$ with the norm
\begin{align*}
 \| g\|_{\H^{1/2}(\partial O)}  = \inf_{v\in \H^1(O), \gamma v=g} \|v\|_{\H^1(O)},
\end{align*}
where $\gamma$ is trace operator on $\H^1(O)$. We denote by $\H^{-1/2}(\partial \Omega)$ the topological dual of $\H^{1/2}(\partial O)$. We recall that in that case (see \cite{gira} for instance), there exists a normal trace operator, that we denote by $\gamma_n$, extending the operator  $\mathscr{C}^0(\overline{O}) \ni \vv \mapsto \vv\cdot \nn \in\mathscr{C}^0(\partial O)$, where $\nn$ is the outward unit normal defined on the boundary $\partial O$, into a linear and onto map $\gamma_n:\Ldiv^2(O)\rightarrow\H^{-1/2}(\partial O)$ satisfying 
\begin{align}
\label{ineq:trace:div}\|\gamma_n \uu\|_{\H^{-1/2}(\partial \Omega)} \leq \|\uu\|_{\textnormal{L}^2(\Omega)}.
\end{align}

$\Ldiv^2(O)$ is a closed subspace of $\textnormal{L}^2(O)$: it is in fact the closure of $\Ddiv(\overline{O})$. It is however important to recall that  $\Ddiv(O)$  is not dense in $\Ldiv^2(O)$, its closure is the subspace $ \Ker \,\gamma_n$, that we will denote $\Ldivoo^2(O)$ in the sequel.

\vspace{2mm}

We adapt the previous notations for solenoidal vector fields to the case of functions depending on both time and space, that is when $O$ is an open set of $\R\times\R^d$ (first component $t$, last components $\xx$). In that case, when there is no ambiguity on the time variable, we perform a slight abuse of notation and denote for instance $\Ddiv(\R\times\R^d)$ the  set of all $\fffi\in\D(\R\times \R^d)$ such as for, for all $t\in\R$, $\fffi(t):\xx\mapsto \fffi(t,\xx) \in\Ddiv(\R^d)$.  $\Ddiv(O)$ is then just the subspace of $\Ddiv(\R\times\R^d)$ having a compact support in $O$, while $\Ldiv^2(O)$ and $\Ldivoo^2(O)$ are respectively the closure  of $\Ddiv(\overline{O})$ and $\Ddiv(O)$ in $\Ld^2(O)$. Notice that in this way we recover the definition we had without the time variable. 

\vspace{2mm}

In all this study the symbols $\dot{\in}$ and $\ddot{\in}$ will respectively mean  ``is bounded in'' and  ``is relatively compact in''. 

\vspace{2mm}

If $A$ is a connected open set of $\R^d$, and $\ep\geq 0$ we define \emph{$\ep$-interior} of $A$ as $A_\ep := \{ x \in A\,:\,\dd(x,A^c) > \ep\}$, while for  $A_{-\ep}$ denote the \emph{$\ep$-exterior} of $A$, that is   $A_{-\ep} := A + \B(0,\ep).$ One checks easily that $({A_{\ep_1})}_{\ep_2}=A_{\ep_1+\ep_2}$ and $A_0=A$. 

\vspace{2mm}

For $\sigma\in\R$ and $\hh\in\R^d$, we denote by $\lambda_\sigma$ and $\tau_{\hh}$ the shift operators in time and space respectively : if $f$ is some function depending on $(t,\xx)$, then $\lambda_\sigma f(t,x) = f(t-\sigma,\xx)$ and $\tau_{\hh} f(t,\xx) = f(t,\xx-\hh)$. 

\subsection{Main results}\label{subsec:main}

The two main results of this paper both generalize the usual Aubin-Lions Lemma in cases in which it may not be applied.

\vspace{2mm}

 The first one deals with the case when compactness on the space variable is not known on the sequence itself, but on some function of it.

\begin{theorem}\label{thm:deg}
  Consider $I\subset\R$ a non-empty closed and bounded interval, and $\Omega\subset\R^d$ a bounded open set with  Lipschitz boundary. Consider also a function $\Phi\in\mathscr{C}^1(\R,\R)$ such as $\{z \in \R\,:\, \Phi'(z)=0\}$ is finite, with $|\Phi'|$ lower bounded by a positive value near $\pm \infty$. If a sequence of $\Wl^{1,1}(I\times\Omega)$ functions satisfies $(a_n)_n\din\textnormal{L}^2(I\times\Omega)$, $(\partial_t a_n)_n \din \mathscr{M}(I;\H^{-m}(\Omega))$ and $(\nabla_{\xx} \Phi(a_n))_n \din\textnormal{L}^2(I\times\Omega)$  then $(a_n)_n\ddin\textnormal{L}^2(I\times\Omega)$.
\end{theorem}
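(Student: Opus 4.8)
The plan is to exploit the two transverse regularities of the sequence — control of $\partial_t a_n$ in the time direction and of $\nabla_{\xx}\Phi(a_n)$ in the space direction — through the commutation criterion (the paper's first main result, allowing one to pass to the limit in a product $f_ng_n$ when one factor carries time regularity and the other space regularity). The only genuinely delicate feature is the finite set of critical points of $\Phi$, which forbids inverting $\Phi$ in any Lipschitz fashion and hence forbids transferring the space compactness of $\Phi(a_n)$ to $a_n$ directly.

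First I would build an auxiliary function $\Theta\in\mathscr{C}^1(\R,\R)$ that is bounded, \emph{strictly} increasing, and satisfies $|\Theta'|\leq C|\Phi'|$ pointwise, for instance $\Theta(z):=\int_0^z|\Phi'(s)|\,w(s)\,\dd s$ with $w(s):=[(1+|\Phi'(s)|)(1+s^2)]^{-1}$. Then $\Theta'=|\Phi'|\,w\geq 0$ vanishes exactly on the finite set $\{\Phi'=0\}$, so $\Theta$ is strictly increasing; $w\leq 1$ forces $\int|\Theta'|<\infty$, so $\Theta$ is bounded; and $|\Theta'/\Phi'|=w$ is bounded. Consequently $\Theta(a_n)\din\Ld^\infty(I\times\Omega)\subset\Ld^2(I\times\Omega)$, while the chain rule together with $|\Theta'(a_n)|\leq C|\Phi'(a_n)|$ gives $|\nabla_{\xx}\Theta(a_n)|\leq C|\nabla_{\xx}\Phi(a_n)|$ (both sides vanishing where $\Phi'(a_n)=0$), so that $\Theta(a_n)\din\Ld^2(I;\H^1(\Omega))$. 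The essential gain is that $\Theta$ remains truly increasing in spite of the degeneracy of $\Phi$.

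Next I extract, up to a subsequence, weak limits $a_n\rightharpoonup a$ and $\Theta(a_n)\rightharpoonup\overline{\Theta}$ in $\Ld^2$. Applying the commutation criterion to the pair $(a_n,\Theta(a_n))$ — with $a_n$ carrying the time regularity $\partial_t a_n\din\mathscr{M}(I;\H^{-m}(\Omega))$ and $\Theta(a_n)$ the space regularity — yields $a_n\,\Theta(a_n)\rightharpoonup a\,\overline{\Theta}$. A Minty-type argument then identifies the limit: testing the monotonicity inequality $(\Theta(a_n)-\Theta(\lambda))(a_n-\lambda)\geq 0$ against nonnegative weights and passing to the limit (the cross term being controlled precisely by the commutation) gives $(\overline{\Theta}-\Theta(\lambda))(a-\lambda)\geq 0$ for every $\lambda$, whence $\overline{\Theta}=\Theta(a)$. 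Feeding this back, $\int a_n\Theta(a_n)\to\int a\,\Theta(a)$, so that $\int(\Theta(a_n)-\Theta(a))(a_n-a)\to 0$; the integrand being nonnegative, it tends to $0$ in $\Ld^1$, and strict monotonicity of $\Theta$ forces $a_n\to a$ almost everywhere.

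Finally I would upgrade this a.e. convergence to strong $\Ld^2$ convergence. Since $\Theta$ is bounded, $\Theta(a_n)\to\Theta(a)$ in $\Ld^2$ by dominated convergence, but for $a_n$ itself one must rule out $\Ld^2$-concentration. Here the lower bound $|\Phi'|\geq c>0$ near $\pm\infty$ enters decisively: it yields $|z|\leq C(1+|\Phi(z)|)$, so the large-amplitude part of $a_n$ is dominated by that of $\Phi(a_n)$, and the bound on $\nabla_{\xx}\Phi(a_n)$ (through Poincaré and the Sobolev embedding, producing extra spatial integrability of $\Phi(a_n)$) prevents mass from escaping to infinity, giving equi-integrability of $(a_n^2)$ and hence $a_n\ddin\Ld^2(I\times\Omega)$. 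I expect this last step — securing an $\Ld^2$-bound on $\Phi(a_n)$ itself, whose spatial average must be controlled (presumably by a truncation of $\Phi$), and converting it into equi-integrability — to be the main technical obstacle, the construction of $\Theta$ and the Minty argument being comparatively routine.
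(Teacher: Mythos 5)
Your first three steps are sound and take a genuinely different route from the paper's: the bounded strictly increasing surrogate $\Theta$, the application of the product criterion to the pair $(\Theta(a_n),a_n)$ (with $\Theta(a_n)$ in the ``space-regular'' slot and $a_n$ in the ``time-regular'' slot, which is admissible since here $p=2$ and $\alpha'=2$), the Minty identification $\overline{\Theta}=\Theta(a)$ and the resulting almost-everywhere convergence of $a_n$ all go through. The gap is in your last paragraph, and it is not a technicality: the hypotheses do \emph{not} yield equi-integrability of $(a_n^2)_n$ along the route you sketch, because of the time variable. From $(\nabla_{\xx}\Phi(a_n))_n\din\Ld^2(I\times\Omega)$ and Poincar\'e--Wirtinger you control $\Phi(a_n)(t)$ in $\Ld^{2^\star}(\Omega)$ only up to its spatial mean, and the only handle the hypotheses give on that mean is a comparison with $a_n(t)$ on a set of substantial measure (Chebyshev applied to $\|a_n(t)\|_{\Ld^2(\Omega)}$). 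For a superlinear $\Phi$, say $\Phi(z)=z^3$, this costs a factor of order $\sup\{|\Phi(z)|:|z|\lesssim\|a_n(t)\|_{\Ld^2(\Omega)}\}\sim\|a_n(t)\|_{\Ld^2(\Omega)}^{3}$, which is merely bounded in $\Ld^{2/3}(I)$ since $t\mapsto\|a_n(t)\|_{\Ld^2(\Omega)}^2$ is only bounded in $\Ld^{1}(I)$. You thus get $\Phi(a_n)$, hence $|a_n|\leq C(1+|\Phi(a_n)|)$, bounded in a space of the type $\Ld^{2/3}(I;\Ld^{2^\star}(\Omega))$, and no interpolation of such a space with $\Ld^2(I\times\Omega)$ produces a time exponent strictly larger than $2$; uniform integrability of $a_n^2$ on $I\times\Omega$ does not follow. (A posteriori $a_n^2$ is of course equi-integrable, since the theorem is true, but your argument does not establish it.)

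The fix is precisely what the paper does, and it renders the Minty step unnecessary: instead of a bounded strictly increasing surrogate, take a surrogate $\beta_\ep$ that is \emph{uniformly $\ep$-close to the identity} ($\beta_\ep=\Id$ away from the critical points, $\beta_\ep(z)=z_i+\Phi(z)-\Phi(z_i)$ near each $z_i$, so that $\nabla_{\xx}\beta_\ep(a_n)$ is still controlled by $\nabla_{\xx}\Phi(a_n)$). Then $a_n\beta_\ep(a_n)$ differs from $a_n^2$ by $O(\ep)$ in $\Ld^1$, the product proposition applied with the constant test function gives $\int a_n\beta_\ep(a_n)\rightarrow\int a\,a^\ep$, weak lower semicontinuity gives $\|a-a^\ep\|_{2}\leq C\ep$, and hence $\limsup_n\|a_n\|_2^2\leq\|a\|_2^2+C\ep$ for every $\ep$, which combined with weak convergence yields strong $\Ld^2$ convergence directly --- no a.e.\ convergence, no identification of nonlinear weak limits, and no equi-integrability argument is needed. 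If you wish to keep your $\Theta$-and-Minty structure you must at least replace your final paragraph by this $\ep$-close-to-identity truncation; at that point the $\Theta$ construction becomes redundant.
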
 
\begin{rmk}
This result comes under cases (a) and (b) following the description of subsection \ref{subsec:beyond} : the compactness assumption for the space variable is not known for the sequence $(a_n)_n$, but for some nonlinear function of it ; also note that the assumption on the time derivative allows Dirac masses in time. In particular, this Theorem applies for step functions in time with values in $\H^{-m}(\Omega)$.
\end{rmk}
Let us first explain why there is a real loss of information with respect to  the compactness in the space variable $x$ for $(a_n)_n$ in comparison with the usual Aubin-Lions Lemma. On each points where $\Phi'$ vanishes we may write
\begin{align*}
\nabla_{\xx} a_n = \frac{1}{\Phi'(a_n)} \nabla_{\xx} \Phi(a_n),
\end{align*}
whenever $a_n$ does not meet the set of critical points and expect an estimate for $(\nabla_{\xx} a_n)_n$, but when $a_n$ approaches a critical point, the estimate degenerates : the usual Aubin-Lions Lemma may not be invoked since no estimates for the gradient of $(a_n)_n$ may be obtained generally. 

\vspace{2mm}

The assumptions of Theorem \ref{thm:deg} are directly linked with the estimates of the equation of porous medium $\partial_t u-\Delta_{\xx} u^m =0$ (in the case of fast diffusion $m>1$)  and more generally may be useful to parabolic degenerate equations of the following form: 
\begin{align}
\label{eq:dege}\partial_t u -\div_{\xx} [ A \nabla_{\xx} \Phi(u) ] = 0,
\end{align}
where $A(t,\xx)$ satisfies some uniform coercivity condition. Indeed if, for instance, $\text{Spec}(A+{}^{t}\!A)$ is lower bounded by $\lambda>0$ uniformly in $(t,\xx)$, then, if $\Psi$ is a anti-derivative of $\Phi$, one gets easily
\begin{align*}
\int_{\Omega} \Psi(u)(s)  + \int_0^s \int_{\Omega} {}^{t}\nabla_{\xx} \Phi(\uu) A  \nabla_{\xx} \Phi(u) = \int_{\Omega} \Psi(u)(0),
\end{align*} 
whence 
\begin{align*}
\int_{\Omega} \Psi(u)(s)  +  \lambda\int_0^s \int_{\Omega} |\nabla_{\xx} \Phi(u)|^2 \leq \int_{\Omega} \Psi(u)(0),
\end{align*} 
which, under appropriate growth condition on $\Phi$ will lead to the assumptions used in Theorem \ref{thm:deg}. Typically, for the porous medium case $\Phi(x) = x^m$ with $m>1$, the previous estimate gives directly the $\textnormal{L}^2$ estimate on the gradient, and that  $u$ belongs to $\textnormal{L}^\infty(I;\textnormal{L}^{m+1}(\Omega))$ whence both the $\textnormal{L}^2$ estimate for $u$ and the $\textnormal{L}^1(I;\H^{-m}(\Omega))$ estimate for its time-derivative. 
\vspace{2mm} 

To prove Theorem \ref{thm:deg}, we will first give a general criteria to pass to the limit in a product $(a_n b_n)_n$ under assumption of weak convergence for both $a_n$ and $b_n$. This criteria seems reminiscent of the celebrated compensated compactness phenomenon exhibited by Murat and Tartar in \cite{murat,tartar} (see also \cite{ger}). However, it is not (as far as we know) a consequence of the compensated compactness theory, but share this common feature : nonlinearities are handled without insuring strong convergence for one of the sequences a compactness.  As far as we know, the strategy used in the literature to treat degenerate parabolic equations like \eqref{eq:dege}  is different of the one we followed, and often relies on the equation's structure, see \cite{vaz} and \cite{dia} for instance. Our proof is also different of the quite general approach proposed by Maitre in \cite{maitre}. A benefit of our method is that it directly applies to both step-functions (in time) and continuous functions. The result may hence be used to prove the strong compactness of a sequence defined by a semi-implicit scheme :
\begin{align*}
\frac{1}{\delta}(u_{n+1}-u_n) - \textnormal{div}_{\xx}[A \nabla_{\xx} \Phi(u_{n+1})] =0.
\end{align*}
Indeed, if $\delta=1/N$ and $\inf I = t_0<\cdots<t_N=\sup I$ is a regular discretization of the interval $I$, one defines 
\begin{align*}
\tilde{u}_N(t,x) := \sum_{k=0}^{N-1} u_k(x) \mathds{1}_{(t_k,t_{k+1})}(t),
\end{align*} 
and applies then Theorem \ref{thm:deg} to obtain the strong compactness of $(\tilde{u}_N)_N$. In fact Theorem \ref{thm:deg} is already used by the author and some collaborators in the proof of global weak solutions for a reaction/cross-diffusion system, approximated by a similar scheme, see \cite{cross}.

\vspace{2mm}

The second main result of this paper comes under case (c) following the description of subsection \ref{subsec:beyond}. We consider a family $(\Omega^t)_{t\in  [a,b]}$ given by the motion of a Lipschitz, connected and bounded reference domain $\Omega\subset\R^d$: $\forall t\in [a,b]$, $\Omega^t:= \mathcal{A}_t(\Omega)$, where for all $t$, $\mathcal{A}_t:\R^d\rightarrow\R^d$ is a  $\mathscr{C}^1$-diffeomorphism. The regularity of the motion is described through the function $\Theta:(t,\xx)\mapsto \mathcal{A}_t(\xx)$, for which we assume the following
\begin{assumption}\label{ass:A1}
The function $\Theta$ belongs to $\mathscr{C}^0([a,b];\mathscr{C}^1(\R^d))$.
\end{assumption}
We will need this Assumption for every result of section \ref{sec:mov} dealing with the family $(\Omega^t)_{t\in I}$ (including Theorem \ref{thm:aubin} below) exception made for Theorem \ref{thm:aubin:gen} (see Remark \ref{rmk:ass}).

\vspace{2mm}

We work on the non-cylindrical domain 
\begin{align*}
\Omegah := \bigcup_{a < t< b} \{t\}\times\Omega^t,
\end{align*}
for which we have the following compactness result 
\begin{theorem}\label{thm:aubin} Consider a sequence $(\uu_n)_n\in\Ldiv^2(\R\times\R^d)$ vanishing outside $\Omegah$.  Assume that  $(\uu_n)_n, (\nabla_\xx \uu_n)_n \dot{\,\in\,} \textnormal{L}^2(\Omegah)$, and $(\uu_n)_n\din\textnormal{L}^\infty(\R;\textnormal{L}^2(\R^d))$. Eventually, assume the existence of a constant $\Cc>0$ and an integer $N>0$ such as for all divergence-free test function $\ppsi \in\Dd(\hat{\Omega})$,
\begin{align}
\label{ineq:lem}\left|\langle  \partial_{t} \uu_n, \ppsi\rangle \right| \leq \Cc \sum_{|\alpha|\leq N} \|\partial_{\xx}^\alpha\ppsi\|_{\textnormal{L}^2(\hat{\Omega})}.
\end{align}
Then $(\uu_n)_n\ddot{\,\in\,}\textnormal{L}^2(\hat{\Omega})$.
\end{theorem}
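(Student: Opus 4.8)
=== BEGIN LATEX ===
\textbf{Proof plan.}
The strategy is to reduce the non-cylindrical compactness to the cylindrical Aubin--Lions setting by \emph{flattening} the moving domain via the diffeomorphism $\Theta$. The plan is to change variables through $\mathcal{A}_t$ so that each $\Omega^t$ becomes the fixed reference domain $\Omega$, producing from $(\uu_n)_n$ a new sequence $(\vv_n)_n$ of functions on the cylinder $[a,b]\times\Omega$ for which one can genuinely speak of ``time with values in a fixed space''. Since strong $\textnormal{L}^2$ compactness is preserved under a bi-Lipschitz-in-space, continuous-in-time change of variables (the Jacobian $\det\nabla_\xx\Theta(t,\cdot)$ is bounded above and below uniformly in $t$ by Assumption \ref{ass:A1} and compactness of $[a,b]$), it suffices to prove $(\vv_n)_n\ddin\textnormal{L}^2([a,b]\times\Omega)$ and transport the conclusion back.

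First I would record the bounds satisfied by $\vv_n:=\uu_n(t,\mathcal{A}_t(\cdot))$. The spatial $\textnormal{L}^2$ bound and the gradient bound $(\nabla_\xx\uu_n)_n\din\textnormal{L}^2(\Omegah)$ transfer to $(\vv_n)_n\din\textnormal{L}^2([a,b];\H^1(\Omega))$, using the chain rule $\nabla_\xx\vv_n=(\nabla\mathcal{A}_t)^{\!\top}(\nabla_\xx\uu_n)\circ\mathcal{A}_t$ and the uniform $\mathscr{C}^1$ control of $\Theta$; the uniform-in-time bound gives $(\vv_n)_n\din\textnormal{L}^\infty(\R;\textnormal{L}^2(\Omega))$. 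The delicate input is the time-derivative estimate \eqref{ineq:lem}, which is stated only against \emph{divergence-free} test functions on $\Omegah$; the change of variables does not preserve the divergence-free constraint, so $\vv_n$ is not solenoidal. The natural remedy is to phrase compactness not for $\vv_n$ directly but through the translation-in-time criterion of Simon: establish an equicontinuity estimate $\|\lambda_\sigma\vv_n-\vv_n\|_{\textnormal{L}^2([a,b];\H^{-m}(\Omega))}\to0$ as $\sigma\to0$, uniformly in $n$, and combine it with the compact embedding $\H^1(\Omega)\hookrightarrow\textnormal{L}^2(\Omega)\hookrightarrow\H^{-m}(\Omega)$.

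To obtain that time-equicontinuity I would test the increment $\lambda_\sigma\vv_n-\vv_n$ against smooth functions and pull the pairing back to $\Omegah$. Here the solenoidal restriction in \eqref{ineq:lem} is the main obstacle: one only controls $\partial_t\uu_n$ against fields $\ppsi$ that are divergence-free in $\xx$. The way around it is a Helmholtz/Leray-type decomposition, splitting an arbitrary spatial test function into a divergence-free part and a gradient part; the gradient part pairs with the solenoidal $\uu_n$ to give zero (or is controlled by the normal-trace bound \eqref{ineq:trace:div} since $\uu_n$ vanishes outside $\Omegah$), while the divergence-free part is estimated by \eqref{ineq:lem}. Carrying the time-derivative bound through the moving frame requires accounting for the convective term $\partial_t\Theta\cdot\nabla_\xx$ generated by differentiating $\vv_n(t,x)=\uu_n(t,\mathcal{A}_t(x))$ in $t$; this term is controlled by the already-established $\textnormal{L}^2$ gradient bound together with the boundedness of $\partial_t\Theta$ from Assumption \ref{ass:A1}.

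The main difficulty I anticipate is reconciling the divergence-free-only dual estimate with the loss of the solenoidal structure under the flattening map, and in particular checking that the Leray projection commutes acceptably with the time-dependent change of variables. Once the uniform time-equicontinuity in $\H^{-m}(\Omega)$ is secured alongside the $\textnormal{L}^2([a,b];\H^1(\Omega))$ bound, Simon's compactness criterion yields $(\vv_n)_n\ddin\textnormal{L}^2([a,b]\times\Omega)$, and the inverse change of variables $\mathcal{A}_t^{-1}$ (again bi-Lipschitz with uniform Jacobian bounds) returns the desired strong compactness $(\uu_n)_n\ddin\textnormal{L}^2(\Omegah)$, completing the argument.
=== END LATEX ===
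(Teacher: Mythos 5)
Your overall reduction is appealing, but it contains a genuine gap that cannot be repaired under the theorem's hypotheses: the flattening map is not differentiable in time. Assumption \ref{ass:A1} only requires $\Theta\in\mathscr{C}^0([a,b];\mathscr{C}^1(\R^d))$, i.e.\ mere continuity in $t$; the quantity $\partial_t\Theta$ that you invoke to control the convective term $\partial_t\Theta\cdot(\nabla_\xx\uu_n)\circ\mathcal{A}_t$ in $\partial_t\vv_n$ need not exist, let alone be bounded. So the time-derivative information for $\vv_n$ --- the one ingredient Simon's criterion cannot do without --- is simply not available in the flattened frame. Strengthening the hypothesis to $\Theta$ Lipschitz or $\mathscr{C}^1$ in time would make your argument salvageable but would prove a strictly weaker theorem; one of the stated points of this result is precisely to avoid such assumptions on the motion (in contrast with the slicing method of \cite{fuj}). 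The paper's proof never differentiates the change of variables: it stays on $\Omegah$, mollifies in space only, and obtains time-equicontinuity of $\uu_n$ itself through the Kolmogorov--Riesz criterion, after first upgrading the bound to $\Ld^r(\Omegah)$ with $r>2$ by interpolation so that local compactness suffices.

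The second weak point is that the step you yourself flag as the main difficulty --- reconciling the divergence-free-only estimate \eqref{ineq:lem} with a Helmholtz decomposition --- is left as an intention rather than an argument. The Leray projection of a smooth compactly supported field is in general neither smooth with compact support nor controlled in the $\H^N$ norms appearing on the right of \eqref{ineq:lem}, so it is not an admissible test function there; moreover the divergence-free component only lies in $\Ldivoo^2$, and the defect between $\Ldiv^2$ and $\Ldivoo^2$ is measured by a normal trace on a moving interior boundary (Lemma \ref{lem:norm2}, Remark \ref{rmk:normeq}). This is exactly the point the paper's proof spends Lemmas \ref{lem:norm2}--\ref{lem:norm2t} and its Step 1 on: after spatial mollification, $\uu_n\star\ffi_\delta$ is projected onto $\Ldivoo^2(\Omegah_{2\delta})$ via $\Pp_{2\delta}$, and the projection error is bounded by the normal trace on $\partial\Omegah_{2\delta}$, which is small uniformly in $n$ because $\uu_n$ vanishes outside $\Omegah$ and the $\Ld^r$ bound with $r>2$ controls the mass in the thin shell $\Omegah_{-2\delta}\setminus\Omegah_{3\delta}$. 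Without a quantitative substitute for this mechanism, your plan does not close.
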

The bounds assumed on the sequence $(\uu_n)_n$ are typical of the incompressible Navier-Stokes equations, considered in a non-cylindrical domain. Indeed, recall the incompressible Navier-Stokes equations :
\begin{align}
 \label{eq:ns}\partial_t \uu + \uu\cdot \nabla_{\xx} \uu -\Delta_{\xx} \uu + \nabla_{\xx} p &= 0,\\
 \label{eq:nsdiv}\div_{\xx} \uu = 0.
\end{align}
If these equations are considered on $\Omegah$ with appropriate boundary conditions, one has the following formal energy equality
\begin{align*}
\frac{\dd}{\dd t}\int_{\Omega_t} |\uu(t,\xx)|^2 \dd \xx + \int_0^t
\int_{\Omega_s} |\nabla_{\xx}\uu(s,x)|^2 \dd \xx\dd s = 0,
\end{align*}
which explain the assumptions on $(\uu_n)_n$ and $(\nabla_{\xx}\uu_n)_n$. But what about $(\partial_t \uu_n)_n$ ? In fact it is not possible to estimate directly $\partial_t \uu$ from the equation (as this is usually the case in the Aubin-Lions Lemma), because of the pressure term $\nabla_{\xx} p$, from which usually very less is known. However, testing \eqref{eq:ns} against a smooth divergence free vector field $\ppsi$ gives rise to the estimate \eqref{ineq:lem}. This estimate is the equivalent of estimate (ii) in subsection \ref{subsec:beyond}. Notice, that we may not write it as $(\partial_t \uu_n)_n\din \Ld^r(I;Y)$ precisely because the domain is non-cylindrical, whence the dual formulation \eqref{ineq:lem}. In fact Theorem \ref{thm:aubin}  has been partially tailored for the study of a fluid/kinetic coupling in a moving domain, and will soon be used in this context by the author and some collaborators. Of course, normally \eqref{eq:ns} -- \eqref{eq:nsdiv} are completed with boundary (and initial) conditions, and the assumptions of Theorem \ref{thm:aubin} suggest that we may only handle homogeneous Dirichlet boundary conditions (that is, $\uu$ vanishes on the boundary). More general boundary conditions may in fact be handled, this is the purpose of Corollary \ref{coro:thm:aubin}.

\vspace{2mm}

As explained in case (c) of subsection \ref{subsec:beyond}, the existing proofs of such a compactness result in  non-cylindrical domain are based on the following observation: $\hat{\Omega}$ may be decomposed, up to some small subset, into a finite union of cylindrical domains. On each one of these, one may then invoke the usual Aubin-Lions Lemma. We give here a totally different proof which,  gives strong compactness, \emph{without using} the standard Aubin-Lions lemma. In particular, the method applies in the case of cylindrical domains and gives hence a new proof of the Aubin-Lions lemma (but not in a framework as general as in \cite{simon}). It avoids also the ``slicing'' step for the non-cylindrical domain, which leads in \cite{fuj} to intricate assumptions for the regularity of the motion of the domain whereas Assumption \ref{ass:A1} is simpler and weaker. The method we elaborate is far simpler when one replaces \eqref{ineq:lem} by dual estimate against all test function (and not only divergence-free), see Theorem \ref{thm:aubin:gen}. In fact, the main difficulty of Theorem \ref{thm:aubin} concerns this point.

\subsection{Structure of the paper} 

Let us now describe the structure of this paper. Section \ref{sec:compe} is devoted to results reminiscent of the celebrated compensated compactness phenomenon exhibited by Murat and Tartar in \cite{murat,tartar} (see also \cite{ger}). The results of section \ref{sec:compe} will be used in the proof of Theorem \ref{thm:deg} but are also interesting for their own sake. As a byproduct, we explain for instance how to handle one of the nonlinearities arising in \cite{jabin} in the study of a hydrodynamic limit, in dimension $2$. Section \ref{sec:degen} focuses on the proof of Theorem \ref{thm:deg}. Finally, in section \ref{sec:mov} we prove Theorem \ref{thm:aubin}. Subsection \ref{subsec:epint} gives general uniform properties for $\ep$-interior sets, as defined in subsection \ref{subsec:not}. The proof of Theorem \ref{thm:aubin} takes it simpler form in the case when the dual estimate on the time derivative \eqref{ineq:lem} is known for all test functions, we hence dedicate subsection \ref{subsec:easy} to this simplified framework. In the general case the proof is a bit more involved and relies on properties of divergence-free vector fields that we describe in subsection \ref{subsec:div}. The core of the proof of Theorem \ref{thm:aubin} is contained in subsection \ref{subsec:nav}. Section \ref{sec:mov} is totally independent of sections \ref{sec:compe} and \ref{sec:degen}. Finally we prove in appendix section \ref{sec:appe} two technical results.

\section{Weak convergence of a product}\label{sec:compe}


Proposition \ref{propo:compe:space} below is directly inspired from an argument used in \cite{diperna} (in the $\textnormal{L}^\infty / \textnormal{L}^1$ framework). It was already stated and proved in \cite{saad} (in a periodic setting), but we reproduce it here with the proof, for the sake of completeness and give also two other variants. Let us first treat the case without boundary, in order to use freely the convolution in the $\xx$ variable. Since this work is motivated by evolution equations, $t$ represents here the time variable and $I$ is some compact intervall of $\R$, but it is quite clear that similar results may be obtained replacing $I$ by some bounded open set of $\R^d$, with suitable assumptions.

\vspace{2mm}

We will use repeatedly a sequence $(\ffi_k)_k$  of nonnegative even mollifiers (in space only) : $\ffi_k(\xx) := k^{d}\ffi( k \xx)$, where $\ffi$ is some smooth even nonnegative function with support in the unit ball of $\R^d$. In all this section the convolution $\star$ has to be understood in the space variable $\xx$ only.

\vspace{2mm}

We start with a ``commutator Lemma'' reminiscent of the usual Friedrichs Lemma (both being key elements of \cite{diperna}), the only difference is that no differential operation is involved here, but the convergence holds uniformly in $n$.

\begin{lem}\label{ll}
Let $q\in[1,\infty]$ and $p\in[1,d]$ and $I\subset\R$ a non-empty closed and bounded interval. Consider  $(a_n)_n \din \textnormal{L}^q(I;\W^{1,p}(\R^d))$ and $(b_n)_n \din \textnormal{L}^{q'}( I ;\textnormal{L}^{\alpha'}(\R^d))$ with $\alpha < p^\star$. Then the commutator (convolution in $\xx$ only) 
\begin{align*}
S_{n,k} &:= a_n\, (b_n\star\ffi_k)-(a_n\, b_n)\star\ffi_k
\end{align*}
goes to $0$  in $\textnormal{L}^1(\R\times\R^d)$ as $k\rightarrow +\infty$, uniformly in $n$.
\end{lem}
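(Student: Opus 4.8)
The plan is to rewrite the commutator as a single average against the mollifier and to reduce the whole statement to a \emph{uniform} (in $n$) modulus of continuity for spatial translations of $(a_n)_n$. Writing the convolution explicitly and changing variables $\zz=\xx-\yy$, one has for a.e. $(t,\xx)$
\begin{align*}
S_{n,k}(t,\xx) = \int_{\R^d} \big[a_n(t,\xx) - a_n(t,\xx-\zz)\big]\, b_n(t,\xx-\zz)\, \ffi_k(\zz)\,\dd\zz,
\end{align*}
so that, using $\ffi_k\ge 0$, $\int\ffi_k=1$ and $\mathrm{supp}\,\ffi_k\subset\B(0,1/k)$,
\begin{align*}
\|S_{n,k}\|_{\L^1(\R\times\R^d)} \le \sup_{|\zz|\le 1/k}\omega_n(\zz),\qquad \omega_n(\zz):=\int_I\!\!\int_{\R^d}|a_n-\tau_\zz a_n|\,|\tau_\zz b_n|\,\dd\xx\,\dd t.
\end{align*}
It therefore suffices to show that $\sup_n\omega_n(\zz)\to 0$ as $\zz\to 0$.

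Next I would bound $\omega_n$ by Hölder's inequality, first in $\xx$ with the conjugate pair $(\alpha,\alpha')$ and then in $t$ with $(q,q')$, exploiting the translation invariance of the Lebesgue norm of $b_n$:
\begin{align*}
\omega_n(\zz) \le \|a_n-\tau_\zz a_n\|_{\L^q(I;\L^\alpha)}\,\|b_n\|_{\L^{q'}(I;\L^{\alpha'})}.
\end{align*}
The second factor is bounded uniformly in $n$ by hypothesis, so the whole matter reduces to the uniform translation-continuity estimate $\sup_n\|a_n-\tau_\zz a_n\|_{\L^q(I;\L^\alpha)}\to 0$ as $\zz\to 0$.

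The heart of the argument, and where the assumption $\alpha<p^\star$ enters, is to obtain a quantitative, uniform rate by interpolation. On one hand the elementary bound $\|f-\tau_\zz f\|_p\le|\zz|\,\|\nabla f\|_p$ (valid on $\W^{1,p}(\R^d)$ by density of smooth functions and the fundamental theorem of calculus) gives a small factor, but only in $\L^p$; on the other hand the Sobolev embedding $\W^{1,p}\hookrightarrow\L^{p^\star}$ gives $\|a_n-\tau_\zz a_n\|_{p^\star}\le 2\|a_n\|_{p^\star}\le C\|a_n\|_{\W^{1,p}}$, which is bounded but carries no gain. Writing $1/\alpha=\theta/p+(1-\theta)/p^\star$ with $\theta\in(0,1]$ — here $\alpha<p^\star$ forces $\theta>0$, which is precisely the source of the vanishing factor — I would interpolate in $\xx$ pointwise in $t$ and then apply Hölder in $t$ with exponents $1/\theta$ and $1/(1-\theta)$ to get
\begin{align*}
\|a_n-\tau_\zz a_n\|_{\L^q(I;\L^\alpha)} \le C\,|\zz|^{\theta}\,\|a_n\|_{\L^q(I;\W^{1,p})}.
\end{align*}
Since $(a_n)_n\din\L^q(I;\W^{1,p})$, this is $\le C'|\zz|^\theta$ uniformly in $n$; combining the three displays gives $\sup_n\|S_{n,k}\|_{\L^1}\le C''k^{-\theta}\to 0$.

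The main obstacle is exactly this uniformity in $n$: a pointwise-in-$n$ statement would follow by density, but continuity of translations is not uniform over a bounded, non-compact family, so one cannot merely invoke it. The interpolation step circumvents this by converting the mere $\W^{1,p}$-bound into an explicit Hölder rate $|\zz|^\theta$ with $\theta>0$ guaranteed by $\alpha<p^\star$. Two borderline points remain to be checked: when $p=d$ one replaces $p^\star=\infty$ by an arbitrary finite $r>\alpha$, using $\W^{1,d}\hookrightarrow\L^r$; and when $\theta=1$ (i.e. $\alpha=p$) the Hölder step in $t$ is unnecessary, the estimate reducing to the plain gradient bound. The endpoint cases $q\in\{1,\infty\}$ are handled by the usual conventions for Hölder's inequality.
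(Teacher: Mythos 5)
Your proof is correct and follows essentially the same route as the paper's: you rewrite the commutator as an average of $[a_n-\tau_{\yy}a_n]\,\tau_{\yy}b_n$ against $\ffi_k$ and reduce, via Hölder and Fubini, to the uniform-in-$n$ smallness of $\|a_n-\tau_{\zz}a_n\|_{\textnormal{L}^q(I;\textnormal{L}^\alpha(\R^d))}$ as $\zz\to 0$, exactly as in the paper. The only difference is that the paper simply asserts this uniform translation estimate as a consequence of the $\W^{1,p}$ bound and $\alpha<p^\star$, whereas you make it quantitative by interpolating between the gradient bound in $\textnormal{L}^p$ and the Sobolev bound in $\textnormal{L}^{p^\star}$ — a useful explicit justification of the step the paper leaves implicit, not a different method.
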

\begin{proof}
\medskip
Since $(a_n)_n \din\textnormal{L}^q(I;\W^{1,p}(\R^d))$ and $\alpha<p^\star$, we have
\begin{align*}
(\tau_{\hh} a_n-a_n)_n  \conv{\hh}{0} 0 \text{ in } \textnormal{L}^q(I;\textnormal{L}^\alpha(\R^d)),
\end{align*}
uniformly in $n$. We now follow \cite{diperna} and write the following equality for the commutator
  \begin{align}\label{commut}
S_{n,k}(t,\xx) =\int_{|\yy|\leq 1/k}\Big[a_n(t,\xx)-a_n(t,\xx-\yy)\Big]\,
b_n(t,\xx-\yy)\, \ffi_k(\yy)\, \dd \yy,
\end{align}
whence thanks to Fubini's Theorem, integrating on $I\times\R^d$
\begin{align*}
\|S_{n,k}\|_{\textnormal{L}^1(I\times\R^d)}\leq \|b_n\|_{\textnormal{L}^{q'}(I;\textnormal{L}^{\alpha'}(\R^d))}  \int_{|\yy|\leq 1/k} |\ffi_k(\yy)| \,
\|\tau_\yy a_n-a_n\|_{\textnormal{L}^{q}(I;\textnormal{L}^{\alpha}(\R^d))}
\, \dd \yy,
\end{align*}
which yields the desired uniform convergence.$\qedhere$ \end{proof}

\begin{proposition}\label{propo:compe:space}
Let $q\in[1,\infty]$ and $p\in[1,d]$ and $I\subset\R$ a non-empty segment. Consider  $(a_n)_n \din \textnormal{L}^q(I;\W^{1,p}(\R^d))$ and $(b_n)_n \din \textnormal{L}^{q'}( I ;\textnormal{L}^{\alpha'}(\R^d))$ respectively weakly or weakly$-\star$ converging in these spaces to $a$ and $b$. Assume that $\alpha < p^\star$. If $(\partial_t b_n)_n\din \mathscr{M}(I;\H^{-m}(\R^d))$ for some $m\in\N$ then, up to a subsequence, we have the following vague convergence in $\mathscr{M}(I\times\R^d)$ (\emph{i.e.} with $\mathscr{C}_c^0(I\times\R^d)$ test functions) : 
\begin{align}
\label{conv:compe}(a_n \,b_n)_n \convw{n}{+\infty} a\,b.
\end{align} 
\end{proposition}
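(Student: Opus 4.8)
The plan is to run the regularization scheme of DiPerna--Lions that already underlies Lemma \ref{ll}: I trade the spatial regularity of $(a_n)_n$ against the temporal regularity of $(b_n)_n$ by mollifying \emph{the second factor in space}, letting the commutator $S_{n,k}$ absorb the fact that both factors only converge weakly. Fix $\psi\in\mathscr{C}_c^0(I\times\R^d)$; it suffices to prove $\int a_n b_n\,\psi\to\int a b\,\psi$. Since $\ffi_k$ is even, I would start from the telescoping identity
\begin{align*}
\int_{I\times\R^d} a_n b_n\,\psi = \int_{I\times\R^d}\big((a_n b_n)\star\ffi_k\big)\,\psi + \int_{I\times\R^d} a_n b_n\,(\psi-\psi\star\ffi_k),
\end{align*}
and observe that the last term tends to $0$ as $k\to+\infty$ \emph{uniformly in $n$}: by H\"older the family $(a_n b_n)_n$ is bounded in $\textnormal{L}^1_{\textnormal{loc}}(I\times\R^d)$ (this is where $\alpha<p^\star$ enters, via $1/p^\star+1/\alpha'<1$), while $\psi\star\ffi_k\to\psi$ uniformly with fixed compact support. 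Substituting $(a_n b_n)\star\ffi_k = a_n\,(b_n\star\ffi_k)-S_{n,k}$ and discarding $S_{n,k}$ thanks to Lemma \ref{ll} (again uniformly in $n$), I reduce the whole statement to the uniform estimate
\begin{align*}
\int_{I\times\R^d} a_n b_n\,\psi = \int_{I\times\R^d} a_n\,(b_n\star\ffi_k)\,\psi + o_k(1),
\end{align*}
where $o_k(1)$ is bounded independently of $n$.

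The crux is the fixed-$k$ limit $n\to+\infty$ of $\int a_n\,(b_n\star\ffi_k)\,\psi$, and this is the step I expect to be the main obstacle. Writing $g_n^k:=b_n\star\ffi_k$, convolution with the smooth compactly supported $\ffi_k$ gains arbitrary spatial regularity, so $(g_n^k)_n$ is bounded (for fixed $k$) in $\textnormal{L}^{q'}(I;\W^{M,\alpha'}(K))$ for every $M$ and every compact $K$; in particular its space translations are controlled uniformly in $n$. Simultaneously $\partial_t g_n^k=(\partial_t b_n)\star\ffi_k$ is bounded in $\mathscr{M}(I;\textnormal{L}^2(K))$, because convolving an $\H^{-m}(\R^d)$ element with $\ffi_k$ lands in $\textnormal{L}^2(K)$ with norm $\lesssim_k\|\partial_t b_n\|_{\H^{-m}}$; this yields the uniform time-translation bound
\begin{align*}
\int_{I'}\|\lambda_\sigma g_n^k-g_n^k\|_{\textnormal{L}^2(K)}\,\dd t \leq |\sigma|\,\|\partial_t g_n^k\|_{\mathscr{M}(I;\textnormal{L}^2(K))}.
\end{align*}
By the Riesz--Fr\'echet--Kolmogorov criterion (equivalently a Simon-type argument tolerating a merely measure-valued time derivative), $(g_n^k)_n$ is relatively compact in $\textnormal{L}^1(I\times K)$ for each compact $K$ and each $k$; a diagonal extraction over $k\in\N$ then furnishes a \emph{single} subsequence along which $g_n^k\to b\star\ffi_k$ strongly in $\textnormal{L}^1_{\textnormal{loc}}$ for every $k$. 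Interpolating this with the uniform $\textnormal{L}^{q'}(I;\mathscr{C}^0(K))$ bound upgrades it to strong convergence of $(b_n\star\ffi_k)\psi$ in $\textnormal{L}^{q'}(I;\textnormal{L}^{(p^\star)'}(\R^d))$.

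It remains to assemble the double limit. Since $a_n$ converges weakly (or weakly$-\star$) to $a$ in $\textnormal{L}^q(I;\textnormal{L}^{p^\star}(\R^d))$ (the continuous embedding $\W^{1,p}\hookrightarrow\textnormal{L}^{p^\star}$ being weakly continuous), the weak--strong pairing gives, for each fixed $k$,
\begin{align*}
\lim_{n\to+\infty}\int_{I\times\R^d} a_n\,(b_n\star\ffi_k)\,\psi = \int_{I\times\R^d} a\,(b\star\ffi_k)\,\psi.
\end{align*}
Feeding this into the uniform estimate of the first paragraph, both $\limsup_n$ and $\liminf_n$ of $\int a_n b_n\,\psi$ lie within $o_k(1)$ of $\int a\,(b\star\ffi_k)\,\psi$. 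Finally $b\star\ffi_k\to b$ in $\textnormal{L}^{q'}(I;\textnormal{L}^{\alpha'}(\R^d))$ as $k\to+\infty$, so $\int a\,(b\star\ffi_k)\,\psi\to\int a b\,\psi$; letting $k\to+\infty$ forces $\limsup_n=\liminf_n=\int a b\,\psi$, which is exactly the claimed vague convergence \eqref{conv:compe}. The whole difficulty is thus concentrated in the compactness of $b_n\star\ffi_k$, where the spatial smoothing provided by the convolution must be married to the only temporal control available, namely the bound on $\partial_t b_n$ in $\mathscr{M}(I;\H^{-m})$.
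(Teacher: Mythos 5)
Your proposal is correct and follows essentially the same route as the paper's proof: the same telescoping decomposition through $(a_nb_n)\star\ffi_k$ and $a_n(b_n\star\ffi_k)$, the same use of Lemma \ref{ll} to kill the commutator uniformly in $n$, and the same exploitation of the bound on $(\partial_t b_n)_n$ in $\mathscr{M}(I;\H^{-m})$ to get strong compactness of the spatially mollified sequence $(b_n\star\ffi_k)_n$ at fixed $k$, before letting $k\to+\infty$. The only (cosmetic) divergence is in that compactness step, where the paper goes through a $\textnormal{BV}$ bound and almost-everywhere convergence combined with the uniform $\textnormal{L}^\infty$ bound on $b_n\star\ffi_k$ — a route you may prefer to your interpolation upgrade to $\textnormal{L}^{q'}(I;\textnormal{L}^{(p^\star)'})$, which is slightly delicate in the endpoint case $q'=\infty$.
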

\begin{rmk} 
As explained in the introduction, we may recognize in this lemma a kind of compensated compactness flavor since, in the above result, neither $(a_n)_n$ nor $(b_n)_n$ do converge strongly: both may oscillate but only in a somehow compatible way. Nevertheless, as far as we know, this result does not exactly recast in the work of Murat and Tartar. 
\end{rmk}
\begin{rmk}
If $a_n=b_n$, one gets strong compactness for $(a_n)_n$. Of course this situation is nothing else than a particular case of the usual Aubin-Lions Lemma.
\end{rmk}
\begin{proof}
Fix $N\in\N$ and denote $B_N:=\B(0,N)\subset\R^d$ (open ball of radius $N$), $O_N:=I\times B_N$.  By a standard diagonal argument, it suffices to prove, up to a subsequence, that $(a_n b_n)_n \rightharpoonup ab$ in the vague topology of $\mathscr{M}(O_N)$, that is, against $\mathscr{C}_c^0(O_N)$ test functions.  

\vspace{2mm} 

Let us follow the following routine to conclude the proof.

\begin{itemize}
\item[\emph{Step 1.}] We have clearly
\begin{align*}
a \,(b\star\ffi_k) \operatorname*{\longrightarrow}_{k \rightarrow +\infty} a\, b,\text{ in }\textnormal{L}^1(O_N) \text{ strong } .
\end{align*}
\item[\emph{Step 2.}] Since $(\partial_t b_n)_n \din\mathscr{M}(\R;\Hh^{-m}(\R^d))$,  we get easily $(b_n\star \ffi_k)_n \din\textnormal{BV}(O_N)$ so that, we can choose (but we don't write it explicitly) a common (diagonal) extraction such as, for all fixed $k$, $(b_n \star\ffi_k)_n$ converges a.e. on $O_N$  to $b\star\ffi_k$. We hence deduce from the preceding fact that (for all fixed $k$)
\begin{align*}
(a_n \,(b_n\star\ffi_k))_n \operatorname*{\longrightharpoonup}_{n \rightarrow +\infty} 
a\, (b\star\ffi_k) \text{ in } \textnormal{L}^1(O_N)\text{ weak.}
\end{align*}
Indeed, we have (at least) the weak convergence of $(a_n)_n$ towards $a$ in $\Ld^1(O_N)$, and since $(b_n\star\ffi_k)_n$ converges to $b\star\ffi_k$ a.e. on $O_N$, the above convergence follows from the estimate $(b_n\star\ffi_k)_n \din\Ld^\infty(O_N)$, the latter being a direct consequence of $(\partial_t b_n)_n \din\mathscr{M}(I;\Hh^{-m}(\R^d))$.
\item[\emph{Step 3.}] 
  From Lemma \ref{ll} we infer
\begin{align*}
\sup_n \|a_n(b_n\star\ffi_k)-(a_n b_n)\star\ffi_k\|_1 \conv{k}{+\infty} 0.
\end{align*}
\item[\emph{Step 4.}] For a fixed $\theta \in \mathscr{C}^0_c(O_N)$ 
\begin{align*}
\langle (a_n \,b_n)\star\ffi_k-a_n\, b_n,\theta\rangle \operatorname*{\longrightarrow}_{k \rightarrow +\infty} 0,
\end{align*}
uniformly in $n$. Indeed, since $\ffi_k$ is even, we may write
\begin{align*}
\langle (a_n \,b_n)\star\ffi_k-a_n\, b_n,\theta\rangle = \langle a_n\, b_n,\theta\star\ffi_k-\theta\rangle,
\end{align*}
and the right-hand side tends to $0$ with the desired uniformity because $(a_n \,b_n)_n$ is bounded in $\textnormal{L}^1(I\times\R^d)$, and $(\theta\star\ffi_k-\theta)_k$ goes to $0$ in $\textnormal{L}^\infty(I\times\R^d)$ ($\theta$ is uniformly continuous ).
\item[\emph{Step 5.}] Write
\begin{align*}
a\,b-a_n\, b_n &= a\,b- a\,(b\star\ffi_k)\\
&+ a\,(b\star\ffi_k)-a_n\, (b_n\star\ffi_k)\\
&+ a_n\,(b_n\star\ffi_k)-(a_n\, b_n)\star\ffi_k\\
&+ (a_n \,b_n)\star\ffi_k -a_n\, b_n.
\end{align*}
Fix $\theta\in\mathscr{C}^0_{c}(O_N)$, multiply the previous equality by $\theta$ and integrate over $O_N$.  In the right-hand side, line number $i\in\{1,2,3,4\}$ corresponds to the Step $i$ proven previously. We choose first $k$ to handle (uniformly in $n$) all the lines of the right-hand side, except the second one. Then, we
 choose the appropriate $n$ to handle the second line, thanks to Step 2.
 This concludes the proof of Proposition \ref{propo:compe:space}.$\qedhere$
\end{itemize}
\end{proof}

It is worth noticing that in the proof of Proposition \ref{propo:compe:space}, the only step in which the assumption $(\nabla_{\xx} a_n)\din\textnormal{L}^q(I;\textnormal{L}^p(\R^d))$ is crucial is Step 3, for the treatment of the ``commutator''.  In fact one can easily relax this assumption in the following way. 
If $X$ denotes some abstract function space of the $\xx$ variable,  a sufficient condition on $(a_n)_n$  (to handle Step 3) is that $\|\tau_{\hh} a_n - a_n\|_{\textnormal{L}^q(I;X)} $ goes to $0$ with $\hh$, provided that $(b_n)_n \din \textnormal{L}^{q'}(I;X')$. Bearing this in mind, one may for instance prove the following Proposition

\begin{proposition}\label{propo:compe:space2}
Consider a non-empty segment $I\subset\R$ and two sequences $(a_n)_n$ and $(b_n)_n$ weakly converging in $\textnormal{L}^1(I\times\R^2)$ to $a$ and $b$. Assume that $(a_n)_n\din\textnormal{L}^q(I;\H^1(\R^2))$ and that $(|b_n|\log |b_n|)_n \din\textnormal{L}^{q'}(I;\textnormal{L}^1(\R^2))$ and $(\partial_t b_n)_n \din\mathscr{M}(I;\H^{-m}(\R^2))$ for some $m\in\N$. Then, up to a subsequence, we have the following vague convergence in $\mathscr{M}(I\times\R^d)$ (\emph{i.e.} with $\mathscr{C}_c^0(I\times\R^d)$ test functions) : $\ds(a_n \,b_n)_n \convw{n}{+\infty} a\,b$.
\end{proposition}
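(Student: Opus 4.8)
The plan is to follow verbatim the five-step scheme of the proof of Proposition~\ref{propo:compe:space}, the only modification being the choice of the abstract pair of dual spaces $(X,X')$ in the $\xx$ variable, adapted to the critical Sobolev exponent $p=d=2$ (for which $p^\star=\infty$ and the target of the embedding is \emph{not} $\textnormal{L}^\infty$, so Proposition~\ref{propo:compe:space} does not apply as such). I would take $X=\exp\Ld(\R^2)$, the exponential Orlicz space, and $X'=\Ld\log\Ld(\R^2)$, the Zygmund space: these are built on complementary Young functions $\tilde A(s)\sim e^s-s-1$ and $A(t)\sim t\log^+ t$, are in duality, and satisfy the generalized Hölder inequality $\int_{\R^2}|fg|\leq 2\,\|f\|_{\exp\Ld}\,\|g\|_{\Ld\log\Ld}$. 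The embedding replacing $\W^{1,p}\hookrightarrow\textnormal{L}^{p^\star}$ is here the Trudinger--Moser embedding $\H^1(\R^2)\hookrightarrow\exp\Ld(\R^2)$.

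Following the discussion preceding the statement, it then suffices to verify two conditions: that $(b_n)_n\din\Ld^{q'}(I;\Ld\log\Ld(\R^2))$, and that translations are uniformly equicontinuous, $\sup_n\|\tau_{\yy}a_n-a_n\|_{\Ld^q(I;\exp\Ld)}\conv{\yy}{0}0$. The first is straightforward: since $(b_n)_n$ is bounded in $\Ld^1$ (being weakly convergent) and $(|b_n|\log|b_n|)_n\din\Ld^{q'}(I;\Ld^1)$, the quantity $\int_{\R^2}|b_n|\log(e+|b_n|)$ is controlled, which bounds the Luxemburg norm in $\Ld\log\Ld$; the bound passes to the limit $b$ by convexity and Fatou, so $b\in\Ld^{q'}(I;\Ld\log\Ld)$ as well.

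The core of the argument, and the main obstacle, is the second condition: $\exp\Ld$ is non-separable, so translation is \emph{not} continuous on it, and equicontinuity must be extracted from the $\H^1$ bound alone. I would fix $f\in\H^1(\R^2)$, set $g:=f-\tau_{\yy}f$, and combine the elementary bound $\|g\|_{\Ld^2}\leq|\yy|\,\|\nabla f\|_{\Ld^2}$ with the sharp two-dimensional Gagliardo--Nirenberg inequality $\|g\|_{\Ld^j}\leq C\sqrt{j}\,\|g\|_{\Ld^2}^{2/j}\|\nabla g\|_{\Ld^2}^{1-2/j}$ (valid for $j\geq2$), expanding the Luxemburg norm through the power series of $\tilde A$:
\begin{align*}
\int_{\R^2}\tilde A\!\left(\frac{|g|}{\lambda}\right)\dd\xx=\sum_{j\geq2}\frac{\|g\|_{\Ld^j}^j}{j!\,\lambda^j}\leq\frac{|\yy|^2}{4}\sum_{j\geq2}\frac{1}{j^{j/2}}\left(\frac{Ce\,M}{\lambda}\right)^{\!j},\qquad M:=2\|f\|_{\H^1},
\end{align*}
where I used $\|\nabla g\|_{\Ld^2}\leq M$, the bound $j!\geq(j/e)^j$, and the cancellation $\|g\|_{\Ld^2}^2/M^2\leq|\yy|^2/4$ (so that $\|f\|_{\H^1}$ disappears from the prefactor). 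The last series converges for every value of its argument and is an increasing function $F(R)$ of $R:=Ce\,M/\lambda$ with $F(R)\to+\infty$; choosing $\lambda$ so that $\tfrac{|\yy|^2}{4}F(R)\leq1$ forces $R\to+\infty$, i.e. $\lambda/\|f\|_{\H^1}=:\omega(|\yy|)\to0$ as $|\yy|\to0$, with modulus $\omega$ independent of $f$. This gives $\|f-\tau_{\yy}f\|_{\exp\Ld}\leq\omega(|\yy|)\,\|f\|_{\H^1}$, and integrating in time yields the required uniform equicontinuity.

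With these two ingredients the five steps carry over unchanged, Hölder being replaced everywhere by its Orlicz counterpart. Step~1 uses $b\star\ffi_k\to b$ in $\Ld^{q'}(I;\Ld\log\Ld)$, legitimate because $\Ld\log\Ld$ has order-continuous norm (its Young function satisfies $\Delta_2$), so mollification converges; Step~2 is unaffected, since for fixed $k$ the sequence $(b_n\star\ffi_k)_n$ is still bounded in $\Ld^\infty(O_N)$ (from $\|b_n\star\ffi_k\|_\infty\leq k^d\|\ffi\|_\infty\|b_n\|_1$) and in $\textnormal{BV}(O_N)$ (from the bound on $\partial_t b_n$), converges a.e. up to a subsequence, while $a_n\rightharpoonup a$ weakly in $\Ld^1$ is uniformly integrable by Dunford--Pettis; Step~3 is exactly the commutator estimate, now closed by the two conditions above; Step~4 only needs $(a_nb_n)_n\din\Ld^1$, which is the Orlicz--Hölder bound; and Step~5 is the same telescoping. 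This would give the announced vague convergence $(a_n\,b_n)_n\convw{n}{+\infty}a\,b$.
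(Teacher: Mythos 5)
Your proof is correct and takes essentially the same route as the paper's: the same $\exp\Ld/\Ld\log\Ld$ Orlicz--Luxemburg duality with generalized H\"older, the Moser--Trudinger growth $\|f\|_{\Ld^j}\leq C\sqrt{j}\,\|f\|_{\H^1}$, and the series expansion of the Young function to obtain the uniform translation estimate \eqref{ineq:orlic}, which is then fed into the commutator \eqref{commut} exactly as in Lemma \ref{ll}. The only difference is one of detail: you make explicit the modulus-of-continuity computation (via the interpolation factor $\|g\|_{\Ld^2}^{2/j}$ and $j!\geq(j/e)^j$) that the paper asserts ``one may show easily''.
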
 
\begin{rmk}
Since the cornerstone in the below proof is the Moser-Trudinger inequality, this Lemma may of course be generalized to $\R^d$, replacing $\H^1(\R^2)$ by $\W^{1,d}(\R^d)$.
\end{rmk}
\begin{proof}
Let us sketch the proof briefly. As before we work on $B_N$ for the $\xx$ variable and without more precision $\|\cdot\|_p$ will denote the $\textnormal{L}^p(B_N)$ norm. Consider the two convex functions $\Phi:\R_+\ni x\mapsto e^x-x-1$ and $\Psi:\R_+\ni x\mapsto(1+y)\log(1+y)-y$. For any measurable function $f:\R^2\rightarrow\R$ such as $\Phi(|f|)\in\textnormal{L}^1(B_N)$ we recall the Luxemburg gauge
\begin{align*}
 \|f\|_{\Phi} := \inf\big\{ a>0\,:\, \|\Phi(|f|/a)\|_1 \leq 1 \big\},
\end{align*}
and define in a similar way $\|\cdot\|_{\Psi}$. It is straightforward to check that $\Phi$ and $\Psi$ are convex-conjugate of one another, and satisfy the Young inequality $xy \leq \Phi(x) + \Psi(y)$. One may then deduce the following generalized Hölder inequality, that is for all measurable functions $f$ and $g$ such as $\Phi(|f|) \in\textnormal{L}^1(\B_N)$ and $\Psi(|g|) \in\textnormal{L}^1(\B_N)$,
\begin{align}
\label{ineq:holder} \| fg\|_1 \leq \|f\|_{\Phi}\|g\|_{\Psi}.
\end{align} 
For more details on the previous inequality, Luxemburg gauge and Orlicz spaces, see \cite{adams} for instance. 
\vspace{2mm}
Since $(a_n)_n\din\textnormal{L}^q(I;\H^1(\R^2))$, we deduce from the Moser-Trudinger inequality (see again \cite{adams}) that $(e^{\alpha a_n^2})_n \din\textnormal{L}^q(I;\Ll^1(\R^2))$, for a positive constant $\alpha$ small enough (in fact $\alpha < 4\pi$). In particular, using \eqref{ineq:holder} and the bound $(|b_n|\log |b_n|)_n \din\textnormal{L}^{q'}(I;\textnormal{L}^1(\R^2))$, one gets $(a_n b_n)_n \din\textnormal{L}^1(I\times B_N)$. This estimate is sufficient to reproduce all Steps of the proof of Proposition \ref{propo:compe:space}, except the third one, and as mentionned before, only Lemma \ref{ll} has to be examined. 

\vspace{2mm}

The Moser-Trudinger inequality aforementionned is based on the following fact : there exists a universal constant $C$ such as, for all $p< \infty$, and all $f\in\H^1(\R^2)$, $\|f\|_{\textnormal{L}^p(\R^d)} \leq C \sqrt{p} \|f\|_{\H^1(\R^d)}$ (see again \cite{adams} for more details). Using this, and expanding the series defining $\Phi$, one may show easily the existence of a continuous and nondecreasing function $\ffi:\R_+\rightarrow\R_+$, vanishing in $0$, such as, for any $f$ in the unit ball of $\H^1(\R^2)$, 
\begin{align}
\label{ineq:orlic}\|f-\tau_{\hh} f\|_{\Phi} \leq \ffi(|\hh|).
\end{align}


The proof of Proposition \ref{propo:compe:space2} follows then using \eqref{ineq:orlic} and inequality \eqref{ineq:holder} in the expression giving the commutator \eqref{commut}. 
$\qedhere$
\end{proof}
The bounds assumed in Proposition \ref{propo:compe:space2} are not coming from nowhere. There are for instance the one obtained for the density of particles and the fluid velocity in the hydrodynamic limit studied by Goudon, Jabin and Vasseur in \cite{jabin} (in dimension $2$). We hence recover by compactness one of the nonlinear limit (in fact the easier one) explored in \cite{jabin}. Let us mention that in \cite{jabin} the authors used a relative entropy method, whence a passage to the limit only under the assumption of preparation of the data, an assumption that we obviously do not need to apply Proposition \ref{propo:compe:space2}. An other difference is that our method is always global (in time) whereas the relative entropy method is usually limited by the existence of regular solutions for the limit system ; but of course, in dimension $2$, global regular solutions for the density-dependent incompressible Navier-Stokes are known to exist (see \cite{danchin}), so that the two approaches rejoin on that point.

\vspace{2mm}
Let us conclude this section by giving a version of Proposition \ref{propo:compe:space} in the case of a bounded domain $\Omega$ using a simple localization argument: 
\begin{proposition}\label{propo:compe}
Let $q\in[1,\infty]$ and $p\in[1,d]$, $I\subset\R$ a non-empty segment and $\Omega\subset\R^d$ a bounded open set with Lipschitz boundary. Consider two sequences $(a_n)_n \din \textnormal{L}^q(I;\W^{1,p}(\Omega))$ and $(b_n)_n \din \textnormal{L}^{q'}(I;\textnormal{L}^{\alpha'}(\Omega))$ respectively weakly or weakly$-\star$ converging in these spaces to $a$ and $b$. Assume that $\alpha < p^\star$. If $(\partial_t b_n)_n\din \mathscr{M}(I;\H^{-m}(\Omega))$ for some $m\in\N$ then, up to a subsequence, we have the following weak$-\star$ convergence in $\mathscr{M}(I\times\overline{\Omega})$  (\emph{i.e.} with $\mathscr{C}^0(I\times\overline{\Omega})$ test functions) : 
\begin{align}
\label{conv:compe}(a_n \,b_n)_n \convw{n}{+\infty} a\,b.
\end{align} 
\end{proposition}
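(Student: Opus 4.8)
The plan is to reduce the statement to the whole-space Proposition \ref{propo:compe:space} by multiplying the sequences with a spatial cutoff, the gain being that after extension by zero one may again convolve freely on $\R^d$. Multiplication by a fixed $\chi\in\mathscr{D}(\Omega)$ preserves every hypothesis of that proposition; the only genuinely new point is the boundary, and this is where the \emph{strict} embedding exponent $\alpha<p^\star$ will be decisive.

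\emph{Interior.} Fix $\chi\in\mathscr{D}(\Omega)$ together with an auxiliary $\eta\in\mathscr{D}(\Omega)$ equal to $1$ on a neighbourhood of $\operatorname{supp}\chi$. After extension by zero outside $\Omega$ I would apply Proposition \ref{propo:compe:space} to the pair $(\chi a_n)_n$ and $(\eta b_n)_n$. One checks at once that $(\chi a_n)_n\din\L^q(I;\W^{1,p}(\R^d))$ (Leibniz rule) with $\chi a_n\rightharpoonup\chi a$, and that $(\eta b_n)_n\din\L^{q'}(I;\L^{\alpha'}(\R^d))$ with $\eta b_n\rightharpoonup\eta b$. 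The assumption deserving care is the one on the time derivative: since $\eta$ does not depend on $t$ we have $\partial_t(\eta b_n)=\eta\,\partial_t b_n$, and multiplication by $\eta$ sends $\H^{-m}(\Omega)$ continuously into $\H^{-m}(\R^d)$ --- indeed for $\psi\in\H^m(\R^d)$ the function $\eta\psi$ has compact support in $\Omega$, lies in $\H^m_0(\Omega)$, and satisfies $\|\eta\psi\|_{\H^m(\Omega)}\leq C_\eta\|\psi\|_{\H^m(\R^d)}$ --- so that $(\partial_t(\eta b_n))_n\din\mathscr{M}(I;\H^{-m}(\R^d))$. Proposition \ref{propo:compe:space} then gives, up to extraction, $(\chi a_n)(\eta b_n)\rightharpoonup(\chi a)(\eta b)$ vaguely on $I\times\R^d$; since $\chi\eta=\chi$ this is exactly $\chi\,a_nb_n\rightharpoonup\chi\,ab$. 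Exhausting $\Omega$ by an increasing sequence of such cutoffs and extracting diagonally, I obtain the vague convergence $(a_nb_n)_n\rightharpoonup ab$ in $\mathscr{M}(I\times\Omega)$, i.e. tested against any $\theta\in\mathscr{C}^0_c(I\times\Omega)$.

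\emph{Boundary layer.} To upgrade this into a weak-$\star$ convergence in $\mathscr{M}(I\times\overline{\Omega})$ I must forbid any mass from concentrating on $I\times\partial\Omega$. This is where I use $\alpha<p^\star$: setting $\beta:=1/\alpha-1/p^\star>0$ (for $p=d$ one replaces $p^\star$ by any finite exponent larger than $\alpha$), Hölder's inequality over the collar $\Omega\setminus\Omega_\delta$ combined with the Sobolev embedding $\W^{1,p}(\Omega)\hookrightarrow\L^{p^\star}(\Omega)$ yields
\begin{align*}
\int_{I\times(\Omega\setminus\Omega_\delta)}|a_nb_n|\;\leq\;\|a_n\|_{\L^q(I;\L^\alpha(\Omega\setminus\Omega_\delta))}\,\|b_n\|_{\L^{q'}(I;\L^{\alpha'}(\Omega))}\;\leq\;C\,|\Omega\setminus\Omega_\delta|^{\beta},
\end{align*}
with $C$ independent of $n$. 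As $|\Omega\setminus\Omega_\delta|\to0$ when $\delta\to0$, the right-hand side tends to $0$ uniformly in $n$, and the very same bound holds for the limit $ab$.

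\emph{Conclusion.} Given $\theta\in\mathscr{C}^0(I\times\overline{\Omega})$, I split $\theta=\chi_\delta\theta+(1-\chi_\delta)\theta$ with $\chi_\delta\in\mathscr{D}(\Omega)$, $0\leq\chi_\delta\leq1$, equal to $1$ on $\Omega_\delta$. The boundary-layer estimate bounds $\int a_nb_n(1-\chi_\delta)\theta$ and $\int ab(1-\chi_\delta)\theta$ uniformly in $n$ by a quantity vanishing with $\delta$, while for each fixed $\delta$ the interior convergence applied to $\chi_\delta\theta\in\mathscr{C}^0_c(I\times\Omega)$ gives $\int a_nb_n\chi_\delta\theta\to\int ab\chi_\delta\theta$. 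A standard $\ep/3$ argument then yields $\int_{I\times\Omega}a_nb_n\theta\to\int_{I\times\Omega}ab\theta$, which is the claim. The real obstacle is precisely this last, boundary part: the interior reduction to Proposition \ref{propo:compe:space} is essentially mechanical, whereas testing against functions that do not vanish on $\partial\Omega$ forces one to rule out concentration there, and the margin $\alpha<p^\star$ is exactly what makes the boundary collar negligible.
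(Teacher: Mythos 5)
Your proof is correct and follows essentially the same route as the paper's: localize with cutoffs in $\mathscr{D}(\Omega)$, extend by zero to reduce to Proposition \ref{propo:compe:space}, and kill the boundary collar via the uniform integrability coming from $\alpha<p^\star$ and the Sobolev embedding. The only cosmetic difference is that you use a pair of nested cutoffs $\chi,\eta$ where the paper multiplies both sequences by the same $\theta_k$ and works with $\theta_k^2 a_nb_n$; the argument is otherwise identical.
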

\begin{proof}
First notice that, since $\alpha ' > (p^\star )'$, we have by Sobolev embedding $(a_n b_n)_n \din\textnormal{L}^1_t(\textnormal{L}^r_{\xx})$ for some $r>1$, whence uniform absolute continuity in the $\xx$ variable, in the sense that 
\begin{align*}
\sup_{n\in\N} \int_{I\times E} |a_n b_n| \conv{\mu(E)}{0} 0,
\end{align*}
where $E$ denotes any measurable subset of $\Omega$ and $\mu$ the Lebesgue measure on $\R^d$. Now, we just pick a sequence of functions $(\theta_k)_k\in\mathscr{D}(\Omega)$ bounded by $1$, and equaling this value on a sequence of compact sets $K_k$ such as $\mu(\Omega \setminus K_k) \leq 1/k$. 

\bigskip
When $k$ is fixed, the sequences $(\theta_k a_n)_n$ and $(\theta_k b_n)_n$ (extended by $0$ outside $\Omega$) verifies all the assumptions of Proposition \ref{propo:compe:space}, whence (up to a subsequence) the expected weak convergence for the product $(\theta_k^2 a_n b_n)_n$ in $\mathscr{M}(I\times\overline{\Omega})-\star$. We eventually get \eqref{conv:compe} by writting, for any test function $\ffi\in\mathscr{C}^0(I\times\overline{\Omega})$  
\begin{align*}
\langle a_nb_n,\ffi \rangle =  \langle \theta_k^2 a_n b_n,  \ffi \rangle  + \langle a_nb_n,(1-\theta_k^2)\ffi \rangle, 
\end{align*}
since the second term of the r.h.s. is going to $0$ with $1/k$ uniformly in $n$, and one may extract diagonally allong the $k$'s to handle the first term of the r.h.s. $\qedhere$\end{proof}

\section{Proof of Theorem \ref{thm:deg}} \label{sec:degen}
As explained in the introduction, the equality
\begin{align*}
\nabla_{\xx} a_n = \frac{1}{\Phi'(a_n)} \nabla_{\xx} \Phi(a_n),
\end{align*}
may not be used to recover an estimate on $(\nabla_{\xx} a_n)_n$ using $(\nabla_{\xx} \Phi(a_n))_n\din\Ld^2(I\times\Omega)$ because such estimate degenerates when $a_n$ approaches a a critical point of $\Phi$. Replacing $(a_n)_n$ by a truncation like $\beta(a_n)$ where  $\beta$ is some smooth function erasing the critical values is of course a natural strategy. In this way, $\beta(a_n)$ will indeed have compactness in the space variable $\xx$ through a nice control of its gradient \emph{but}, all the information on the time variable will be lost : $\partial_t \beta(a_n) = \beta'(a_n) \partial_t a_n$ does not - generally - give a good bound, since we have only $(\partial_t a_n)_n \din\mathscr{M}(I;\H^{-m}(\Omega))$ and we cannot expect $\beta'(a_n) \din\mathscr{C}^0(I; \H^m(\Omega))$ ($m$ is arbitrarly large). But using Proposition \ref{propo:compe}, we may hope to pass weakly to the limit in $a_n \, \beta(a_n)$, an expression which is not far from being quadratic in $a_n$. 

\bigskip

Let us now write this in detail.

\begin{proof}[Proof of Theorem \ref{thm:deg}]
 All the coming facts and their proofs are true up to some (countable number of) extractions that we don't mention in the sequel. Denote by $(z_i)_{1\leq i\leq N}$ the set of critical points of $\Phi$. Take $\ep>0$ small enough so that the intervals $J_i^\ep:=[z_i-\ep,z_i+\ep]$ do not overlap, and denote by $J^\ep$ the reunion of these intervals. We may find a function $\beta_\ep\in\mathscr{C}^1(\R)$ such as $\beta_\ep(z) = z$  outside $J^\ep$, $\beta_\ep(z)= z_i + \Phi(z) -\Phi(z_i)$ on $J_i^{\ep/2}$, $\beta'_\ep\in\textnormal{L}^\infty(\R)$ and for a constant $C>0$ independent of $\ep$,
\begin{align}
\label{beta1}\|\beta_\ep - \Id\|_{\textnormal{L}^\infty(\R)} &\leq C \ep.
\end{align}
Now fix $\ep>0$ and write, for $a_n \notin \{z_i\,:1\leq i \leq N\}$
\begin{align*}
\nabla_{\xx} \beta_\ep(a_n) = \frac{\beta_\ep'(a_n)}{\Phi'(a_n)} \nabla_{\xx} \Phi(a_n).
\end{align*}
Since $\beta_\ep' \in\textnormal{L}^\infty(\R)$ and $|\Phi'|$ is lower bounded by a positive value outside $J^{\ep/2}$, we have  (with a bound depending on $\ep$)
\begin{align*}
(\mathds{1}_{a_n \notin J^{\ep/2}}\nabla_{\xx} \beta_\ep(a_n))_n \din\textnormal{L}^2(I\times\Omega).
\end{align*}
On the other hand, if $a_n \in J_i^{\ep/2}$, then by definition $\beta_\ep'(a_n) = \Phi'(a_n)$, so that 
\begin{align*}
\mathds{1}_{a_n \in J^{\ep/2}_i}\nabla_{\xx} \beta_\ep(a_n) &= \mathds{1}_{a_n \in J^{\ep/2}_i}\beta_\ep'(a_n) \nabla_{\xx} a_n  \\
&= \mathds{1}_{a_n \in J^{\ep/2}_i} \Phi'(a_n) \nabla_{\xx} a_n\\
& = \mathds{1}_{a_n \in J^{\ep/2}_i} \nabla_{\xx} \Phi(a_n) \din \textnormal{L}^2(I\times\Omega).
\end{align*}
At the end of the day we obtained $(\nabla_{\xx} \beta_\ep(a_n))_n \din\textnormal{L}^2(I\times\Omega)$. Using $(a_n)_n\din\textnormal{L}^2(I\times\Omega)$ and \eqref{beta1} we get $(\beta_\ep(a_n))_n \din\textnormal{L}^2(I\times\Omega)$. Let us denote by $a$ and $a^\ep$ the corresponding weak limits.  Since $(\partial_t a_n)_n\din\mathscr{M}(I;\H^{-m}(\Omega))$ we may use Proposition \ref{propo:compe} with $\mathds{1}\in\mathscr{C}^0(I\times\overline{\Omega})$ as a test function and get
\begin{align*}
 \int_{I\times\Omega} a_n \, \beta_\ep(a_n) \conv{n}{+\infty}  \int_{I\times\Omega} a_n \, a^\ep. 
\end{align*}
But because of \eqref{beta1}, we have 
\begin{align}
\label{beta:an}\| a_n -\beta_\ep(a_n) \|_{\textnormal{L}^2(I\times\Omega)} \leq C |I\times\Omega|^{1/2} \ep,
\end{align}
whence by weak lower semicontinuity
\begin{align}
\label{beta:a}\| a -a^\ep \|_{\textnormal{L}^2(I\times\Omega)} \leq C |I\times\Omega|^{1/2} \ep.
\end{align}
Now we may eventually write 
\begin{align*}
\int_{I\times\Omega} a_n^2 =  \int_{I\times\Omega} a_n \beta_\ep(a_n) + \int_{I\times\Omega} a_n (a_n-\beta_\ep(a_n)) ,
\end{align*} 
whence, using $(a_n)_n\din\textnormal{L}^2(I\times\Omega)$, the Cauchy-Schwarz inequality and \eqref{beta:an}
\begin{align*}
\operatorname*{\overline{\lim}}_{n\rightarrow\infty}\int_{I\times\Omega} a_n^2 \leq   \int_{I\times\Omega} a\, a^\ep +  C \ep,
\end{align*} 
where we changed the constant $C$ (still independent of $n$ and $\ep$). Using \eqref{beta:a} we have, changing $C$ again,
 \begin{align*}
\operatorname*{\overline{\lim}}_{n\rightarrow\infty}\int_{I\times\Omega} a_n^2 \leq   \int_{I\times\Omega} a^2 +  C \ep,
\end{align*} 
which allows to get the strong convergence of $(a_n)_n$ towards $a$.$\qedhere$
\end{proof}

\section{Proof of Theorem \ref{thm:aubin}}\label{sec:mov}

We recall (see subsection \ref{subsec:main}) the setting. We consider the open subset of $\R\times\R^d$
\begin{align*}
\hat{\Omega} := \bigcup_{a < t< b} \{t\} \times \Omega^t,
\end{align*}
where the family of connected bounded Lipschitz open sets $(\Omega^t)_{t}$ are given by the deformation of a Lipschitz, connected and bounded reference domain $\Omega\subset\R^d$: $\forall t\in (a,b)$ (closed bounded interval), $\Omega^t:= \mathcal{A}_t(\Omega)$, where for all $t$, $\mathcal{A}_t:\R^d\rightarrow\R^d$ is a $\mathscr{C}^1$-diffeomorphism. Recall also Assumption \ref{ass:A1} which supposes that $\Theta:(t,\xx)\mapsto \mathcal{A}_t(\xx)$  lie in $\mathscr{C}^0([a,b];\mathscr{C}^1(\R^d))$.

\vspace{2mm}

Under Assumption \ref{ass:A1} is satisfied, we have in particular the existence of two positive constants  $\alpha,\beta >0$  such as
\begin{align}
\label{ineq:jacob} \forall (t,\yy)\in [a,b] \times \Omega,\quad  \alpha \leq \J( \Theta)(t,\yy) \leq \beta,
\end{align}
where $\J(\Theta)$ is the Jacobian $\J(\Theta):(t,\yy)\mapsto |\det \nabla_{\yy} \mathcal{A}_t|(t,\yy)$. 

\vspace{2mm}

The previous estimate will allow us to use the change of variable $\xx=\mathcal{A}_t(\yy)$ to transport estimates from $\Omega$ to $\Omega_t$. For instance, when $p<d$, if $\Ss_\Omega$ is the Sobolev constant of the embedding $\W^{1,p}(\Omega)\hookrightarrow\textnormal{L}^{p^\star}(\Omega)$, we get the following uniform (in time) Sobolev estimate
\begin{align}
\label{ineq:sobot}\forall t\in[a,b],\quad\forall v\in\W^{1,p}(\Omega^t),\quad \| v\|_{\textnormal{L}^{p^\star}(\Omega^t)} &\leq \K_p \,\|v\|_{\W^{1,p}(\Omega^t)},
\end{align}
where $K_p := \Ss_\Omega\beta^{1/p^\star}\alpha^{-1/p}$. We obviously have a similar estimate when $p\geq d$, replacing the exponent $p^\star$ by any finite $q$ (and changing the constant). 


\subsection{Uniform properties for $\ep$-interior sets}\label{subsec:epint}
For $A$ a connected open set of $\R^d$, and $\ep\geq 0$ recall the definition of the $\ep$-interior of $A$ (see subsection \ref{subsec:not}), that we denote $A_\ep$. Since $\Omega$ is Lipschitz we have the elementary proposition
\begin{proposition}\label{propo:tube}
There exists $\gamma>0$ and $C_\gamma>0$ such as, for any $\ep\in[0,\gamma)$, the open set $\Omega_\ep$ has a Lipschitz boundary of constant at most $C_\gamma$.
\end{proposition}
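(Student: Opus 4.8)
The plan is to read off the Lipschitz regularity of $\Omega$ as a uniform \emph{two-sided cone condition}, and then to show that every inner parallel set $\Omega_\ep$ inherits such a condition with a half-aperture that does not degenerate as $\ep\to 0$; the uniform Lipschitz constant $C_\gamma$ is then just the cotangent of that aperture. Concretely, since $\partial\Omega$ is compact and Lipschitz I would first fix a finite atlas: finitely many cylinders in which, after a rigid motion with axis $e_d$, one has $\Omega=\{x_d>f(x')\}$ above the graph of an $L$-Lipschitz function $f$, with a common $L$. Setting $\theta_0:=\arctan(1/L)$ and writing $C^{\pm}$ for the open cones of half-aperture $\theta_0$ about $\pm e_d$ (of some height $h$), the bound $f(x'+w')\le f(x')+L|w'|$ gives, inside each chart, $x+C^{+}\subset\Omega$ for $x\in\overline\Omega$ and $x+C^{-}\subset\Omega^c$ for $x\in\overline{\Omega^c}$. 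I introduce $\rho(x):=\dd(x,\Omega^c)$, so that $\Omega_\ep=\{\rho>\ep\}$ and, for $\ep$ small, $\partial\Omega_\ep=\{\rho=\ep\}$ sits in an $O(\ep)$-neighbourhood of $\partial\Omega$, hence inside the fixed charts.

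The geometric heart of the argument is a \emph{normal-cone lemma}: if $z\in\Omega$ is close to $\partial\Omega$ and $q$ is a nearest boundary point, then the direction $\nu:=(z-q)/\rho(z)$ satisfies $\nu\cdot e_d\ge\sin\theta_0$. Indeed, the open ball $\B(z,\rho(z))$ lies in $\Omega$ and has $q$ on its boundary, while the open exterior cone $q+C^{-}$ lies in $\Omega^c$; disjointness of these two sets forces $w\cdot\nu\le 0$ for every $w\in C^{-}$, i.e. $\nu$ belongs to the polar cone of $C^{-}$, which is exactly the cone of half-aperture $\pi/2-\theta_0$ about $e_d$. Thus all nearest-point directions are trapped in a fixed cone around the chart axis, and this is the single place where the Lipschitz regularity of $\Omega$ is genuinely used.

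With $\theta_1:=\theta_0/2$, $\kappa:=\sin(\theta_0-\theta_1)>0$ and $C_1^{\pm}$ the cones of half-aperture $\theta_1$ about $\pm e_d$, I would then produce opposite cones for $\Omega_\ep$ at any $z$ with $\rho(z)=\ep$. \emph{Exterior cone.} For $w\in C_1^{-}$ with $0<|w|<2\kappa\ep$ the crude upper bound $\rho(z+w)\le|z+w-q|=|\rho(z)\nu+w|$ together with $\nu\cdot w\le-\kappa|w|$ (combine $\nu\cdot e_d\ge\sin\theta_0$ with $w\in C_1^{-}$) gives $|\rho(z)\nu+w|^2\le\ep^2-2\kappa\ep|w|+|w|^2<\ep^2$, so $z+w\in(\Omega_\ep)^c$. \emph{Interior cone.} For $w\in C_1^{+}$ small, let $q'$ be a nearest point to $z+w$ with direction $\nu'$; from $\ep=\rho(z)\le|z-q'|=|\rho(z+w)\nu'-w|$ and $\nu'\cdot w\ge\kappa|w|$ one gets $\rho(z+w)^2-2\kappa|w|\,\rho(z+w)+|w|^2-\ep^2\ge 0$, and continuity of $\rho$ in $w$ selects the upper root, which strictly exceeds $\ep$ for $0<|w|$ small; hence $z+w\in\Omega_\ep$. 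No second boundary point of $\Omega_\ep$ can then lie in either open cone, so any boundary displacement obeys $|(z_2-z_1)_d|\le\cot\theta_1\,|(z_2-z_1)'|$ whenever $|z_2-z_1|$ is below the cone height; chaining along the $x'$-segment removes that restriction, exhibiting $\partial\Omega_\ep$ in each chart as a graph with Lipschitz constant $\cot\theta_1$. One sets $C_\gamma:=\cot\theta_1$ and picks $\gamma>0$ so small that, for all $\ep\in[0,\gamma)$, $\Omega_\ep\neq\emptyset$, nearest points land in the charts, and the cone heights stay positive.

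The main obstacle is the interior cone, i.e. the \emph{lower} bound on $\rho$ along upward directions: contrary to the trivial upper bound $\rho\le|\,\cdot-q\,|$, it cannot be obtained by testing against a single boundary point and instead requires the normal-cone lemma to control the direction $\nu'$. I would stress one subtlety that looks dangerous but is harmless: the usable cone heights shrink like $\ep$, so the neighbourhood on which the graph description holds also shrinks. However, the half-aperture $\theta_1$ is fixed once and for all, so the resulting Lipschitz constant $C_\gamma=\cot\theta_1$ depends only on $\Omega$ (through $L$) and not on $\ep$, which is precisely the uniformity asserted in the statement.
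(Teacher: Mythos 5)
The paper never proves this proposition: it is introduced with ``Since $\Omega$ is Lipschitz we have the elementary proposition'', and the appendix only supplies proofs of Lemma \ref{lem:norm2} and Proposition \ref{propo:poinc}. So there is no in-paper argument to compare yours against; what matters is whether your argument stands on its own, and I believe it does. The structure is the standard and correct one: Lipschitz $\Rightarrow$ uniform two-sided cone condition with half-aperture $\theta_0=\arctan(1/L)$; the normal-cone lemma (polar of the exterior cone) trapping every nearest-point direction $\nu$ in the cone of half-aperture $\pi/2-\theta_0$ about the chart axis; and then the transfer of interior and exterior cones of the smaller aperture $\theta_1$ to every point of $\{\rho=\ep\}$, with the quadratic-in-$\rho(z+w)$ computation for the interior cone being exactly the nontrivial step (your root selection is fine, and can be made even more robust by noting that the lower root $X_-$ is negative for $|w|<\ep$, so $\rho(z+w)\geq X_+>\ep$ is forced without a continuity argument). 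The one point you should make fully explicit is in the interior-cone step: the nearest point $q'$ to $z+w$ need not coincide with $q$, and the inequality $\nu'\cdot w\geq\kappa|w|$ uses the normal-cone lemma at $q'$ \emph{with the same chart axis} $e_d$ as at $q$; this is legitimate only because $|q-q'|=O(\ep+|w|)$, so for $\gamma$ below the Lebesgue number of the atlas both nearest points lie in a common cylinder where the graph description (hence the exterior cone about $-e_d$) is valid. You have flagged this in your final choice of $\gamma$, so I regard it as a detail rather than a gap. The concluding passage from the two-sided cone condition at all points of $\{\rho=\ep\}$ to the graph description of $\partial\Omega_\ep$ (single-valuedness along vertical lines, chaining to remove the $O(\ep)$ height restriction) is standard and correctly indicated. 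Your observation that only the apertures, not the shrinking cone heights, enter the constant $C_\gamma=\cot\theta_1$ is precisely the uniformity the proposition asserts.
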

Using this Proposition, we can prove the following one 
\begin{proposition}\label{propo:poinc} Consider $\gamma$ the positive number defined in Proposition \ref{propo:tube}. Then, for $\ep\in[0,\gamma)$, the open sets $\Omega_\ep$ share a common Poincaré-Wirtinger constant, that is : there exists a positive constant $\Cc_{\Omega,\gamma}$ depending only on $\Omega$ and $\gamma$, such as for all $v\in\H^1(\Omega_\ep)$ having a vanishing mean-value, we have
\begin{align*}
 \|v\|_{\textnormal{L}^2(\Omega_\ep)} \leq \Cc_{\Omega,\gamma} \|\nabla v\|_{\textnormal{L}^2(\Omega_\ep)}.
\end{align*}
\end{proposition}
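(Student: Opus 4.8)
The plan is to argue by contradiction, reducing the uniform estimate to a compactness statement on a single fixed ambient set. Suppose no constant $\Cc_{\Omega,\gamma}$ works: then there exist $\ep_n\in[0,\gamma)$ and $v_n\in\H^1(\Omega_{\ep_n})$ with $\int_{\Omega_{\ep_n}}v_n=0$, $\|v_n\|_{\textnormal{L}^2(\Omega_{\ep_n})}=1$ and $\|\nabla v_n\|_{\textnormal{L}^2(\Omega_{\ep_n})}\to 0$. Since $[0,\gamma]$ is compact, I extract a subsequence with $\ep_n\to\ep_\star$; after possibly shrinking $\gamma$ beforehand I may assume each $\Omega_\ep$ ($\ep\in[0,\gamma]$) is connected and of positive measure, so that in particular $|\Omega_{\ep_\star}|>0$. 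The difficulty is that the $v_n$ live on \emph{different} domains, so Rellich cannot be invoked directly; this is exactly where Proposition \ref{propo:tube} enters.

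Because all the $\Omega_{\ep_n}$ share the same Lipschitz constant $C_\gamma$ and are contained in the fixed bounded set $\Omega$, the classical Stein extension theorem provides extension operators $E_n:\H^1(\Omega_{\ep_n})\to\H^1(\R^d)$ whose norms are bounded by a single constant depending only on $C_\gamma$, $d$ and the diameter of $\Omega$. Fixing a ball $B\supset\overline{\Omega}$ and setting $w_n:=E_nv_n$, the sequence $(w_n)_n$ is bounded in $\H^1(B)$; hence, up to a further extraction, $w_n\rightharpoonup w$ in $\H^1(B)$ and $w_n\to w$ strongly in $\textnormal{L}^2(B)$. This strong $\textnormal{L}^2$ convergence on the \emph{fixed} set $B$, obtained uniformly over the moving domains, is the key gain: it is what will prevent the mass of $v_n$ from escaping to the boundary.

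It remains to identify $w$ and reach the contradiction. For any $\eta>0$ the nesting $\Omega_{\ep_\star+\eta}\subset\Omega_{\ep_n}$ holds for $n$ large, so on the fixed set $U_\eta:=\Omega_{\ep_\star+\eta}$ one has $w_n=v_n$ and $\|\nabla v_n\|_{\textnormal{L}^2(U_\eta)}\to 0$; passing to the weak limit gives $\nabla w=0$ on $U_\eta$, and letting $\eta\downarrow 0$ yields $\nabla w=0$ on $\Omega_{\ep_\star}$. By connectedness, $w$ equals a constant $c$ there. Finally, since $w_n\to w$ in $\textnormal{L}^2(B)$ and $\mathds{1}_{\Omega_{\ep_n}}\to\mathds{1}_{\Omega_{\ep_\star}}$ in $\textnormal{L}^2(B)$ (the boundary being Lebesgue-null, so $|\Omega_{\ep_n}\triangle\Omega_{\ep_\star}|\to 0$), I pass to the limit in $\int_{\Omega_{\ep_n}}v_n=0$ and in $\int_{\Omega_{\ep_n}}v_n^2=1$ to get $c\,|\Omega_{\ep_\star}|=0$ and $c^2\,|\Omega_{\ep_\star}|=1$, which is absurd.

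The two genuine obstacles are precisely the places where uniformity of the geometry is used: transferring the sequence to a common space without losing compactness, resolved by the uniform extension bound furnished by Proposition \ref{propo:tube}; and ruling out loss of $\textnormal{L}^2$-mass near the moving boundary when passing to the limit in the normalisation and mean-zero conditions, resolved by the strong $\textnormal{L}^2(B)$ convergence together with $|\Omega_{\ep_n}\triangle\Omega_{\ep_\star}|\to 0$. The connectedness of the eroded sets is a minor but real point, needed so that a vanishing gradient forces a \emph{single} constant rather than one constant per component.
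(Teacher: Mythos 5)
Your argument is correct and is essentially the paper's own proof: both argue by contradiction, use Proposition \ref{propo:tube} to get Stein extension operators with uniformly bounded norms, apply Rellich--Kondrachov on a fixed ambient set, identify the limit as a constant on $\Omega_{\ep_\star}$ via the nesting of the eroded sets, and pass to the limit in the normalisation and mean-zero conditions using $|\Omega_{\ep_n}\triangle\Omega_{\ep_\star}|\to 0$. The only cosmetic difference is that the paper extracts a monotone subsequence of $(\ep_n)_n$ where you use the inclusions $\Omega_{\ep_\star+\eta}\subset\Omega_{\ep_n}$ and let $\eta\downarrow 0$.
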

\begin{rmk}
The dependence of the Poincaré-Wirtinger constant with respect to the domain of study $\Omega$ is a difficult subject which has led to numerous articles (see for instance the recent paper \cite{ruiz} and the reference therein) ; the literature thus covers quite widely our case, but we give in the appendix section \ref{sec:appe} a rather short and elementary proof in this particular case.
\end{rmk}

For time-space domains  we perform a slight abuse of notation (in contradiction with the one used for $\ep$-interior sets) :
\begin{align*}
\Omegah_\ep &:= \bigcup_{t\in I} \{t\} \times \mathcal{A}_t(\Omega_\ep),
\end{align*}
so that $\Omegah_\ep$ is defined by the motion of the $\ep$-interior set $\Omega_\ep$. We denote 
\begin{align*}
\partial\Omegah &:=  \bigcup_{a < t < b} \{t\} \times \partial\Omega^t,\\
\partial\Omegah_\ep &:= \bigcup_{a < t < b} \{t\} \times \mathcal{A}_t(\partial \Omega_\ep),
\end{align*}
which is again an abuse of notation:  $\partial\Omegah$ and $\partial\Omegah_\ep$ are not the boundaries (in $\R\times\R^d$) of respectively $\Omegah$ and $\Omegah_\ep$. 

\vspace{2mm}

Although $\mathcal{A}_t(\Omega_\ep)$ is not the $\ep$-interior  of $\Omega^t$, one has the following elementary Proposition

\begin{proposition}\label{propo:tubular} 
We have
\begin{enumerate}
\item For all $t\in[a,b]$ and all $\ep>0$, $\mathcal{A}_t(\partial\Omega_\ep)$ is the boundary in $\R^d$ of $\mathcal{A}_t(\Omega_\ep)$.
\item There exists $\eta \in (0,1]$ such as, for all $t\in[a,b]$,  and all $\ep>0$, $ \Omega^t_{\ep/\eta}\subset\mathcal{A}_t(\Omega_\ep) \subset \Omega^t_{\eta \ep}$. 
\item Uniformly in $t\in [a,b]$, $\ds\mu_d(\Omega^t\setminus\mathcal{A}_t(\Omega_\ep)) \conv{\ep}{0} 0$, where $\mu_d$ denotes the $d$-dimensional Lebesgue measure.
\end{enumerate}
\end{proposition}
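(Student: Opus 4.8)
The three assertions all reduce to one preliminary fact: \emph{uniform} (in $t$) bi-Lipschitz bounds for the family $(\mathcal{A}_t)_{t\in[a,b]}$ on a fixed neighbourhood of $\overline{\Omega}$. The plan is to fix a closed ball $\overline{B}$ containing $\overline{\Omega}$ and to produce constants $0<c\le K$, independent of $t$, with
\[
 c\,|\yy-\zz|\ \le\ |\mathcal{A}_t(\yy)-\mathcal{A}_t(\zz)|\ \le\ K\,|\yy-\zz|,\qquad \yy,\zz\in\overline{B},\ t\in[a,b].
\]
The upper bound is the easy half: by Assumption~\ref{ass:A1} the map $(t,\yy)\mapsto\nabla_{\yy}\mathcal{A}_t(\yy)$ is continuous on the compact set $[a,b]\times\overline{B}$, hence bounded, and writing $\mathcal{A}_t(\yy)-\mathcal{A}_t(\zz)=\int_0^1\nabla\mathcal{A}_t(\zz+s(\yy-\zz))(\yy-\zz)\,\dd s$ (the segment remaining in the convex set $\overline{B}$) gives the inequality with $K:=\sup_{[a,b]\times\overline{B}}\|\nabla\mathcal{A}_t\|$.

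The lower bound is the step I expect to be the main obstacle, since Assumption~\ref{ass:A1} provides no direct regularity for the inverse maps $\mathcal{A}_t^{-1}$. I would obtain it by contradiction and compactness. First, $\sigma_{\min}(\nabla\mathcal{A}_t(\yy))$ is continuous in $(t,\yy)$ and, since each $\mathcal{A}_t$ is a diffeomorphism (so $\det\nabla\mathcal{A}_t\ne0$, compatibly with \eqref{ineq:jacob}), strictly positive on the compact $[a,b]\times\overline{B}$; hence it is bounded below by some $\kappa>0$, and a first-order Taylor expansion together with the uniform continuity of $\nabla\mathcal{A}_t$ yields the infinitesimal bound $|\mathcal{A}_t(\yy)-\mathcal{A}_t(\zz)|\ge(\kappa/2)|\yy-\zz|$ whenever $|\yy-\zz|\le\delta$ for a uniform $\delta>0$. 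Now if the global lower bound failed, there would exist $t_n\to t$, $\yy_n\to\yy$, $\zz_n\to\zz$ in $\overline{B}$ with $|\mathcal{A}_{t_n}(\yy_n)-\mathcal{A}_{t_n}(\zz_n)|/|\yy_n-\zz_n|\to0$: the case $\yy\ne\zz$ contradicts injectivity of $\mathcal{A}_t$ and the joint continuity of $\Theta$, while the case $\yy=\zz$ eventually satisfies $|\yy_n-\zz_n|\le\delta$ and contradicts the infinitesimal bound. This produces the uniform $c>0$.

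Granting the bi-Lipschitz bounds, the rest is short. As a homeomorphism of $\R^d$, each $\mathcal{A}_t$ preserves boundaries, i.e. $\mathcal{A}_t(\partial A)=\partial\mathcal{A}_t(A)$ for every set $A$ (write $\partial A=\overline{A}\cap\overline{A^c}$ and use that a homeomorphism commutes with closure and complement); applied to the open set $A=\Omega_\ep$ this is exactly the first assertion, and applied to $A=\Omega$ it gives the identity $\partial\Omega^t=\mathcal{A}_t(\partial\Omega)$ used below. For the second assertion, take $\xx=\mathcal{A}_t(\yy)\in\mathcal{A}_t(\Omega_\ep)$; using $\partial\Omega^t=\mathcal{A}_t(\partial\Omega)$ and taking the infimum of the bi-Lipschitz inequalities over $\zz\in\partial\Omega$,
\[
 \dd(\xx,\partial\Omega^t)=\inf_{\zz\in\partial\Omega}|\mathcal{A}_t(\yy)-\mathcal{A}_t(\zz)|,\qquad c\,\dd(\yy,\partial\Omega)\le\dd(\xx,\partial\Omega^t)\le K\,\dd(\yy,\partial\Omega).
\]
The left inequality with $\dd(\yy,\partial\Omega)>\ep$ gives $\dd(\xx,\partial\Omega^t)>c\,\ep$, so $\mathcal{A}_t(\Omega_\ep)\subset\Omega^t_{c\ep}$; conversely, for $\xx\in\Omega^t_{\ep/\eta}$ the right inequality gives $\dd(\mathcal{A}_t^{-1}(\xx),\partial\Omega)\ge\dd(\xx,\partial\Omega^t)/K>\ep/(K\eta)$, and since $\mathcal{A}_t^{-1}(\xx)\in\Omega$ this forces $\mathcal{A}_t^{-1}(\xx)\in\Omega_\ep$ as soon as $K\eta\le1$. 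Choosing $\eta:=\min(1,c,1/K)\in(0,1]$ (which depends only on $\Omega$, $\overline{B}$ and the family, not on $\ep$ or $t$) makes both $\eta\le c$ and $K\eta\le1$ hold, giving $\Omega^t_{\ep/\eta}\subset\mathcal{A}_t(\Omega_\ep)\subset\Omega^t_{\eta\ep}$ uniformly in $t$.

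Finally, the third assertion follows from the change of variables $\xx=\mathcal{A}_t(\yy)$ and \eqref{ineq:jacob}. Since $\mathcal{A}_t$ is a bijection and $\Omega_\ep\subset\Omega$, one has $\Omega^t\setminus\mathcal{A}_t(\Omega_\ep)=\mathcal{A}_t(\Omega\setminus\Omega_\ep)$, whence
\[
 \mu_d(\Omega^t\setminus\mathcal{A}_t(\Omega_\ep))=\int_{\Omega\setminus\Omega_\ep}\J(\Theta)(t,\yy)\,\dd\yy\ \le\ \beta\,\mu_d(\Omega\setminus\Omega_\ep).
\]
The right-hand side is independent of $t$, and the boundary layer $\Omega\setminus\Omega_\ep=\{\yy\in\Omega:\dd(\yy,\partial\Omega)\le\ep\}$ decreases to the empty set as $\ep\downarrow0$; since $\mu_d(\Omega)<\infty$, continuity of the measure from above yields $\mu_d(\Omega\setminus\Omega_\ep)\to0$, which gives the claimed uniform convergence and completes the proof.
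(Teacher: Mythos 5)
Your proof is correct and follows the same route as the paper: boundary preservation under the homeomorphism $\mathcal{A}_t$ for point 1, a uniform bi-Lipschitz estimate for the family $(\mathcal{A}_t)_t$ combined with $\dd(\cdot,\partial\Omega)$ comparisons for point 2, and the Jacobian bound \eqref{ineq:jacob} plus exhaustion of $\Omega$ by the $\Omega_\ep$ for point 3. The only difference is one of detail: the paper merely asserts the uniform bi-Lipschitz property as a consequence of Assumption \ref{ass:A1}, whereas you prove it (correctly) via compactness of $[a,b]\times\overline{B}$ and a contradiction argument for the lower bound.
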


\begin{proof}
\begin{enumerate}
\item This is true because $\mathcal{A}_t$ is a homeomorphism of $\R^d$. 
\item Thanks to Assumption \ref{ass:A1}, the family of diffeomorphisms $(\mathcal{A}_t)_{t\in[a,b]}$ is uniformly bilipschitz on a bounded neighborhood of $\Omega$. In particular, we have the existence of a positive constant $K\geq 1$ such as, for all $t\in[a,b]$ and all $\xx,\yy\in\Omega$
\begin{align*}
\frac{1}{K}|\xx-\yy| \leq |\mathcal{A}_t(\xx)-\mathcal{A}_t(\yy)| \leq K |\xx-\yy|,
\end{align*}
and one checks that $\eta:=1/K$ does the job.
\item By bijectivity, we have $\Omega^t\setminus \mathcal{A}_t(\Omega_\ep) = \mathcal{A}_t(\Omega \setminus \Omega_\ep)$. Recall estimate \eqref{ineq:jacob}.\\
We hence have $\mu_d(\Omega^t\setminus \mathcal{A}_t(\Omega_\ep)) \leq \beta \mu_d(\Omega\setminus \Omega_\ep)$ which goes to $0$ with $\ep$, by inner regularity of the Lebesgue measure. $\qedhere$
\end{enumerate}
\end{proof}

We end this subsection with the following time-dependent counterpart of  Proposition \ref{propo:poinc}

\begin{proposition}\label{propo:poinc:t} Consider $\gamma$ the positive number defined in Proposition \ref{propo:tube}. Then, for $\ep\in[0,\gamma)$, and $t\in[a,b]$, the open sets $\mathcal{A}_t(\Omega_\ep)$ share a common Poincaré-Wirtinger constant $\Cc_{\Omega,\gamma}^{\mathcal{A}}$ (in the sense of Proposition \ref{propo:poinc}).
\end{proposition}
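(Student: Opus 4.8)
The goal is to establish a uniform Poincaré--Wirtinger constant for the family $\mathcal{A}_t(\Omega_\ep)$, combining the already-known uniform constant for the sets $\Omega_\ep$ (Proposition \ref{propo:poinc}) with the uniform bilipschitz control on the diffeomorphisms $\mathcal{A}_t$ coming from Assumption \ref{ass:A1}.

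\medskip

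The plan is to transport the inequality back to the reference-type domain $\Omega_\ep$ via the change of variables $\xx = \mathcal{A}_t(\yy)$. First I would take $v \in \H^1(\mathcal{A}_t(\Omega_\ep))$ and set $w := v \circ \mathcal{A}_t \in \H^1(\Omega_\ep)$; by the chain rule $\nabla_{\yy} w = {}^{t}\nabla_{\yy}\mathcal{A}_t \,(\nabla_{\xx} v)\circ\mathcal{A}_t$, so $w$ is indeed $\H^1$ since $\Theta \in \mathscr{C}^0([a,b];\mathscr{C}^1(\R^d))$ bounds $\nabla_{\yy}\mathcal{A}_t$ uniformly. The subtlety is that Proposition \ref{propo:poinc} requires a \emph{vanishing mean-value}, but $w$ will generally not have zero mean even if $v$ does, because the Jacobian weight distorts the measure. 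I would therefore apply the Poincaré--Wirtinger inequality not to $w$ but to $w - \overline{w}$, where $\overline{w}$ is the mean of $w$ over $\Omega_\ep$, which is legitimate since $\nabla_{\yy}(w-\overline{w}) = \nabla_{\yy} w$. This gives $\|w - \overline{w}\|_{\textnormal{L}^2(\Omega_\ep)} \leq \Cc_{\Omega,\gamma}\|\nabla_{\yy} w\|_{\textnormal{L}^2(\Omega_\ep)}$, with $\Cc_{\Omega,\gamma}$ uniform in $\ep \in [0,\gamma)$ by Proposition \ref{propo:poinc}.

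\medskip

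Next I would push this estimate back to $\mathcal{A}_t(\Omega_\ep)$. The left-hand side controls $\|v - \overline{w}\circ\mathcal{A}_t^{-1}\|$, and the key point is that replacing the constant $\overline{w}$ by the \emph{optimal} constant only decreases the $\textnormal{L}^2$ norm: for any $v \in \H^1$, $\inf_{c\in\R}\|v-c\|_{\textnormal{L}^2} \leq \|v - \overline{w}\circ\mathcal{A}_t^{-1}\|_{\textnormal{L}^2}$, and the infimum is attained precisely when $c$ is the mean of $v$, so $\|v - \overline{v}\|_{\textnormal{L}^2(\mathcal{A}_t(\Omega_\ep))}$ is bounded by the right-hand side. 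Using the Jacobian bounds \eqref{ineq:jacob} to compare integrals on $\Omega_\ep$ with integrals on $\mathcal{A}_t(\Omega_\ep)$, and the uniform bound on $\nabla_{\yy}\mathcal{A}_t$ to relate $\nabla_{\yy} w$ to $(\nabla_{\xx} v)\circ\mathcal{A}_t$, I would arrive at
\begin{align*}
\|v - \overline{v}\|_{\textnormal{L}^2(\mathcal{A}_t(\Omega_\ep))} \leq \Cc_{\Omega,\gamma}^{\mathcal{A}} \|\nabla_{\xx} v\|_{\textnormal{L}^2(\mathcal{A}_t(\Omega_\ep))},
\end{align*}
where $\Cc_{\Omega,\gamma}^{\mathcal{A}}$ aggregates $\Cc_{\Omega,\gamma}$, the Jacobian constants $\alpha,\beta$, and the uniform Lipschitz norm of $\Theta$ and its spatial gradient.

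\medskip

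The main obstacle, as flagged above, is the mean-value bookkeeping: the change of variables does not preserve the zero-mean condition, so one cannot apply Proposition \ref{propo:poinc} to $v\circ\mathcal{A}_t$ naively. The clean way around this is the characterization of the Poincaré--Wirtinger inequality through the optimal subtracted constant ($\inf_c \|v-c\|_{\textnormal{L}^2}$ is achieved at the mean and is comparable to the zero-mean norm), which lets one freely pass between means computed with respect to different measures at the cost of a harmless constant factor. Once this is handled, the remaining steps are routine estimates using \eqref{ineq:jacob} and the uniform-in-$t$ bounds on $\mathcal{A}_t$ and its inverse granted by Assumption \ref{ass:A1}, all of which are independent of both $t$ and $\ep$.
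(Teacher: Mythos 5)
Your proof is correct and follows essentially the same route as the paper: transport the function to $\Omega_\ep$ via $v:=u\circ\mathcal{A}_t$, apply Proposition \ref{propo:poinc} there, and change variables back using the Jacobian bounds \eqref{ineq:jacob} and the uniform bound on $\nabla\mathcal{A}_t$ from Assumption \ref{ass:A1}. You are in fact more careful than the paper on one point: the paper's proof applies Proposition \ref{propo:poinc} directly to $u\circ\mathcal{A}_t$ without observing that the vanishing-mean condition is not preserved under the change of variables (the Jacobian reweights the measure), whereas your detour through $\inf_{c}\|v-c\|_{\textnormal{L}^2}$, attained at the mean, correctly closes that small gap.
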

\begin{proof}
Fix $t\in[a,b]$, $\ep\in[0,\gamma)$ and $u\in\H^1(\mathcal{A}_t(\Omega_\ep))$. Since $\mathcal{A}_t$ is bilipschitz, $v:=u\circ\mathcal{A}_t \in \H^1(\Omega_\ep)$. Thanks to Proposition \ref{propo:poinc} we have 
\begin{align*}
\int_{\Omega_\ep} |v(\yy)|^2\, \dd \yy \leq C_{\Omega,\gamma}^2 \int_{\Omega_\ep} |\nabla v(\yy)|^2 \, \dd\yy.
\end{align*}
Considering the change of variable $\yy=\mathcal{A}_t^{-1}(\xx)$ in the previous inequality, together with estimate \eqref{ineq:jacob} we get
\begin{align*}
\frac{1}{\beta}\int_{\mathcal{A}_t(\Omega_\ep)} |u(\xx)|^2\, \dd \xx \leq \frac{C_{\Omega,\gamma}^2}{\alpha} \int_{\mathcal{A}_t(\Omega_\ep)} |(\nabla v\circ\mathcal{A}_t^{-1})(\xx)|^2 \, \dd\xx.
\end{align*}
But from $v=u\circ\mathcal{A}_t$ we deduce for all $\yy\in\Omega_\ep$, $|\nabla v (\yy)| \leq \|\nabla \mathcal{A}_t\|_{\Ld^\infty(\Omega)} |(\nabla u \circ\mathcal{A}_t)|(\yy) $. Because of Assumption \ref{ass:A1}, we have $\|\nabla \mathcal{A}_t\|_{\Ld^\infty(\Omega)}\leq \|\nabla_{\xx}\Theta\|_{\Ld^\infty([a,b]\times\Omega)}$, so that eventually 
\begin{align*}
\|u\|_{\Ld^2(\mathcal{A}_t(\Omega_\ep))} \leq \sqrt{\frac{\beta}{\alpha}} C_{\Omega,\gamma}\|\nabla_{\xx}\Theta\|_{\Ld^\infty([a,b]\times\Omega)} \|\nabla u\|_{\Ld^2(\mathcal{A}_t(\Omega_\ep)},
\end{align*}
and the proof is over taking $\Cc_{\Omega,\gamma}^{\mathcal{A}} := \beta^{1/2}\alpha^{-1/2} C_{\Omega,\gamma}\|\nabla_{\xx}\Theta\|_{\Ld^\infty([a,b]\times\Omega)}$. $\qedhere$
\end{proof}

\subsection{The easy case}\label{subsec:easy}

 The assumptions of Theorem \ref{thm:aubin:gen} below are the direct generalization of the usual framework in which the Aubin-Lions Lemma is frequently invoked : strong estimate (positive Sobolev) for the space variable, weak estimate (negative Sobolev) for the time variable. Since the domain is non-cylindrical, we translate the weak assumption into its variationnal formulation \eqref{ineq:lem:gen}. Though this result may clearly be generalized, we intentionnally state it in a rather particular case, to show the simplicity of the argument used.

\begin{theorem}\label{thm:aubin:gen} Let $1 \leq p < \infty$ and $(f_n)_n$ a sequence of functions such as  $(f_n)_n \dot{\,\in\,} \textnormal{L}^p(\hat{\Omega})$ and $(\nabla_\xx f_n)_n \dot{\,\in\,} \textnormal{L}^p(\hat{\Omega})$. We assume the existence of a constant $\Cc>0$ and an integer $N\in\N$ such as, for any test function $\psi \in\mathscr{D}(\hat{\Omega})$,
\begin{align}
\label{ineq:lem:gen}\left|\langle  \partial_{t} f_n, \psi\rangle \right| \leq \Cc \sum_{|\alpha|\leq N} \|\partial_{\xx}^\alpha\psi\|_{\textnormal{L}^2(\hat{\Omega})}.
\end{align}
Then, for any $\ep>0$, one has $(f_n)_n\ddot{\,\in\,}\Ld^p(\hat{\Omega}_\ep)$.
\end{theorem}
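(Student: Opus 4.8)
The plan is to prove relative compactness in $\textnormal{L}^p(\hat{\Omega}_\ep)$ through the Riesz–Fréchet–Kolmogorov criterion, after extending each $f_n$ by zero outside $\hat{\Omega}$. Since $\hat{\Omega}$ is bounded, tightness is automatic, so everything reduces to uniform (in $n$) moduli of continuity for the translates: a spatial one $\sup_n\|\tau_\hh f_n - f_n\|_{\textnormal{L}^p(\hat{\Omega}_\ep)}\to 0$ as $|\hh|\to 0$, a temporal one $\sup_n\|\lambda_\sigma f_n - f_n\|_{\textnormal{L}^p(\hat{\Omega}_\ep)}\to 0$ as $\sigma\to 0$, together with a uniform smallness of the $\textnormal{L}^p$ mass in the thin boundary strips of $\hat{\Omega}_\ep$ created by the zero extension. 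The geometric input is exactly subsection \ref{subsec:epint}: fixing $\ep$ and using Proposition \ref{propo:tubular}, each slice $\mathcal{A}_t(\Omega_\ep)$ lies at distance at least $\eta\ep$ from $\partial\Omega^t$, providing a uniform spatial buffer, while Assumption \ref{ass:A1} gives a uniform-in-$t$ modulus for the motion $t\mapsto\Omega^t$, hence a temporal buffer. Together they make spatial translations by $|\hh|<\eta\ep$, small time shifts, and spatial mollification all legitimate operations that only ever read genuine values of $f_n$ inside $\hat{\Omega}$.

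The spatial modulus is then immediate: for $|\hh|<\eta\ep$ the segment from $\xx$ to $\xx-\hh$ stays in $\Omega^t$ for every $(t,\xx)\in\hat{\Omega}_\ep$, so $\|\tau_\hh f_n - f_n\|_{\textnormal{L}^p(\hat{\Omega}_\ep)}\leq|\hh|\,\sup_n\|\nabla_\xx f_n\|_{\textnormal{L}^p(\hat{\Omega})}$, which suffices since $(\nabla_\xx f_n)_n\din\textnormal{L}^p(\hat{\Omega})$. The heart of the argument, and its only genuinely new point, is the temporal modulus, obtained by spatial mollification combined with the dual estimate \eqref{ineq:lem:gen}. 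I introduce $g_n^k:=f_n\star\ffi_k$ (convolution in $\xx$ only, with $1/k<\eta\ep$). On one hand the standard mollification bound gives $\sup_n\|f_n-g_n^k\|_{\textnormal{L}^p(\hat{\Omega}_\ep)}\leq C/k$, again using only $(\nabla_\xx f_n)_n\din\textnormal{L}^p(\hat{\Omega})$. On the other hand, for fixed $k$ I claim $(\partial_t g_n^k)_n$ is bounded in $\textnormal{L}^\infty(\hat{\Omega}_\ep)$ uniformly in $n$: for $\chi\in\mathscr{D}(\hat{\Omega}_\ep)$ the function $\chi\star\ffi_k$ lies in $\mathscr{D}(\hat{\Omega})$, and since $\ffi_k$ is even one has $\langle\partial_t g_n^k,\chi\rangle=\langle\partial_t f_n,\chi\star\ffi_k\rangle$. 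Feeding $\psi=\chi\star\ffi_k$ into \eqref{ineq:lem:gen} and estimating each term by Young's inequality in the form $\|\chi\star\partial_\xx^\alpha\ffi_k\|_{\textnormal{L}^2}\leq\|\chi\|_{\textnormal{L}^1}\|\partial_\xx^\alpha\ffi_k\|_{\textnormal{L}^2}$ yields $|\langle\partial_t g_n^k,\chi\rangle|\leq\tilde{C}_k\|\chi\|_{\textnormal{L}^1(\hat{\Omega})}$ with $\tilde{C}_k$ independent of $n$. By duality this is the bound $\|\partial_t g_n^k\|_{\textnormal{L}^\infty(\hat{\Omega}_\ep)}\leq\tilde{C}_k$.

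Consequently $g_n^k$ is $\tilde{C}_k$-Lipschitz in time, so $\sup_n\|\lambda_\sigma g_n^k-g_n^k\|_{\textnormal{L}^p(\hat{\Omega}_\ep)}\leq C_k|\sigma|$, and the triangle inequality $\|\lambda_\sigma f_n-f_n\|_p\leq 2\|f_n-g_n^k\|_p+\|\lambda_\sigma g_n^k-g_n^k\|_p\leq 2C/k+C_k|\sigma|$ gives the temporal modulus on choosing $k$ large and then $\sigma$ small. The boundary strips I would control with the same mollified functions: near the spatial boundary the pointwise bound $\|g_n^k(t)\|_{\textnormal{L}^\infty_\xx}\leq\|\ffi_k\|_{\textnormal{L}^{p'}}\|f_n(t)\|_{\textnormal{L}^p}$, times the (small, uniform in $t$) measure of the strip and integrated in $t$, tends to $0$ with the strip width; near the temporal endpoints the $\textnormal{L}^\infty$ time-derivative bound upgrades $g_n^k$ to a uniform $\sup_t$-$\textnormal{L}^p$ bound, so the mass in a time-strip of width $\sigma$ is $O(\sigma)$. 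In both cases one first absorbs $f_n-g_n^k$ by $C/k$ and then shrinks the strip. Assembling the two moduli and the boundary smallness, Riesz–Fréchet–Kolmogorov yields $(f_n)_n\ddin\textnormal{L}^p(\hat{\Omega}_\ep)$.

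I expect the main obstacle to lie entirely in the time direction on the non-cylindrical domain: not the compactness machinery, but the bookkeeping that makes the mollification and the shifts admissible (the role of the buffer estimates of subsection \ref{subsec:epint} and of Assumption \ref{ass:A1}), and above all the passage from the purely abstract dual bound \eqref{ineq:lem:gen} to an honest pointwise-in-time estimate for $\partial_t(f_n\star\ffi_k)$, which is exactly what the even-mollifier identity together with Young's inequality is designed to deliver. Notably, this route never invokes the classical Aubin–Lions lemma nor any slicing of $\hat{\Omega}$ into cylinders.
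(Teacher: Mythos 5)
Your proof is correct, and its engine is the same as the paper's: the even-mollifier identity $\langle \partial_t(f_n\star\ffi_k),\chi\rangle=\langle\partial_t f_n,\chi\star\ffi_k\rangle$ fed into the dual estimate \eqref{ineq:lem:gen} to bound $\partial_t(f_n\star\ffi_k)$ for each fixed $k$, combined with $\sup_n\|f_n-f_n\star\ffi_k\|_{\textnormal{L}^p}\to 0$ from the gradient bound, with the $\eta\ep$ spatial buffer of Proposition \ref{propo:tubular} making the mollification admissible. Where you diverge is the conclusion. The paper never translates in time: for fixed $\ell$ it observes that $(f_n\star\ffi_\ell)_n\din\W^{1,p}(\Omegah_{1/m})$, applies Rellich--Kondrachov, extracts diagonally in $\ell$, and closes with the three-term Cauchy decomposition \eqref{resum:preuve}. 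You instead run Riesz--Fr\'echet--Kolmogorov on $(f_n)_n$ itself, which forces you to build a temporal translation modulus and to control boundary strips --- precisely the Kruzhkov strategy that the paper explicitly acknowledges in the remark following Theorem \ref{thm:aubin:gen} and defers to the harder Theorem \ref{thm:aubin}. Your route costs the endpoint/strip bookkeeping (which you handle correctly, if tersely), but buys independence from Rellich--Kondrachov on the non-cylindrical set $\Omegah_{\ep}$, whose use quietly requires that set to be an extension domain in $\R^{1+d}$, a point the paper leaves implicit. One shared imprecision: since the convolution is in $\xx$ only, Young's inequality slice-by-slice gives $\|\chi\star\partial_{\xx}^\alpha\ffi_k\|_{\textnormal{L}^2(\hat\Omega)}\leq\|\partial_{\xx}^\alpha\ffi_k\|_{\textnormal{L}^2(\R^d)}\,\|\chi\|_{\textnormal{L}^2(I;\textnormal{L}^1_{\xx})}$ rather than $\dots\|\chi\|_{\textnormal{L}^1(\hat\Omega)}$, so the honest conclusion is $(\partial_t(f_n\star\ffi_k))_n\din\textnormal{L}^2(I;\textnormal{L}^\infty_{\xx})$ rather than $\textnormal{L}^\infty(\hat\Omega_\ep)$; this still yields an $O(|\sigma|^{1/2})$ time modulus and changes nothing, and the paper's own proof makes the identical leap.
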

\begin{rmk}
The proof below applies both in the cylindrical and the non-cylindral case. The statement would still be true  replacing the assumption on the gradient by some uniform equicontinuity in the $\xx$ variable. In fact in this latter form, we recover Kruzhkov's lemma (see \cite{kru} and also \cite{and} for a more recent presentation). The proof of Kruzhkov used also the uniform approximation by convolution, but Kruzhkov used it to obtain then uniform equicontinuity in the time variable, and the proof below is a bit different. However we will use Kruzhkov's strategy in the more intricate case of Navier-Stokes' estimates (see subsection \ref{subsec:nav}).
\end{rmk}
\begin{rmk}\label{rmk:ass}
This proof does not use the full strength of Assumption \ref{ass:A1}, the only important thing is that $\Omegah$ is connected and bounded. Reproducing the arguments below in a similar way, we could replace $\Omegah$ by any open set $O\subset\R^m_{\yy}\times\R^d_{\xx}$, and assumption \eqref{ineq:lem:gen} by a similar one on $(\nabla_{\yy} f_n)_n$, and get $(f_n)_n\ddin\Ll^p(O)$.
\end{rmk} 
\begin{proof}
Recall the positive number $\eta>0$ defined in point 2. of Proposition \ref{propo:tubular}. We only need to prove that for all integer $m\in\N$, $(f_n)_n\ddin\textnormal{L}^p(\Omegah_{1/m})$ : the conclusion will follow by a standard diagonal extraction, since $\Omegah_\ep \subset\Omegah_{1/m}$ as soon as $1/m < \ep$.

\vspace{2mm}
We recall the sequence $(\ffi_\ell)_\ell$  of nonnegative even mollifiers (in space only) : $\ffi_\ell(\xx) := \ell^{d}\ffi( \ell \xx)$, where $\ffi$ is some smooth even nonnegative function with support in the unit ball of $\R^d$. In the sequel the convolution $\star$ has to be understood in the space variable $\xx$ only.

\vspace{2mm}

  For all $t$,  $\mathcal{A}_t(\Omega_{1/m})\subset\Omega_{\eta /m}^t$  (see the definition of $\eta$ in Proposition \ref{propo:tubular}). Since $f_n$ is only defined in $\Omegah$, $f_n \star\ffi_\ell$ is well-defined only in a subset of $\Omegah$. Typically, if $\ell \geq 2m/\eta$ (and we will assume this from now on),  $f_n\star\ffi_\ell$ is well-defined in $\Omegah_{1/m}$. In that case,  for any $\psi\in\D(\Omegah_{1/m})$, $\psi\star\ffi_\ell \in\D(\Omegah)$. Since $(\nabla_{\xx} f_n)_n\din\textnormal{L}^p(\Omegah)$ we have the standard estimate 
\begin{align}
\label{limunif}\lim_{\ell \rightarrow +\infty} \sup_{n\in\N} \|f_n - f_n\star\rho_{\ell} \|_{\textnormal{L}^p(\Omegah_{1/m})} = 0.
\end{align}
Now fix $\ell\geq 2m/\eta$.

Since $(f_n)_n\din\textnormal{L}^p(\Omegah)$, we have $(f_n\star\ffi_\ell)_n \din\textnormal{L}^p(\Omegah_{1/m})$ and $(\nabla_{\xx} f_n\star\ffi_\ell)_n \din\textnormal{L}^p(\Omegah_{1/m})$. For the time derivative we just write for all $\psi\in\mathscr{D}(\Omegah_{1/m})$, using the fact that $\ffi_\ell$ is even,
\begin{align*}
\langle \partial_t (f_n \star\ffi_\ell) , \psi \rangle = \langle \partial_t f_n, \psi\star \ffi_\ell \rangle.
\end{align*}
Now (since $\ell$ is large enough), we have $\ffi_{\ell}\star\psi \in\D(\Omegah)$ and it is hence an admissible test-function for the estimate \eqref{ineq:lem:gen}. Eventually, for any $\psi\in\mathscr{D}(\Omegah_{1/m})$, we have
\begin{align*}
\left|\langle  \partial_{t} (f_n \star \ffi_\ell), \psi\rangle \right| &\leq \Cc \sum_{|\alpha|\leq N} \|\partial_{\xx}^\alpha(\ffi_\ell\star \psi)\|_{\textnormal{L}^2(\hat{\Omega})}\\
&\leq \Cc_{\ffi_\ell} \|\psi\|_{\textnormal{L}^1(\hat{\Omega})},
\end{align*}
from which we deduce by duality that $(\partial_t(f_n\star\ffi_\ell))_n \din\textnormal{L}^{\infty}(\Omegah_{1/m})$. We in particular obtain that, for any fixed $\ell$, ${(f_n \star\ffi_\ell)_n \dot{\,\in\,} \W^{1,p}(\Omegah_{1/m})}$ whence $(f_n \star\ffi_\ell)_n \ddot{\,\in\,} \textnormal{L}^{p}(\Omegah_{1/m})$ by Rellich-Kondrachov's theorem
We then extract diagonally (without reindexing) in order to have, for any $\ell$, the convergence of $(f_n\star\ffi_\ell)_n$, in $\textnormal{L}^p(\Omegah_{1/m})$. Now we may conclude by writting for any $n,q\in\N$
\begin{align}
\label{resum:preuve}f_n -f_q = (f_n -f_n\star\ffi_\ell)  + (f_n\star\ffi_\ell -f_q\star\ffi_\ell) + (f_q\star\ffi_\ell -f_q).
\end{align}
When $\ell$ is fixed, since $(f_n\star\ffi_\ell)_n$ is converging in $\textnormal{L}^p(\Omegah_{1/m})$, the second term of the r.h.s. goes to $0$ in $\textnormal{L}^p(\Omegah_{1/m})$ when $\min(n,q)\rightarrow +\infty$. The two other ones go both to $0$ in the same space as $\ell\rightarrow +\infty$, uniformly in $n,q$ thanks to \eqref{limunif}. The considered extraction is hence a Cauchy sequence in $\textnormal{L}^p(\Omegah_{1/m})$, and converges in this space.   $\qedhere$
\end{proof}

The missing convergence in $\Ld^p(\Omegah)$, concentrated on the peel $\Omegah\setminus\Omegah_\ep$, can be in fact recovered : all we need is some $\Ld^p$-equi-continuity of the sequence $(f_n)_n$ on the sets $\Omegah\setminus\Omegah_\ep$. In order to do so, we use the regularity Assumption \ref{ass:A1} through estimate \eqref{ineq:sobot} and Proposition \ref{propo:tubular} proven above. We then have the following local to global Proposition
\begin{proposition}\label{propo:l2g}
Fix $p\in[1,\infty[$. Assume that $(f_n)_n$,  $(\nabla_{\xx} f_n)_n$ are both bounded in $\Ld^p(\Omegah)$. If for all $\ep>0$, one has $(f_n)_n\ddin\textnormal{L}^p(\Omegah_\ep)$ (local compactness), then $(f_n)_n\ddin\Ld^p(\Omegah)$ (global compactness).
\end{proposition}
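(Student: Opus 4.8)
The plan is to combine the local compactness hypothesis with a \emph{uniform collar estimate}, showing that the contribution of the peel $\Omegah\setminus\Omegah_\ep$ to the $\Ld^p$ norm vanishes uniformly in $n$ as $\ep\to 0$. Concretely, the heart of the matter is the uniform tail bound
\begin{align*}
\lim_{\ep\to 0}\ \sup_{n\in\N}\ \|f_n\|_{\Ld^p(\Omegah\setminus\Omegah_\ep)} = 0.
\end{align*}
Once this is granted, the local compactness on each $\Omegah_{1/m}$ can be patched into global compactness by a routine diagonal/Cauchy argument, exactly as in the proof of Theorem \ref{thm:aubin:gen}.

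To establish the tail bound I would use the gradient control to gain integrability. Since $(f_n)_n$ and $(\nabla_\xx f_n)_n$ are bounded in $\Ld^p(\Omegah)$, the uniform-in-time Sobolev estimate \eqref{ineq:sobot} gives a uniform bound for $(f_n)_n$ in $\Ld^p(I;\Ld^{p^\star}(\Omega^t))$ (with $p^\star$ replaced by any fixed finite $q>p$ when $p\geq d$, using the remark following \eqref{ineq:sobot}). Recalling that fiberwise $\Omegah\setminus\Omegah_\ep$ is $\Omega^t\setminus\mathcal{A}_t(\Omega_\ep)$, a Hölder inequality with exponents $p^\star/p$ and its conjugate yields, for a.e. $t$,
\begin{align*}
\int_{\Omega^t\setminus\mathcal{A}_t(\Omega_\ep)} |f_n(t,\xx)|^p\,\dd\xx \leq \|f_n(t)\|_{\Ld^{p^\star}(\Omega^t)}^p\ \mu_d\bigl(\Omega^t\setminus\mathcal{A}_t(\Omega_\ep)\bigr)^{1-p/p^\star}.
\end{align*}
Integrating in $t$ and invoking the uniform Sobolev bound, the right-hand side is controlled by a constant independent of $n$ times $\bigl[\sup_t \mu_d(\Omega^t\setminus\mathcal{A}_t(\Omega_\ep))\bigr]^{1-p/p^\star}$, which tends to $0$ as $\ep\to 0$ by point 3 of Proposition \ref{propo:tubular}. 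As $1-p/p^\star>0$, the tail estimate follows.

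For the conclusion, for each $m$ the hypothesis provides a subsequence converging in $\Ld^p(\Omegah_{1/m})$; diagonalizing over $m$ produces a single subsequence (not relabeled) that converges in $\Ld^p(\Omegah_{1/m})$ for every $m$. Splitting
\begin{align*}
\|f_n-f_q\|_{\Ld^p(\Omegah)} \leq \|f_n-f_q\|_{\Ld^p(\Omegah_{1/m})} + \|f_n\|_{\Ld^p(\Omegah\setminus\Omegah_{1/m})} + \|f_q\|_{\Ld^p(\Omegah\setminus\Omegah_{1/m})},
\end{align*}
I would first choose $m$ large to make the last two terms small uniformly in $n,q$ (tail estimate), and then, $m$ being fixed, use the convergence in $\Ld^p(\Omegah_{1/m})$ to make the first term small for $n,q$ large. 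This shows the extracted subsequence is Cauchy, hence convergent, in the complete space $\Ld^p(\Omegah)$, which is the desired global compactness.

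The main obstacle is precisely the uniform tail estimate: the vanishing measure of the collar is not by itself enough, since $(f_n)_n$ is only bounded in $\Ld^p$; one needs the extra integrability furnished by the gradient bound through \eqref{ineq:sobot} to beat the small measure, and it is the interplay of this Sobolev embedding with the uniform-in-time measure control of Proposition \ref{propo:tubular} that makes the argument close.
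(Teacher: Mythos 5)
Your proposal is correct and follows essentially the same route as the paper's own proof: the uniform tail estimate on the peel $\Omegah\setminus\Omegah_\ep$ obtained by combining the uniform-in-time Sobolev bound \eqref{ineq:sobot} with the Hölder inequality and point 3 of Proposition \ref{propo:tubular}, followed by a diagonal extraction and a Cauchy argument. The treatment of the case $p\geq d$ by substituting a finite exponent for $p^\star$ also matches the paper.
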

\begin{proof}
Assume $p<d$.

\vspace{2mm}

Since $(f_n)_n \din\textnormal{L}^p(\Omegah)$ and $(\nabla_{\xx} f_n)_n\din\textnormal{L}^p(\Omegah)$, we hence deduce from estimate \eqref{ineq:sobot} the following bound 
\begin{align}
\label{ineq:bornps}\sup_{n\in\N} \int_a^b \|f_n(t)\|_{\textnormal{L}^{p^\star}(\Omega^t)}^p \dd t < \infty. 
\end{align}
Now for $\ep>0 $, by Hölder inequality
\begin{align*}
\|f_n\|_{\textnormal{L}^p(\Omegah \setminus \Omegah_{\ep})}^p = \int_a^b \|f_n(t)\|_{\textnormal{L}^p(\Omega^t\backslash \mathcal{A}_t(\Omega_\ep))}^p \dd t \leq \int_a^b \|f_n(t)\|_{\textnormal{L}^{p^\star}(\Omega^t)}^p \mu_d (\Omega^t\setminus \mathcal{A}_t(\Omega_\ep))^{1-\frac{p}{p^\star}} \dd t,
\end{align*}
where $\mu_d$ is the $d$-dimensionnal Lebesgue measure, so that thanks to \eqref{ineq:bornps} and point 3. of Proposition \ref{propo:tubular}, one eventually have
\begin{align}
\label{ineq:pel}\sup_{n\in\N} \|f_n\|_{\Ld^p(\Omegah\setminus\Omegah_\ep)} \conv{\ep}{0} 0.
\end{align}
Since for each $\ep>0$, we have $(f_n)_n\ddin\Ld^p(\Omegah_\ep)$, by diagonal extraction we have the existence a subsequence of $(f_n)_n$ converging in each $\Ld^p(\Omegah_\ep)$, and \eqref{ineq:pel}   allows to see that such a subsequence is in fact converging in $\Ld^p(\Omegah)$. 

\vspace{2mm}

The case $p\geq d$ is completely similar, replacing $p^\star$ by $p+1$ in the proof above.
$\qedhere$ \end{proof}

Using the previous Proposition, one deduces the following Corollary for Theorem  \ref{thm:aubin:gen}.
\begin{coro}\label{coro:aubin:gen}
Under the assumptions of Theorem \ref{thm:aubin:gen},  we have in fact $(f_n)_n \ddin\textnormal{L}^p(\Omegah)$.
\end{coro}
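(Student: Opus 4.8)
The plan is to simply combine the two results that immediately precede the corollary. Theorem \ref{thm:aubin:gen} supplies the \emph{local} compactness statement, while Proposition \ref{propo:l2g} is exactly the device that promotes local compactness on the $\ep$-interiors to \emph{global} compactness on $\Omegah$. So the whole content of the corollary is the observation that the hypotheses of these two results dovetail.

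First I would record that the assumptions of Theorem \ref{thm:aubin:gen} already contain the two boundedness requirements demanded by Proposition \ref{propo:l2g}: namely that $(f_n)_n$ and $(\nabla_{\xx} f_n)_n$ are both bounded in $\textnormal{L}^p(\Omegah)$. Next I would invoke Theorem \ref{thm:aubin:gen} itself, which yields, for every $\ep>0$, the local relative compactness $(f_n)_n\ddin\textnormal{L}^p(\Omegah_\ep)$. Together these are precisely the three hypotheses of Proposition \ref{propo:l2g}, so a direct application of that proposition gives the desired conclusion $(f_n)_n\ddin\textnormal{L}^p(\Omegah)$.

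There is no genuine obstacle at this stage; the only (purely formal) thing to check is that the family $(\Omegah_\ep)_{\ep>0}$ appearing in the conclusion of Theorem \ref{thm:aubin:gen} is the very same one used in Proposition \ref{propo:l2g}, which holds by the conventions fixed at the start of Section \ref{sec:mov}. The real substance has already been discharged in the proof of the local-to-global Proposition \ref{propo:l2g}, where the uniform Sobolev estimate \eqref{ineq:sobot} and the measure decay in point 3 of Proposition \ref{propo:tubular} combine to produce the uniform $\textnormal{L}^p$-smallness \eqref{ineq:pel} on the peel $\Omegah\setminus\Omegah_\ep$, thereby ruling out any loss of mass near the lateral boundary.
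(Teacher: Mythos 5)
Your proposal is correct and is exactly the paper's argument: the corollary is obtained by feeding the conclusion of Theorem \ref{thm:aubin:gen} (local compactness on each $\Omegah_\ep$) together with the $\textnormal{L}^p(\Omegah)$ bounds on $(f_n)_n$ and $(\nabla_{\xx} f_n)_n$ into the local-to-global Proposition \ref{propo:l2g}. Nothing further is needed.
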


\subsection{Divergence-free vector fields}\label{subsec:div}
In $\Ld^2(\Omega)$, since $\mathscr{D}(\Omega)$ is a dense subset, one may recover the norm of an element $\uu$ of $\Ld^2(\Omega)$ by the duality formula 
\begin{align}
\label{eq:dua}\|\uu\|_2 = \sup_{\fffi\in\D(\Omega), \|\fffi\|_2 \leq 1} \left\langle \uu, \fffi\right\rangle_{\textnormal{L}^2(\Omega)}.
\end{align}
Now recall the notations $\Ddiv(\Omega)$, $\Ldiv^2(\Omega)$, $\Ldivoo^2(\Omega)$ and $\gamma_n$ given in subsection \ref{subsec:not}. $\Ddiv(\Omega)$ is not dense $\Ldiv^2(\Omega)$ (its closure is $\Ldivoo^2(\Omega)$). This has for consequence that one may not expect a duality formula as \eqref{eq:dua}, when testing only against divergence-free smooth functions. As a matter of fact, there is a dual estimate of the same flavor for $\Ldiv^2(\Omega)$, but one has to take into account the normal trace. The following result is proven in the appendix section \ref{sec:appe}.
\begin{lem}\label{lem:norm2} Denote by $\Cc_\Omega$  the Poincaré-Wirtinger constant of $\Omega$. For all $\uu\in\Ldiv^2(\Omega)$ one has
\begin{align}
\label{ineq:lem:norm2}\| \uu\|_{2} \leq \sup_{\fffi\in\Ddiv(\Omega), \|\fffi\|_2 \leq 1} \left\langle \uu, \fffi\right\rangle_{\textnormal{L}^2(\Omega)} + (1+\Cc_\Omega)\|\gamma_n\uu\|_{\H^{-1/2}(\partial \Omega)}.
\end{align}
\end{lem}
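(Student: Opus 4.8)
The plan is to split $\uu$ according to the orthogonal (Helmholtz) decomposition of $\textnormal{L}^2(\Omega)$ adapted to divergence-free fields, and to control the two resulting pieces separately: the part lying in $\Ldivoo^2(\Omega)$ through the duality against $\Ddiv(\Omega)$, and the remaining gradient part through the normal trace. Recall that $\Ldivoo^2(\Omega)$ is a closed subspace of $\textnormal{L}^2(\Omega)$ whose orthogonal complement is exactly $\{\nabla q : q\in\H^1(\Omega)\}$ (the classical Helmholtz/Weyl decomposition, see \cite{gira}: a field orthogonal to every $\fffi\in\Ddiv(\Omega)$ is a distributional gradient, and since it lies in $\textnormal{L}^2(\Omega)$ with $\Omega$ bounded and connected, the potential belongs to $\H^1(\Omega)$). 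I would therefore write $\uu=\uu_0+\nabla q$, with $\uu_0$ the orthogonal projection of $\uu$ onto $\Ldivoo^2(\Omega)$ and $q\in\H^1(\Omega)$ normalised to zero mean-value. Since $\uu$ and $\uu_0$ are both divergence-free, so is $\nabla q$, i.e. $q$ is harmonic; and since $\uu_0\in\Ldivoo^2(\Omega)=\Ker\,\gamma_n$ we have $\gamma_n\uu=\gamma_n(\nabla q)$. By orthogonality $\|\uu\|_2^2=\|\uu_0\|_2^2+\|\nabla q\|_2^2$, so using $\sqrt{a^2+b^2}\le a+b$ it suffices to bound $\|\uu_0\|_2$ and $\|\nabla q\|_2$ by the two terms of the right-hand side.

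For the first piece, I would use that $\uu_0$ lies in the $\textnormal{L}^2$-closure of $\Ddiv(\Omega)$, so that $\|\uu_0\|_2=\sup_{\fffi\in\Ddiv(\Omega),\|\fffi\|_2\le1}\prods{\uu_0}{\fffi}{\textnormal{L}^2(\Omega)}$. Moreover every $\fffi\in\Ddiv(\Omega)$ is orthogonal to $\nabla q$ (integrate by parts: the boundary term vanishes since $\fffi$ is compactly supported, and $\div\fffi=0$), whence $\prods{\uu_0}{\fffi}{\textnormal{L}^2(\Omega)}=\prods{\uu}{\fffi}{\textnormal{L}^2(\Omega)}$. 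This produces exactly the first term of the bound.

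For the second piece, I would invoke the Green formula attached to the normal trace (see \cite{gira}): for a divergence-free field such as $\nabla q\in\Ldiv^2(\Omega)$ and any $v\in\H^1(\Omega)$ one has $\langle\gamma_n(\nabla q),\gamma v\rangle_{\H^{-1/2},\H^{1/2}}=\int_\Omega\nabla q\cdot\nabla v$. Taking $v=q$ gives
\[
\|\nabla q\|_2^2=\langle\gamma_n\uu,\gamma q\rangle_{\H^{-1/2},\H^{1/2}}\le\|\gamma_n\uu\|_{\H^{-1/2}(\partial\Omega)}\,\|\gamma q\|_{\H^{1/2}(\partial\Omega)}.
\]
I would then bound $\|\gamma q\|_{\H^{1/2}(\partial\Omega)}\le\|q\|_{\H^1(\Omega)}$ directly from the definition of the $\H^{1/2}$-norm (the field $q$ is itself admissible in the infimum), and $\|q\|_{\H^1(\Omega)}\le(1+\Cc_\Omega)\|\nabla q\|_2$ by the zero-mean Poincaré-Wirtinger inequality with constant $\Cc_\Omega$. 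Dividing by $\|\nabla q\|_2$ yields $\|\nabla q\|_2\le(1+\Cc_\Omega)\|\gamma_n\uu\|_{\H^{-1/2}(\partial\Omega)}$, and combining with the first piece closes the estimate.

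The bookkeeping (triangle inequality, Poincaré-Wirtinger) is routine; the real content I would have to secure is the Helmholtz decomposition together with the precise identity $(\Ldivoo^2(\Omega))^\perp=\nabla\H^1(\Omega)$ and the compatibility of the normal-trace Green formula with the operator $\gamma_n$ used here. This is where the Lipschitz regularity of $\Omega$ enters and where I would rely on \cite{gira}; in particular the identity $\gamma_n\uu=\gamma_n(\nabla q)$ and the evaluation $\langle\gamma_n(\nabla q),\gamma q\rangle=\|\nabla q\|_2^2$ are the crux of the argument.
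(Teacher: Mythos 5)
Your proof is correct and follows essentially the same route as the paper: project onto $\Ldivoo^2(\Omega)$, recover the first term by density of $\Ddiv(\Omega)$ in that subspace, and control the complementary part by a zero-mean harmonic potential whose normal derivative is $\gamma_n\uu$, estimated via the Green formula and Poincaré--Wirtinger. The only cosmetic difference is that the paper introduces this potential by solving the Neumann problem and uses the minimality of the orthogonal projection (working first with smooth fields and concluding by density), whereas you identify $\uu-\Pp\uu$ directly through the Helmholtz decomposition; the resulting estimates and constant are identical.
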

\begin{rmk}\label{rmk:normeq}
Together with \eqref{ineq:trace:div} (continuity of $\gamma_n$), Lemma \ref{lem:norm2} allows to see that the r.h.s. of \eqref{ineq:lem:norm2} defines a norm which is equivalent to the $\|\cdot\|_2$ norm on $\Ldiv^2(\Omega)$. Hence \eqref{ineq:lem:norm2} is a generalization of \eqref{eq:dua}.
\end{rmk}
The previous results may of course be adapted to the case of divergence-free (in $\xx$) vector fields defined on $\Omegah$. Recall the notation given in subsection \ref{subsec:not} for $\Ddiv(\Omegah)$, $\Ldiv^2(\Omegah)$ and $\Ldivoo^2(\Omegah)$. It is possible to define a normal trace on $\Ldiv^2(\Omega)$ even though it is a bit tedious. Indeed, thanks to \eqref{ineq:trace:div}, for all $t\in (a,b)$ the operator norm of  $\gamma_{n}^t:\Ldiv^2(\Omega^t)\rightarrow\H^{-1/2}(\partial\Omega^t)$ is not greater than $1$, so that one has, for all $\fffi \in\Ddiv(\R\times\R^d)$, 
\begin{align*}
 \int_a^b \|\gamma_{n}^t\fffi(t)\|_{\H^{-1/2}(\Omega^t)}^2 \dd t
&\leq \int_a^b \|\fffi(t)\|_{\textnormal{L}^2(\Omega^t)}^2 \dd t = \|\fffi\|_{\textnormal{L}^2(\Omegah)}^2,
\end{align*}
which allows to define a ``normal'' trace operator on $\Ldiv^2(\Omegah)$, the quotes refering to the fact that the normal vector is here orthogonal to $\partial\Omega^t$ in $\R^d$ and not to $\partial\Omegah$ in $\R^{d+1}$. This normal trace lies in the space denoted $\H^{-1/2}_{\xx}(\partial\Omegah)$, defined as the completion of  $\mathscr{C}^\infty(\partial\Omegah)$ for the norm
\begin{align*}
\|\psi\|_{\H_{\xx}^{-1/2}(\partial\Omegah)}:=\left( \int_a^b \|\psi(t)\|_{\H^{-1/2}(\partial \Omega^t)}^2 \dd t\right)^{1/2}.
\end{align*}
This normal trace operator will be denoted by $\gamma_{\widehat{n}}:\Ldiv^2(\Omegah)\rightarrow \H_{\xx}^{-1/2}(\partial\Omegah)$. 

\vspace{2mm}

Since for $\delta<\gamma$, $\Omega_\delta$ is Lipschitz (see Proposition \ref{propo:tube}), so are the diffeomorphic domains $\mathcal{A}_t(\Omega_\delta)$. We can  hence reproduce the previous analysis for $\Omegah_\delta$  and define similarly a normal trace on $\partial\Omegah_\delta$, that we denote $\gamma_{\widehat{n},\delta}:\Ldiv^2(\Omegah_\delta)\rightarrow\H_{\xx}^{-1/2}(\partial\Omegah_\delta)$.

\vspace{2mm}

 As in the  stationnary case, $\Ldiv^2(\Omegah_\delta)$ vector fields have a dual estimate for their norm. More precisely, using Lemma \ref{lem:norm2} to get estimate \eqref{ineq:lem:norm2} at each time $t$ and Proposition \ref{propo:poinc:t} to handle the dependency of the Poincaré-Wirtinger constant w.r.t. to $t\in[a,b]$ and $\delta\in[0,\gamma)$, we get, after integration in time 
\begin{lem}\label{lem:norm2t}  
Recall the definition of $\gamma$ in Proposition \ref{propo:tube}. For all $\delta\in[0,\gamma)$ and all $\uu\in\textnormal{L}^2(\Omegah_\delta)$ 
\begin{align*}
\| \uu\|_{\textnormal{L}^2(\Omegah_\delta)} \leq \sup_{\ppsi\in\Ddiv(\Omegah_\delta), \|\ppsi\|_{2} \leq 1} \prods{\uu}{\ppsi}{\textnormal{L}^2(\Omegah)} + (\Cc_{\Omega,\gamma}^\mathcal{A}+1)\|\gamma_{\widehat{n},\delta}\uu\|_{\H_{\xx}^{-1/2}(\partial\Omegah_\delta )},
\end{align*}
where $\Cc_{\Omega,\gamma}^\mathcal{A}$ is the constant of Proposition \ref{propo:poinc:t}.\end{lem}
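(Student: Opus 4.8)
The plan is to derive the estimate slicewise in the time variable through the stationary Lemma \ref{lem:norm2}, then integrate in $t$; the only genuinely delicate point is the treatment of the dual (supremum) term.

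First I would fix $\uu\in\Ldiv^2(\Omegah_\delta)$ and note that for almost every $t\in(a,b)$ the slice $\uu(t):\xx\mapsto\uu(t,\xx)$ belongs to $\Ldiv^2(\mathcal{A}_t(\Omega_\delta))$ (its divergence in $\xx$ vanishes a.e. in $t$, and $\mathcal{A}_t(\Omega_\delta)$ is Lipschitz for $\delta<\gamma$ by Proposition \ref{propo:tube} together with the fact that $\mathcal{A}_t$ is a bi-Lipschitz homeomorphism). Applying Lemma \ref{lem:norm2} on $\mathcal{A}_t(\Omega_\delta)$, whose Poincaré--Wirtinger constant is bounded uniformly by $\Cc_{\Omega,\gamma}^{\mathcal{A}}$ thanks to Proposition \ref{propo:poinc:t}, yields, with $S(t):=\sup\{\prods{\uu(t)}{\fffi}{\Ld^2(\mathcal{A}_t(\Omega_\delta))}:\fffi\in\Ddiv(\mathcal{A}_t(\Omega_\delta)),\ \|\fffi\|_2\le1\}$,
\[
\|\uu(t)\|_{\Ld^2(\mathcal{A}_t(\Omega_\delta))}\le S(t)+(\Cc_{\Omega,\gamma}^{\mathcal{A}}+1)\,\|\gamma_n^t\uu(t)\|_{\H^{-1/2}(\partial\mathcal{A}_t(\Omega_\delta))}.
\]

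Next I would square this, integrate over $(a,b)$ and apply Minkowski's inequality in $\Ld^2(a,b)$ to separate the two contributions. The trace term becomes exactly $(\Cc_{\Omega,\gamma}^{\mathcal{A}}+1)\,\|\gamma_{\widehat{n},\delta}\uu\|_{\H_{\xx}^{-1/2}(\partial\Omegah_\delta)}$ by the very definition of that norm, so that it only remains to prove
\[
\|S\|_{\Ld^2(a,b)}\le\sup_{\ppsi\in\Ddiv(\Omegah_\delta),\ \|\ppsi\|_2\le1}\prods{\uu}{\ppsi}{\Ld^2(\Omegah)}.
\]
The opposite inequality is immediate (for admissible $\ppsi$ the definition of $S$ and Cauchy--Schwarz in $t$ give $\prods{\uu}{\ppsi}{}=\int_a^b\prods{\uu(t)}{\ppsi(t)}{}\,\dd t\le\int_a^b S(t)\|\ppsi(t)\|_2\,\dd t\le\|S\|_{\Ld^2(a,b)}$), but the direction we actually need forces us to reassemble the slicewise near-maximizers into a \emph{single} space--time divergence-free test function: this is the main obstacle.

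To treat it I would write $S(t)=\|\Pi_t\uu(t)\|_2$, where $\Pi_t$ is the $\Ld^2$-orthogonal projection onto $\Ldivoo^2(\mathcal{A}_t(\Omega_\delta))=\overline{\Ddiv(\mathcal{A}_t(\Omega_\delta))}$, and take as candidate the field $\ww$ with slices $\ww(t):=\Pi_t\uu(t)$. Formally $\|\ww\|_{\Ld^2(\Omegah_\delta)}^2=\int_a^b S(t)^2\,\dd t=\|S\|_{\Ld^2(a,b)}^2$ and $\prods{\uu}{\ww}{}=\int_a^b\|\Pi_t\uu(t)\|_2^2\,\dd t=\|\ww\|_{\Ld^2(\Omegah_\delta)}^2$, so that $\ww/\|\ww\|_{\Ld^2(\Omegah_\delta)}$ realises $\|S\|_{\Ld^2(a,b)}$. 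Legitimising this requires (i) the measurability of $t\mapsto\Pi_t\uu(t)$ and (ii) the fact that $\ww$ lies in the $\Ld^2(\Omegah_\delta)$-closure of $\Ddiv(\Omegah_\delta)$, i.e. the identification of $\Ldivoo^2(\Omegah_\delta)$ with the direct integral of the fibres $\Ldivoo^2(\mathcal{A}_t(\Omega_\delta))$; these are where the real work lies. The clean route is to transport everything to the fixed reference domain $\Omega_\delta$ via the contravariant Piola transform associated with $\mathcal{A}_t$ (built from $\nabla_{\yy}\mathcal{A}_t$ and the Jacobian $\J(\Theta)(t,\cdot)$), which maps $\Ldivoo^2(\Omega_\delta)$ isomorphically onto $\Ldivoo^2(\mathcal{A}_t(\Omega_\delta))$ because it preserves both the vanishing divergence and the vanishing normal trace. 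Under Assumption \ref{ass:A1} these transforms depend continuously on $t$ and are uniformly bounded and invertible thanks to \eqref{ineq:jacob}, so the bundle is trivialised to a single Hilbert space carrying a $t$-dependent but uniformly equivalent inner product; on it the measurability (i) and the direct-integral identification (ii) become routine, using the density in $\Ld^2$ of fields that are smooth and compactly supported in $\xx$, divergence-free in $\xx$, and subsequently mollified in $t$. Approximating $\ww$ by some $\ppsi\in\Ddiv(\Omegah_\delta)$ up to $\ep$ then gives $\|S\|_{\Ld^2(a,b)}\le\sup_{\ppsi}\prods{\uu}{\ppsi}{}+\ep$, and letting $\ep\to0$ concludes.
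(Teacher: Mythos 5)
Your proof follows the same route as the paper's: apply the stationary Lemma \ref{lem:norm2} on each slice $\mathcal{A}_t(\Omega_\delta)$, with the uniform Poincar\'e--Wirtinger constant supplied by Proposition \ref{propo:poinc:t}, and then integrate in time. The paper's own argument consists of exactly this one line and nothing more; the point you isolate --- that $\|S\|_{\Ld^2(a,b)}$ must be dominated by the single space--time supremum, which amounts to showing that the fibrewise projection $t\mapsto \Pi_t\uu(t)$ belongs to $\Ldivoo^2(\Omegah_\delta)$ --- is precisely the step the paper leaves implicit, and your Piola-transform trivialisation of the bundle of fibres is a correct (if only sketched) way of closing it.
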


\subsection{Proof of Theorem \ref{thm:aubin}}\label{subsec:nav}
At first sight, Theorem \ref{thm:aubin} seems to be a particular case ($p=2$) of Theorem \ref{thm:aubin:gen}  (and Proposition \ref{propo:l2g} for the global compactness). Indeed, $(\uu_n)_n\din\Ld^2(\Omegah)$, $(\nabla_{\xx}\uu_n)_n\din\Ld^2(\Omegah)$ and  assumptions \eqref{ineq:lem} resembles \eqref{ineq:lem:gen}. However, the two assumptions do not merge: in \eqref{ineq:lem:gen}, the estimate is known against every test function, whereas \eqref{ineq:lem} involves only divergence-free test functions. In fact, if one tries to reproduce the nice and simple proof of Theorem  \ref{thm:aubin:gen}, there is an issue when trying to get the time compactness for the convolution product $(\uu_n\star\ffi_{\ell})_n$. The obstruction is in fact present in both the cylindrical and the non-cylindrical case and may be understood without the time variable. For all $\uu\in\Ld^2(\Omega)$, let
\begin{align*}
N(\uu):= \sup_{\|\ppsi\|_2 \leq 1, \ppsi\in\Dd(\Omega)}  \langle \uu,\ppsi\rangle.
\end{align*}
The prickle is here : \emph{$N$ is only a semi-norm}, this is a direct consequence of Remark \ref{rmk:normeq}. The issue is clear: with an estimate such as \eqref{ineq:lem} only one part of the $\textnormal{L}^2$ norm of $(\partial_t \uu_n \star\rho_\ell)_n$ will be controlled, the other one being handled by the normal trace on the boundary. Filling this gap is the main difficutly in the proof below.


\begin{proof}[Proof of Theorem \ref{thm:aubin}]
Instead of using  Proposition \ref{propo:l2g} like we did in Corollary \ref{coro:aubin:gen}, we will prove a bound for $(\uu_n)_n$ in $\Ld^r(\Omegah)$ with $r>2$, which will give us some $\Ld^2$-equi-continuity on all $\Omegah$ and  allow us to work with time-translation to get compactness. 

\vspace{2mm}

Since $(\uu_n)_n$ and $(\nabla_{\xx}\uu_n)_n$ are both bounded in $\Ld^2(\Omegah)$, we have, using estimate \eqref{ineq:sobot}
\begin{align}
\label{ineq:bornps2} \sup_{n\in\N} \int_a^b \|\uu_n(t)\|_{\textnormal{L}^{q}(\Omega^t)}^2 \dd t < \infty,
\end{align}
for some $q>2$. Now pick $r\in]2,q[$, so that $1/r=\theta/q + (1-\theta)/2$ with $r\theta \leq 2$.  Using the interpolation $\Ld^r = [\Ld^2,\Ld^q]_\theta$ together with $(\uu_n)_n\din\Ld^\infty(\R;\Ld^2(\R^d))$ and \eqref{ineq:bornps2}, we get $(\uu_n)_n\din\Ld^r(\Omegah)$. The compactness $(\uu_n)_n\ddin\textnormal{L}^2(\Omegah)$ may hence be deduced from the weaker one $(\uu_n)_n\ddin\Ll^2(\Omegah)$. 

\vspace{2mm}

Since $(\nabla_{\xx} \uu_n)_n\din\textnormal{L}^2(\Omegah)$, thanks to Riesz-Fréchet-Kolmogorov's Theorem we only need to prove local uniform equi-continuity in time 
\begin{align*}
(\lambda_s \uu_n - \uu_n)_s \conv{s}{0}^{\Ll^2(\Omegah)} 0,\text{ uniformly in }n,
\end{align*}
where $\lambda_s$ is the time-translation operator $\lambda_s \uu_n(t,\xx) := \uu_n(t-s,\xx)$. 

\vspace{2mm}

 Point 2. of  Proposition \ref{propo:tube} expresses that, though $\mathcal{A}_t(\Omega_\delta)$ is not the $\delta$-interior $\Omega^t=\mathcal{A}_t(\Omega)$, it may be framed between two such sets namely $\Omega^t_{\ep/\eta}$ and $\Omega^t_{\eta \ep}$. To simplify the presentation of the proof, we will assume from now on that $\eta=1$, so that $\mathcal{A}_t(\Omega_\delta) = \Omega^t_\delta$. The general case follows adapting line to line the proof below, the core of the argument remaining intact. 

\vspace{2mm}

First notice that since $\uu_n$ belongs to $\Ld^2(\R;\Ldiv^2(\R^d))$ and vanishes outisde $\Omegah$, it easy to check that $\gamma_{\widehat{n}}\uu_n=0$. However, for $\delta\in(0,\gamma)$, nothing can be said about $\gamma_{\widehat{n},\delta}\uu_n$. This is why we introduce the orthogonal projection
\begin{align*}
\Pp_\delta : \Ldiv^2(\Omegah_\delta)\rightarrow \Ldivoo^2(\Omegah_\delta).
\end{align*}
As before, $\ffi_\delta$ is the mollifier (in space only) $\ffi_\delta(\xx):=\delta^{-d}\ffi(\delta^{-1}\xx)$, with $\ffi\in\D(\R^d)$ a nonnegative even function with a support in the unit ball (and integral $1$). Notice that since $\uu_n$ vanishes outside $\Omegah$, $\uu_n\star\ffi_\delta$ vanishes outside 
\begin{align*}
\Omegah_{-\delta}:= \bigcup_{a<t<b} \{t\} \times \Omega_{-\delta}^t,
\end{align*}
where we recall the convention $\Omega_{\delta}^t = \Omega^t + B(0,\delta)$.
\begin{itemize}
\item[\emph{Step 1.}]
Because of Lemma \ref{lem:norm2t}, for $\delta < \gamma/2$ one has
\begin{align*}
\|\uu_n\star\ffi_{\delta} - \Pp_{2\delta} (\uu_n \star \ffi_{\delta})\|_{\textnormal{L}^2(\Omegah_{2\delta})} \leq (\Cc_\Omega^\mathcal{A}+1) \|\gamma_{\widehat{n},2\delta} (\uu_n \star\ffi_{\delta})\|_{\H_{\xx}^{-1/2}(\partial\Omegah_{2\delta})}.
\end{align*}
Since the normal trace operator $\gamma_{\widehat{n},2\delta}$ has a norm not greater than $1$, and since $\uu_n\star\ffi_\delta$ vanishes outside $\Omegah_{-\delta}$, one gets, using Hölder inequality for the last line,
\begin{align*}
\|\uu_n\star\ffi_{\delta} - \Pp_{2\delta} (\uu_n \star \ffi_{\delta})\|_{\textnormal{L}^2(\Omegah_{2\delta})} &\leq (\Cc_\Omega^\mathcal{A}+1) \|\uu_n \star\ffi_{\delta}\|_{\textnormal{L}^2(\Omegah_{-\delta} \setminus \Omegah_{2\delta})}\\
&\leq (\Cc_\Omega^\mathcal{A}+1) \|\uu_n \|_{\textnormal{L}^2(\Omegah_{-2\delta} \setminus \Omegah_{3\delta})}\\
&\leq (\Cc_\Omega^\mathcal{A}+1) \mu_{d+1}(\Omegah_{-2\delta} \setminus \Omegah_{3\delta})^{\frac 12- \frac 1r} \|\uu_n\|_{\textnormal{L}^{r}(\Omegah)},
\end{align*}
where $\mu_{d+1}$ denotes the $(d+1)$-dimensionnal Lebesgue measure and $r>2$ has been chosen previously so that $(\uu_n)_n\din\Ld^r(\Omegah)$. Since $\ds \mu_{d+1}(\Omegah_{-2\delta} \setminus \Omegah_{3\delta}) \conv{\delta}{0} 0$ (by dominated convergence),  we infer 
\begin{align}
\label{conv:step1}\sup_{n\in\N} \|\uu_n\star\ffi_{\delta} - \Pp_{2\delta} (\uu_n \star \ffi_{\delta})\|_{\textnormal{L}^2(\Omegah_{2\delta})}\conv{\delta}{0} 0.
\end{align}
\item[\emph{Step 2.}] 
Let's prove that for all $\delta>0$, there exists $\xi>0$ such as 
\begin{align*}
\forall \psi \in \D(\Omegah_{2\delta}), \quad 0< \sigma \leq \xi  \Longrightarrow \lambda_{-\sigma} \psi \in \D(\Omegah_\delta).
\end{align*}

\vspace{2mm}

Fix $\delta>0$. Because of Assumption \ref{ass:A1} and some uniform continuity argument, we have the existence of $\xi>0$ such as for all $0<\sigma\leq \xi$ and all $t \in[a,b]$, $\mathcal{A}_{t+\sigma}(\Omega_{2\delta}) \subset \mathcal{A}_t(\Omega_\delta)$. Now consider $\psi\in\D(\Omegah_{2\delta})$. If $(t,\xx) \notin\Omegah_\delta$, then $\xx\notin\mathcal{A}_t(\Omega_\delta)$, whence $\xx\notin\mathcal{A}_{t+\sigma}(\Omega_{2\delta})$ because of the previous inclusion. We eventually get $(t+\sigma,\xx)\notin \Omegah_{2\delta}$, that is, by assumption, $\psi(t+\sigma,\xx)=0$ which means also  $\lambda_{-\sigma} \psi(t,\xx) =0$ and we have proved $\text{Supp}\,\lambda_{-\sigma} \psi \subset\Omegah_\delta$.

\item[\emph{Step 3.}] 
Estimate \eqref{ineq:lem} together with the standard properties of the convolution operator allows to prove, for any \emph{fixed} $\delta>0$, the existence of some positive constant  $\Cc_\delta>0$ such as
\begin{align}
\label{ineq:lemep}
\forall \ppsi\in\Ddiv(\Omegah_\delta),\quad \langle \uu_n \star \ffi_{\delta} , \ppsi \rangle = \langle \uu_n , \ppsi\star \ffi_{\delta}  \rangle  \leq \Cc_\delta \| \ppsi\|_2.
\end{align}

For any pair $(\vv,\Pphi)\in\D(\R\times\R^3)^2$, we have 
\begin{align*}
\langle \lambda_{s} \vv-\vv,\boldsymbol{\Phi}\rangle =  s\int_0^1 \langle \vv, \lambda_{-s z}\partial_t \boldsymbol{\Phi}\rangle \,\dd z,
\end{align*}
where the bracket is simply the inner product of  $\textnormal{L}^2(\R\times\R^d)$. By density, we may use this formula with  $\vv = \uu_n\star\ffi_{\delta}$ and $\boldsymbol{\Phi} \in\Ddiv(\Omegah_{2\delta})$. But because of Step 2. we know that for $s$ small enough and any $z\in[0,1]$, for any $\psi\in\Ddiv(\Omegah_{2\delta})$, one has $\lambda_{-sz} \psi \in\Ddiv(\Omegah_\delta)$. Estimate \eqref{ineq:lemep} is hence usable with the test function $\ppsi := \lambda_{-sz} \partial_t \boldsymbol{\Phi}$. Using the duality formula of Lemma  \ref{lem:norm2t} we eventally infer, for any fixed  $\delta$, 
\begin{align}\label{ineq:step3}
\sup_{n\in\N}\| (\lambda_s-\Id)\Pp_{2\delta}(\uu_n\star\ffi_\delta)\|_{\textnormal{L}^2(\Omegah_{2\delta})} \conv{s}{0} 0.
\end{align}
\item[\emph{Step 4.}] 
Now we may conclude the proof. As mentionned, it suffices to prove that for all compact subset $K$ of $\Omegah$,
\begin{align*}
(\lambda_s \uu_n - \uu_n)_s \conv{s}{0}^{\Ld^2(K)} 0,\text{ uniformly in }n.
\end{align*}
For this purpose, we write, for any $\delta\in(0,\gamma/2)$
\begin{align*}
\lambda_s \uu_n -\uu_n &=  (\lambda_s -\Id)(\uu_n-\uu_n\star\ffi_{\delta}) \\
& + (\lambda_s-\Id)(\uu_n\star\ffi_{\delta}-\Pp_{2\delta}\uu_n\star\ffi_{\delta})\\
& + (\lambda_s-\Id)\Pp_{2\delta}\uu_n\star\ffi_{\delta}.
\end{align*}
and we proceed line by line, in the $\textnormal{L}^2(\Omegah_{\ep})$ norm. We take $\delta$ small enough so that  $K \subset \Omegah_{2\delta}$ and in particular $\|\cdot\|_{\Ld^2(K)} \leq \|\cdot \|_{\Ld^2(\Omega_{2\delta})}$.

We may first pick $\delta$ small enough to treat the first line of the r.h.s. thanks to the bound on the gradient in the space variable $(\nabla_{\xx} \uu_n)_n\din\textnormal{L}^2(\Omegah)$ and the usual estimate 
\begin{align*}
\| \uu_n-\uu_n\star\ffi_\delta \|_{\textnormal{L}^2(K)} \leq \| \uu_n-\uu_n\star\ffi_\delta \|_{\textnormal{L}^2(\Omegah_{2\delta})} \leq  \delta \|\nabla_{\xx} \uu_n\|_{\textnormal{L}^2(\Omegah)}.
\end{align*}
The first line of the r.h.s. may hence be made arbitrarily small with $\delta$, independently  of $n$ (and $s$). The uniform (in $n$) convergence \eqref{conv:step1} of Step 1. allows to treat the second line of the r.h.s. in the same way.  Once $\delta$ has been fixed to handle the previous lines, the third one is lastly handled thanks to estimate \eqref{ineq:step3} above in Step 3. $\qedhere$

\end{itemize}
\end{proof}

\begin{coro}\label{coro:thm:aubin}
Consider $(\uu_n)_n\din\Ldiv^2(\Omegah)$ such as $(\nabla_{\xx} \uu_n)_n \din\Ld^2(\Omegah)$ and $(\mathds{1}_{\Omegah}\uu_n)_n\din\Ld^\infty(\R;\Ld^2(\R^d))$. Asume furthermore that $(\uu_n)_n$ satisfies the dual estimate \eqref{ineq:lem}  and $(\gamma_{\widehat{n}} \uu_n)_n\ddin\H^{-1/2}_{\xx}(\partial\Omegah)$. Then $(\uu_n)_n\ddin\Ld^2(\Omegah)$.
\end{coro}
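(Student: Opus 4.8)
The plan is to reduce the statement to Theorem \ref{thm:aubin} by peeling off from $\uu_n$ a lifting of its normal trace. Since $(\gamma_{\widehat{n}}\uu_n)_n$ is relatively compact in $\H^{-1/2}_{\xx}(\partial\Omegah)$, I would first extract a subsequence (not relabelled) along which $g_n := \gamma_{\widehat{n}}\uu_n$ converges \emph{strongly} to some $g$ in $\H^{-1/2}_{\xx}(\partial\Omegah)$. The whole game is then to show that $(\uu_n)_n$ inherits strong compactness from this convergence together with the bounds already available.

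Next I would build a continuous linear lifting of the normal trace. For a.e. $t$, solving the Neumann problem $\Delta\phi_n(t)=0$ in $\Omega^t$, $\partial_{\nn}\phi_n(t)=g_n(t)$ on $\partial\Omega^t$ (the compatibility condition $\int_{\partial\Omega^t}g_n(t)=\int_{\Omega^t}\div_{\xx}\uu_n(t)=0$ holds since $\uu_n$ is divergence-free), and setting $\vv_n(t):=\nabla_{\xx}\phi_n(t)$, produces a divergence-free gradient field with $\gamma_{\widehat{n}}\vv_n=g_n$ and $\|\vv_n(t)\|_{\Ld^2(\Omega^t)} \leq C\,\|g_n(t)\|_{\H^{-1/2}(\partial\Omega^t)}$, the constant $C$ being uniform in $t$ thanks to the uniform geometric control of subsection \ref{subsec:epint} (Proposition \ref{propo:poinc:t}). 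Integrating in time, this defines a bounded linear map $L:\H^{-1/2}_{\xx}(\partial\Omegah)\to\Ld^2(\Omegah)$ with $\vv_n=Lg_n$; hence $\vv_n\to L g$ strongly in $\Ld^2(\Omegah)$. In particular $(\vv_n)_n$ is relatively compact, uniformly integrable, and uniformly equicontinuous in both the space and time variables (Riesz-Fréchet-Kolmogorov).

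I would then set $\ww_n:=\uu_n-\vv_n$, which is exactly the $\Ld^2$-orthogonal projection of $\uu_n$ onto $\Ldivoo^2(\Omegah)$: it has vanishing normal trace and therefore \emph{vanishes outside} $\Omegah$ in the sense required by Theorem \ref{thm:aubin}. Two facts make $\ww_n$ amenable to the proof of that theorem. First, a divergence-free gradient field is $\Ld^2$-orthogonal to divergence-free test functions, and $\partial_t\ppsi$ remains such a test function, so $\langle\partial_t\vv_n,\ppsi\rangle=0$ for every $\ppsi\in\Ddiv(\Omegah)$; consequently $\ww_n$ satisfies the very same dual estimate \eqref{ineq:lem} as $\uu_n$, with the same constant. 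Second, although $\ww_n$ need not enjoy a bound on $\nabla_{\xx}\ww_n$ (the lifting $\vv_n$ carries only $\H^{-1/2}$ boundary data), every place in the proof of Theorem \ref{thm:aubin} where such a bound is used can be fed instead by splitting back $\ww_n=\uu_n-\vv_n$: the mollification error $\|\ww_n-\ww_n\star\ffi_\delta\|_{\Ld^2}$ is dominated by $\delta\|\nabla_{\xx}\uu_n\|_{\Ld^2}$ plus $\|\vv_n-\vv_n\star\ffi_\delta\|_{\Ld^2}$, both small uniformly in $n$ (the former by the gradient bound on $\uu_n$, the latter by the uniform space-equicontinuity of the relatively compact family $(\vv_n)_n$), and the peel estimate of Step~1 follows from the $\Ld^r$ bound on $\uu_n$ (obtained as in Theorem \ref{thm:aubin} from the assumed $\Ld^\infty(\R;\Ld^2(\R^d))$ and gradient bounds) together with the uniform integrability of $(\vv_n)_n$. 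Running Steps~1--4 of Theorem \ref{thm:aubin} on $\ww_n$ then yields $(\ww_n)_n\ddin\Ld^2(\Omegah)$, and adding the relatively compact piece gives $(\uu_n)_n=(\ww_n+\vv_n)_n\ddin\Ld^2(\Omegah)$.

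The main obstacle is the construction of the lifting $L$ with \emph{time-uniform} continuity constants and the verification that it is well defined on $\H^{-1/2}_{\xx}(\partial\Omegah)$: one must check the zero-mean compatibility of the data (immediate from the divergence-free condition), the uniform-in-$t$ solvability of the Neumann problem on the moving domain (which is exactly where Assumption \ref{ass:A1} and the uniform Poincaré-Wirtinger constant of Proposition \ref{propo:poinc:t} enter), and the measurability of $t\mapsto\vv_n(t)$. The remaining difficulty is purely bookkeeping: tracking which bounds survive for $\ww_n$ and substituting the compactness of $(\vv_n)_n$ for the missing gradient control, exactly as sketched above.
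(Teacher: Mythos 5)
Your proof is correct and follows essentially the same route as the paper: extract a subsequence along which the normal traces converge, subtract a continuous linear lifting of the trace (the paper simply posits lifts $\ww_n$ converging in $\Ldiv^2(\Omegah)$, while you construct them explicitly via the Neumann problem, exactly as in the proof of Lemma \ref{lem:norm2}), and reduce to the zero-normal-trace machinery of Theorem \ref{thm:aubin}. The only organizational difference is that the paper substitutes the splitting $\uu_n=\zz_n+\ww_n$ only into the middle (projection-defect) term of the telescoping identity, keeping $\uu_n$ itself in the mollification and time-translation terms, which spares the bookkeeping you carry out to compensate for the missing gradient and $\Ld^\infty(\R;\Ld^2(\R^d))$ bounds on the trace-free part.
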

\begin{rmk}
The main difference with Theorem \ref{thm:aubin} is that $\uu_n$ here is not defined outside $\Omegah$. Extending it  by $0$ outside is always possible, but then nothing insures that $\uu_n\in\Ld^2(\R;\Ldiv(\R^3))$, since its normal trace is \emph{a priori} not zero. 
\end{rmk}
\begin{rmk}
This Corollary is designed for nonhomogeneous boundary conditions. It includes the case of an approximation procedure in which the boundary condition is fixed independantly of $n$ (in which case $(\gamma_{\widehat{n}}\uu_n)_n$ is obviously compact) and the case of a penalization strategy in which one forces the boundary condition to hold at the limit by imposing some behavior outside $\Omegah$ (see \cite{fuj} for instance).
\end{rmk}
\begin{proof}
The beginning of the proof is the same  as in Theorem \ref{thm:aubin}, so that we only need to prove for every compact subset $K$ of $\Omegah$
\begin{align*}
(\lambda_s \uu_n - \uu_n)_s \conv{s}{0}^{\Ld^2(K)} 0,\text{ uniformly in }n.
\end{align*}
We extract from $(\gammah \uu_n)_n$ a converging subsequence (that we don't relabel) and consider $(\ww_n)_n$ a corresponding sequence of lifts such as $(\ww_n)_n$ is convergent in $\Ldiv^2(\Omegah)$. As before, we write 
\begin{align*}
(\lambda_s -\Id)\uu_n &= (\lambda_s -\Id)(\uu_n-\uu_n\star\ffi_{\delta}) \\
& + (\lambda_s-\Id)(\uu_n\star\ffi_{\delta}-\Pp_{2\delta}\uu_n\star\ffi_{\delta})\\
& + (\lambda_s-\Id)\Pp_{2\delta}\uu_n\star\ffi_{\delta}.
\end{align*}
In the r.h.s. of the previous equality the first and last lines are handled exactly as before. For the second line, we may write 
\begin{align*}
(\lambda_s-\Id)(\uu_n\star\ffi_{\delta}-\Pp_{2\delta}\uu_n\star\ffi_{\delta}) = (\lambda_s-\Id)(\zz_n\star\ffi_\delta-\Pp_{2\delta}\zz_n\star\ffi_\delta) + (\lambda_s-\Id)(\ww_n-\Pp_{2\delta}\ww_n), 
\end{align*}
where $\zz_n:=\uu_n-\ww_n$. When $\delta$ is fixed such as $K\subset\Omegah_{2\delta}$, the second term of the r.h.s. is going to $0$ in $\Ld^2(K)$ as $s\rightarrow 0$ (uniformly in $n$) simply because $(\ww_n-\Pp_{2\delta}\ww_n)_n$ is converging in $\Ld^2(\Omegah_{2\delta})$ (whence in particular relatively compact) : indeed, $(\ww_n)_n$ is converging in $\Ld^2(\Omegah)$, and $\Id-\Pp_{2\delta}$ is a continuous endomorphism of $\Ld^2(\Omegah_{2\delta})$ . As for the first term in the r.h.s., it suffices  to prove
\begin{align*}
\sup_{n\in\N} \|\zz_n\star\ffi_{\delta} -\Pp_{2\delta}\zz_n\star\ffi_\delta\|_{\Ld^2(\Omegah_{2\delta})} \conv{\delta}{0} 0.
\end{align*} 
This limit can be proven reproducing exactly Step 1. of the proof of Theorem \ref{thm:aubin}, because $\gamma_{\widehat{n}}\zz_n = 0$, which allows to extend $\zz_n$ by $0$ outside $\Omegah$, and have $(\zz_n)_n\din\Ld^2(\R;\Ldiv^2(\R^d))$ (and so, recover the assumption on $\uu_n$ in Theorem \ref{thm:aubin}). $\qedhere$
\end{proof}

\section{Appendix}\label{sec:appe}
We give here the proof, Lemma \ref{lem:norm2} and Proposition \ref{propo:poinc}. 
\begin{proof}[Proof of Lemma \ref{lem:norm2}]
 Consider the orthogonal projection ${\Pp:\Ldiv^2(\Omega)\rightarrow \Ldivoo^2(\Omega)}$ and $\ww \in\Ddiv(\R^d)$. Since $\Ldivoo^2(\Omega)=\overline{\Ddiv(\Omega)}^{\|\cdot\|_2}$, we have  
\begin{align*}
\|\Pp\ww\|_2 &= \sup_{\fffi\in\Ddiv(\Omega), \|\fffi\|_2 \leq 1} \int_\Omega \Pp\ww(\xx)\cdot \fffi(\xx) \dd\xx \\
&= \sup_{\fffi\in\Ddiv(\Omega), \|\fffi\|_2 \leq 1} \int_\Omega \ww(\xx)\cdot \fffi(\xx) \dd\xx.
\end{align*}
It remains to estimate $\|\ww-\Pp \ww\|_2$ for which one can solve the Neumann-Laplacian problem
\begin{align*}
 \Delta v &= 0, \text{ on } \Omega,\\
 \int_\Omega v &= 0,\\
 \partial_{\nn} v &= \gamma_n \ww \text{ on } \partial \Omega.
\end{align*}
The previous boundary problem is well posed and the variational formulation gives directly, $\|\nabla v\|_{2}^2 \leq \| \gamma_n\ww \|_{\H^{-1/2}(\partial \Omega)} \|\gamma v\|_{\H^{1/2}(\Omega)}$, whence 
\begin{align*}
\|\nabla v\|_2^2 \leq \|\gamma_n\ww\|_{\H^{-1/2}(\partial\Omega)} \|v\|_{\H^1(\Omega)} \leq \sqrt{1+\Cc_\Omega^2} \|\gamma_n\ww\|_{\H^{-1/2}(\partial\Omega)} \|\nabla v\|_{2},
\end{align*}
so that eventually $\|\nabla v\|_2^2 \leq (1+\Cc_\Omega) \|\gamma_n\ww\|_{\H^{-1/2}(\partial\Omega)}$. We hence have ${\ww-\nabla v} \in \Ldivoo^2(\Omega)$ and since the orthogonal projection minimizes the distance we get
\begin{align*}
\| \ww -\Pp \ww \|_2 \leq (1+\Cc_\Omega)
 \|\gamma_n \ww \|_{\H^{-1/2}(\Omega)}. 
\end{align*}
The desired estimate \eqref{ineq:lem:norm2} follows then directly for $\ww\in\Ddiv(\R^d)$, and we may extend it by density to $\uu \in\Ldiv^2(\Omega)$. $\qedhere$\end{proof}

\begin{proof}[Proof of Proposition \ref{propo:poinc}]
We recall (see Theorem 5' in Chap. 6 of \cite{stein} for instance) that, for a Lipschitz domain $\Omega$ one has the existence of a continuous linear extension operator $\P_\Omega$, mapping $\W^{1,p}(\Omega)$ to $\W^{1,p}(\R^d)$, whose norm depends solely on the Lipschitz constant of $\partial\Omega$. 

\vspace{2mm}

Thanks to Proposition \ref{propo:tube}, we hence have the existence of extension operators $\P_{\Omega_\ep}:\H^1(\Omega_\ep)\rightarrow\H^1(\R^d)$, parametrized by $\ep\in[0,\gamma)$, and such as the norms of all these operators are all bounded by some constant depending only on $\gamma$, say $M_\gamma$.

\vspace{2mm} 

Let's proceed as in the usual proof for the Poincaré inequality and argue by contradiction. The opposite statement would imply the existence of a sequence $(\ep_n)_n \in[0,\gamma)$,  and a sequence $u_n\in\H^1(\Omega_{\ep_n})$, such as 
\begin{align*}
\|u_n\|_{\Ld^2(\Omega_{\ep_n})}=1,\qquad\| \nabla u_n\|_{\Ld^2(\Omega{\ep_n})} \leq 1/n,\qquad\int_{\Omega_{\ep_n}} u_n(\xx)\,\dd \xx = 0.
\end{align*}
Without loss of generality, we may assume that $(\ep_n)_n$ is monotonically to some $\ep \in[0,\gamma]$. 

\vspace{2mm}

 Now, because of the previous bounds on $(u_n)_n$, and since the extension operators $\P_{\Omega_{\ep_n}}$ are all of norm less than $M_\gamma$, the corresponding sequence of extensions $v_n:=\P_{\Omega_{\ep_n}} u_n$ satisfies $(v_n)_n\din\H^1(\R^d)$. In particular, thanks to  Rellich-Kondrachov's Theorem, we have, $(v_n)_n\ddin\Ll^2(\R^d)$, so that, up to an extraction (that we don't write), we have $(v_n)_n$ converging to some $v$ in $\Ll^2(\R^d)$. 

\vspace{2mm}

Since $v_n$ equals $u_n$ on $\Omega_{\ep_n}$, we have that $(\|\nabla v_n\|_{\Ld^2(\Omega_{\ep_n})})_n$ converges to $0$. But the sequence $(\ep_n)_n$ is monotone, so that either all $\Omega_{\ep_n}$ are included in $\Omega_\ep$ either they all contain this open set. It both case, $(\nabla v_n)_n$ hence converges to $0$ in $\mathscr{D}'(\Omega_{\ep})$ (at least) : $\nabla v = 0$ and $v$ is constant on $\Omega_{\ep}$. But on the other hand 
\begin{align*}
v\mathds{1}_{\Omega_\ep} - v_n\mathds{1}_{\Omega_{\ep_n}} = (v-v_n)\mathds{1}_{\Omega_{\ep_n}} + v (\mathds{1}_{\Omega_{\ep}}-\mathds{1}_{\Omega_{\ep_n}}) \conv{n}{+\infty}^{\Ll^2(\R^d)} 0,
\end{align*} 
where we used the $\Ll^2(\R^d)$  convergence of $(v_n)_n$ to treat  the first term in the r.h.s. and the dominated convergence Theorem to treat the second one. In particular, if $K$ is some bounded set containing $\Omega_\ep$ and all the open sets $\Omega_{\ep_n}$, $(v_n\mathds{1}_{\Omega_{\ep_n}})_n$ converges to $v\mathds{1}_{\Omega_\ep}$ in $\Ld^2(K)$, so that the $\Ld^2(\Omega_\ep)$ norm of $v$ equals $1$ and the mean-value of $v$ on $\Omega_\ep$ vanishes, but this impossible if $v$ is contant on $\Omega_\ep$. $\qedhere$
\end{proof}

\section*{Acknowledgements}
The research leading to this paper was (partially) funded by the french ``ANR blanche'' project Kibord:  ANR-13-BS01-0004. The author would also like to warmly thank Céline Grandmont and François Murat. Céline Grandmont pointed out numerous difficulties (and mistakes) arising in subection \ref{subsec:nav} and François Murat has always been available for fruitful discussions. Eventually, the author thanks also Boris Andreianov and Clément Cancès who gave him several valuable remarks.

\bibliography{biblio}

\end{document}